\DeclareMathOperator*{\supp}{supp}
\DeclareMathOperator*{\loc}{loc}
\DeclareMathOperator*{\comp}{comp}
\DeclareMathOperator*{\sgn}{sgn}
\newtheorem{theorem}{Theorem}
\numberwithin{prop}{section}
\numberwithin{corol}{section}
\numberwithin{equation}{section}
\newtheorem{lemma}{Lemma}
\numberwithin{lemma}{section}
\numberwithin{conjecture}{section}
\theoremstyle{definition}
\numberwithin{defin}{section}
\numberwithin{figure}{section}
\renewcommand{\Re}{\mathop{\rm Re}\nolimits}
\renewcommand{\Im}{\mathop{\rm Im}\nolimits}
\newcommand{\bl}{\begin{flushleft}}
\newcommand{\el}{\end{flushleft}}
\newcommand{\br}{\begin{flushright}}
\newcommand{\ert}{\end{flushright}}
\newcommand{\bc}{\begin{center}}
\newcommand{\ec}{\end{center}}
\newcommand{\mcal}[1]{\mathcal{#1}}
\newcommand{\complex}{\mathbb{C}}
\newcommand{\numList}{\begin{enumerate}}
\newcommand{\enumList}{\end{enumerate}}
\newcommand{\e}{\epsilon}
\newcommand{\re}{\mathbb{R}}
\newcommand{\la}{\langle}
\newcommand{\ra}{\rangle}
\newcommand{\Dloc}{{\mcal D}_{\loc}}
\newcommand{\hf}{\frac 12}
\newcommand{\Kbar}{\tilde{K}}
\newcommand{\sph}{\mathbb{S}}
\newcommand{\LOmega}{d_\Omega}
\newcommand{\LGamma}{d_\Gamma}
\newcommand{\Lchi}{d_\chi}
\newenvironment{remark}[1][Remarks:]{\begin{trivlist}
\item[\hskip \labelsep {\bfseries #1}]\end{trivlist}\begin{itemize}}{\end{itemize}}
\numberwithin{equation}{section}
\title [Restriction Bounds for the Free Resolvent]{Restriction bounds for the free resolvent and resonances in lossy scattering}
\author[J. Galkowski]{Jeffrey Galkowski}
\address{Mathematics Department, University of California, Berkeley, 
CA 94720, USA}
\email{jeffrey.galkowski@math.berkeley.edu}
\author[H. Smith]{Hart Smith}
\address{Mathematics Department, University of Washington, Seattle, WA 98195, USA}
\email{hart@math.washington.edu}
\begin{document}

\begin{abstract}

We establish high energy $L^2$ estimates for the restriction of the free Green's function to hypersurfaces in $\re^d$. As an application, we estimate the size of a logarithmic resonance free region for scattering by potentials of the form $V\otimes \delta_{\Gamma}$, where $\Gamma\subset\re^d$ is a finite union of compact subsets of embedded hypersurfaces. In odd dimensions we prove a resonance expansion for solutions to the wave equation with such a potential.

\end{abstract}

\maketitle
\section{Introduction} Scattering by potentials is used in math and physics to study waves in many physical systems  (see for example \cite{Burke}, \cite{Lax}, \cite{ZwAMS}, \cite{ZwScat} and the references therein). Examples include acoustics in concert halls and scattering of light by black holes. 
 One case of recent interest is scattering in quantum corrals that are constructed using scanning tunneling microscopes \cite{Heller} \cite{Crommie}. One model for this system is that of a delta function potential on the boundary of a domain $\Omega\subset \re^d$ (see for example \cite{Aligia}, \cite{Heller}, \cite{Crommie}). In this paper, we study scattering by such a delta function potential on hypersurfaces $\Gamma\subset \re^d$.

We assume that $\Gamma\subset \re^d$ is a finite union of compact subsets of embedded $C^{1,1}$ hypersurfaces; that is, it is a union of compact subsets of graphs of $C^{1,1}$ functions. The Bunimovich stadium is an example of a domain in two dimensions which has boundary that is $C^{1,1}$, but not $C^2$.
We let $\delta_{\Gamma}$ denote the surface measure on $\Gamma$, considered as a distribution on $\re^d$, and take
$V$ to be a bounded, self-adjoint operator on $L^2(\Gamma)$. For
$u\in H^1_{\loc}(\re^d)$, we then define 
$(V\otimes\delta_{\Gamma})u:=(Vu|_{\Gamma})\delta_{\Gamma}$.

Resonances are defined as poles of the meromorphic continuation from $\Im \lambda \gg 0$ of the resolvent 
$$
R_V(\lambda)=(-\Delta_{V,\Gamma}-\lambda^2)^{-1}\,,
$$
 where $-\Delta_{V,\Gamma}$ is the unbounded self-adjoint operator
 $$
-\Delta_{V,\Gamma}:=-\Delta +V\otimes \delta_{\Gamma}
$$
(See Section \ref{sec:formalDefinition} for the formal definition of $-\Delta_{V,\Gamma}$). If the dimension $d$ is odd, $R_V(\lambda)$ admits a meromorphic continuation to the entire complex plane, and to the logarithmic covering space of $\complex\setminus \{0\}$ if $d$ is even (see Section \ref{sec:resExpand}). 

The imaginary part of a resonance gives the  decay rate of the associated resonant states. Thus, resonances close to the real axis give information about long term behavior of waves. In particular, since the seminal work of Lax-Phillips \cite{Lax} and Vainberg \cite{Vain}, resonance free regions near the real axis have been used to understand decay of waves.

In this paper, we demonstrate the existence of a resonance free region for delta function potentials on a very general class of $\Gamma$.

\begin{theorem}
\label{thm:resFreeEasy}
Let $\Gamma\subset \re^d$ be a finite union of compact subsets of embedded $C^{1,1}$ hypersurfaces, and suppose $V$ is a self-adjoint operator on $L^2(\Gamma)$.

Then for all $\e>0$ there exists $R>0$ such  that, if $\lambda$ is a resonance for $-\Delta_{V,\Gamma}$, then
\begin{equation}
\label{eqn:resFreeRegion}
\Im \lambda \leq -\Bigl(\tfrac 12\LGamma^{-1}-\e\Bigr)\log (|\!\Re\lambda|)\quad\mathrm{if}\quad |\!\Re \lambda| \geq R\,,
\end{equation}
where $\LGamma$ is the diameter of the convex hull of $\Gamma$.
If $\Gamma$ can be written as a finite union of strictly convex $C^{2,1}$ hypersurfaces, then we can replace $\frac12$ by $\frac{2}{3}$ in \eqref{eqn:resFreeRegion}.
\end{theorem}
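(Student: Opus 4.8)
The plan is to prove this by a Birman--Schwinger reduction together with the $L^2$ restriction bound for the free resolvent. Write $R_0(\lambda)=(-\Delta-\lambda^2)^{-1}$ for the meromorphically continued free resolvent, let $\gamma_\Gamma\colon H^1_{\loc}(\re^d)\to L^2(\Gamma)$ be restriction to $\Gamma$ and $\gamma_\Gamma^*\colon L^2(\Gamma)\to\mathcal D'(\re^d)$, $f\mapsto f\,\delta_\Gamma$, its transpose, so that $V\otimes\delta_\Gamma=\gamma_\Gamma^*\,V\,\gamma_\Gamma$. Set
$$
M(\lambda):=\gamma_\Gamma R_0(\lambda)\gamma_\Gamma^*\ :\ L^2(\Gamma)\to L^2(\Gamma).
$$
From the second resolvent identity one gets, for $\Im\lambda\gg0$ and then by meromorphic continuation, the factorization
$$
R_V(\lambda)=R_0(\lambda)-R_0(\lambda)\gamma_\Gamma^*\,\bigl(I+VM(\lambda)\bigr)^{-1}V\,\gamma_\Gamma R_0(\lambda),
$$
from which $\lambda$ fails to be a resonance as soon as $I+VM(\lambda)$ is invertible on $L^2(\Gamma)$ (this is the substance of the formal set-up of Section~\ref{sec:formalDefinition}; note that $M(\lambda)$ is even well defined as a bounded operator on $L^2(\Gamma)$ only by virtue of the restriction bound below, since $\Gamma$ is merely $C^{1,1}$). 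For $\Im\lambda>0$ and $|\Re\lambda|\ge1$ there is nothing to prove: then $\lambda^2\notin\re\supseteq\Spec(-\Delta_{V,\Gamma})$, so $R_V(\lambda)$ is holomorphic at $\lambda$. Hence it suffices to treat $\Im\lambda\le0$.

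Since $V$ is bounded, $I+VM(\lambda)$ is invertible by a Neumann series whenever $\|M(\lambda)\|_{L^2(\Gamma)\to L^2(\Gamma)}<\|V\|^{-1}$. This is where the restriction estimate for the free Green's function enters: it supplies, for $\Im\lambda\le0$, a bound of the form
$$
\|M(\lambda)\|_{L^2(\Gamma)\to L^2(\Gamma)}\ \le\ C_\Gamma\,\langle\lambda\rangle^{-\alpha}\,\log\langle\lambda\rangle\;e^{\LGamma|\Im\lambda|},\qquad \alpha=\tfrac12,
$$
with $\alpha$ improved to $\tfrac23$ when $\Gamma$ is a finite union of strictly convex $C^{2,1}$ hypersurfaces, the gain coming from curvature in the underlying oscillatory-integral estimates. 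The exponential factor is forced by $|e^{i\lambda|x-y|}|=e^{|\Im\lambda|\,|x-y|}$ together with $|x-y|\le\LGamma$ for $x,y\in\Gamma$; the $\log$ (any subpolynomial loss) will be harmless. In the union case one additionally needs the off-diagonal blocks $\gamma_{\Gamma_j}R_0(\lambda)\gamma_{\Gamma_k}^*$, $j\ne k$, but for pieces at positive distance these obey an at least as good bound by non-degenerate stationary phase in the phase $y\mapsto|x-y|$.

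Now fix $\e>0$ and set $\alpha=\tfrac12$ (resp. $\tfrac23$). Suppose $\Im\lambda\le0$ lies outside the region \eqref{eqn:resFreeRegion}, i.e.
$$
|\Im\lambda|\ \le\ \bigl(\alpha\LGamma^{-1}-\e\bigr)\log|\Re\lambda|,\qquad |\Re\lambda|\ge R,
$$
with $R$ to be chosen. Then $|\Im\lambda|\lesssim\log|\Re\lambda|\ll|\Re\lambda|$, so $\langle\lambda\rangle\asymp|\Re\lambda|$, and $e^{\LGamma|\Im\lambda|}\le|\Re\lambda|^{\,\alpha-\e\LGamma}$, whence
$$
\|M(\lambda)\|_{L^2(\Gamma)\to L^2(\Gamma)}\ \le\ C_\Gamma\,\langle\lambda\rangle^{-\alpha}\log\langle\lambda\rangle\,|\Re\lambda|^{\,\alpha-\e\LGamma}\ \le\ C_\Gamma'\,|\Re\lambda|^{-\e\LGamma}\log|\Re\lambda|\ \longrightarrow\ 0
$$
as $|\Re\lambda|\to\infty$. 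So there is $R=R(\e,\|V\|,\Gamma)$ with $\|M(\lambda)\|_{L^2(\Gamma)\to L^2(\Gamma)}<\|V\|^{-1}$ once $|\Re\lambda|\ge R$; then $I+VM(\lambda)$ is invertible and $\lambda$ is not a resonance. This yields \eqref{eqn:resFreeRegion}, and its strictly convex refinement with $\tfrac12$ replaced by $\tfrac23$.

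The only substantial work is the restriction bound for $M(\lambda)$: proving $\|\gamma_\Gamma R_0(\lambda)\gamma_\Gamma^*\|_{L^2(\Gamma)\to L^2(\Gamma)}\lesssim\langle\lambda\rangle^{-1/2}\log\langle\lambda\rangle$ uniformly down to the real axis when $\Gamma$ is only of class $C^{1,1}$, and the sharper $\langle\lambda\rangle^{-2/3}$ under strict convexity. This is the analytic heart of the paper and is carried out separately; granting it, the deduction above is soft. The main obstacle is therefore entirely contained in that estimate — together with the (comparatively routine) verification that the Birman--Schwinger factorization genuinely captures all resonances after meromorphic continuation, including on the real axis, where the limiting form of the restriction bound with $|\Im\lambda|=0$ is what is used.
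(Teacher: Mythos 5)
Your proposal is correct and is essentially the paper's own argument: your $M(\lambda)$ is exactly the operator $G(\lambda)=\gamma R_0(\lambda)\gamma^*$ of Theorem \ref{thm:optimal}, your factorization is \eqref{eqn:resolventform}, and the deduction of the resonance-free region is the same Neumann-series argument (invertibility of $I+VG(\lambda)$ once $\|G(\lambda)\|<\|V\|^{-1}$, using $\langle\lambda\rangle^{-\alpha}\log\langle\lambda\rangle\,e^{\LGamma(\Im\lambda)_-}$ with $\alpha=\tfrac12$ or $\tfrac23$) carried out in Section \ref{sec:equivalence}, with the key restriction bound deferred to Theorem \ref{thm:optimal} exactly as the paper does.
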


\begin{remark}
\item These bounds on the size of the resonance free region are not generally optimal, for example in the case that $\Gamma=\partial B(0,1)\subset \re^2$. In \cite{Galk}, the first author uses a microlocal analysis of the transmission problem \eqref{eqn:main} to obtain sharp bounds in the case that $\Gamma=\partial\Omega$ is $C^\infty$ with $\Omega$ strictly convex.  
\item In the smooth, strictly convex case, scattering in other types of transmission problems was considered in \cite{Card} and \cite{PopVod}.
\end{remark}

\medskip

Let $R_0(\lambda)$ be the analytic continuation of the outgoing free resolvent $(-\Delta-\lambda^2)^{-1}$, defined initially for $\Im\lambda>0$. 
Theorem \ref{thm:resFreeEasy} follows from bounds on an operator related to the free resolvent. In particular, we study the restriction of $R_0(\lambda)$ to hypersurfaces $\Gamma \subset\re^d$. Let $\gamma$ denote restriction to $\Gamma$, and $\gamma^*$ the inclusion map $f\mapsto f\delta_\Gamma$.
Let $G(\lambda):L^2(\Gamma)\to L^2(\Gamma)$ be obtained by restricting the kernel $G_0(\lambda,x,y)$ of $R_0(\lambda)$ to $\Gamma$,
\begin{equation*}
%\label{eqn:defG}
G(\lambda):=\gamma \, R_0(\lambda) \, \gamma^*\,.
\end{equation*}
Theorem \ref{thm:resFreeEasy} will follow as a consequence of the following theorem,

\begin{theorem}
\label{thm:optimal}
Let $\Gamma\subset\re^d$ be a finite union of compact subsets of embedded $C^{1,1}$ hypersurfaces. Then $G(\lambda)$ is a compact operator on $L^2(\Gamma)$, and
\begin{equation}
\label{eqn:optimalFlat}
\|G(\lambda)\|_{L^2(\Gamma)\to L^2(\Gamma)}\leq C\,\la\lambda\ra^{-\frac 12}\,\log\la\lambda\ra\,e^{\LGamma(\Im \lambda)_-}\,,
\end{equation}
where $\LGamma$ is the diameter of the convex hull of $\Gamma$.
Moreover, if $\Gamma$ is a finite union of compact subsets of strictly convex $C^{2,1}$ hypersurfaces, then 
\begin{equation}
\label{eqn:optimalConvex}
\|G(\lambda)\|_{L^2(\Gamma)\to L^2(\Gamma)}\leq C\,\la\lambda\ra^{-\frac 23}\,\log\la\lambda\ra\,e^{\LGamma(\Im \lambda)_-}\,.
\end{equation}

\end{theorem}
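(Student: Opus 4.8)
The plan is to use the explicit Hankel‑function form of the kernel of $R_0(\lambda)$, reduce to $\lambda$ real and large, and split $G(\lambda)$ into a near‑diagonal piece and a dyadically decomposed oscillatory piece on $\Gamma\times\Gamma$; the heart of the matter will be an $L^2$ bound for an oscillatory integral operator with the rough phase $\lambda|x-y|$ restricted to $\Gamma$. First I would record that $R_0(\lambda)$ has Schwartz kernel $G_0(\lambda,x,y)=c_d\,(\lambda/|x-y|)^{(d-2)/2}H^{(1)}_{(d-2)/2}(\lambda|x-y|)$ and use the asymptotics of the Hankel function to write, for $\lambda|x-y|\le 1$, $|G_0(\lambda,x,y)|\lesssim |x-y|^{2-d}$ (with an extra $\log$ when $d=2$), while for $\lambda|x-y|\ge 1$
\[
G_0(\lambda,x,y)=\lambda^{(d-3)/2}\,|x-y|^{-(d-1)/2}\,e^{i\lambda|x-y|}\,a(\lambda|x-y|),
\]
with $a$ a symbol of order $0$, all uniformly for $\lambda$ in the closed lower half plane once the factor $e^{-\Im\lambda|x-y|}\le e^{(\Im\lambda)_-\LGamma}$ is extracted (using $|x-y|\le\LGamma$ for $x,y\in\Gamma$). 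Together with the elementary bound for $|\lambda|\le 1$, this reduces the theorem to the estimate for $\lambda$ real and $\lambda\ge 1$, with $e^{\LGamma(\Im\lambda)_-}$ and $\la\lambda\ra$ replaced by $1$ and $\lambda$. Compactness of $G(\lambda)$ I would get by exhibiting it as the operator‑norm limit, as $\varepsilon\to 0$, of the Hilbert--Schmidt operators obtained by excising $|x-y|<\varepsilon$; the excised part has norm $O(\varepsilon)$ by Schur's test and the upper regularity bound $\sigma(\Gamma\cap B(x,r))\lesssim r^{d-1}$, which holds because $\Gamma$ is a finite union of compact pieces of $C^{1,1}$ graphs.

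The near‑diagonal piece $|x-y|\le\lambda^{-1}$ is handled directly by Schur's test and the same regularity bound: $\sup_x\int_{\{y\in\Gamma\,:\,|x-y|\le\lambda^{-1}\}}|x-y|^{2-d}\,d\sigma(y)\lesssim\lambda^{-1}$, which is well inside the claimed bound. For the far piece, $\lambda^{-1}\le|x-y|\le\LGamma$, I would decompose dyadically as $\sum_j\lambda^{(d-3)/2}\op\bigl(e^{i\lambda|x-y|}\,2^{-j(d-1)/2}\beta_j\bigr)$ over $\lambda^{-1}\lesssim 2^{-j}\lesssim\LGamma$ — so $O(\log\lambda)$ terms — with $\beta_j$ a normalized bump on $\{|x-y|\sim 2^{-j}\}$ (absorbing $|x-y|^{-(d-1)/2}2^{j(d-1)/2}$ and $a$). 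Rescaling the $j$‑th block to unit scale gives $\|\op(e^{i\lambda|x-y|}\beta_j)\|_{L^2(\Gamma)\to L^2(\Gamma)}=2^{-j(d-1)}\,\|\op(e^{i\mu|x-y|}\beta)\|_{L^2(\tilde\Gamma)\to L^2(\tilde\Gamma)}$, where $\mu=\lambda 2^{-j}\ge 1$, $\beta$ is a fixed bump supported where $|x-y|\sim 1$, and $\tilde\Gamma=2^j\Gamma$ is a hypersurface whose second fundamental form has size $\kappa\sim 2^{-j}$. The key input is then the oscillatory bound
\[
\|\op(e^{i\mu|x-y|}\beta)\|_{L^2(\tilde\Gamma)\to L^2(\tilde\Gamma)}\lesssim \mu^{-(d-2)/2}
\]
for such a (nearly flat) $C^{1,1}$ hypersurface, improved to $\mu^{-(d-2)/2}\la\mu\kappa^2\ra^{-1/6}$ — an Airy‑type gain reflecting the convexity — when $\Gamma$ is strictly convex and $C^{2,1}$. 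Carried back through the rescaling, these give each dyadic block a bound $\lesssim\lambda^{-1/2}$ (respectively $\lesssim\lambda^{-2/3}$), and summing over the $O(\log\lambda)$ blocks yields \eqref{eqn:optimalFlat} (respectively \eqref{eqn:optimalConvex}).

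Proving the oscillatory bound is the crux, and the main obstacle is the low regularity of $\Gamma$, which carries only a Lipschitz unit normal (respectively a $C^{1,1}$ second fundamental form) and therefore does not permit iterated integrations by parts or a full stationary‑phase expansion. I would argue by $TT^*$: writing $T=\op(e^{i\mu|x-y|}\beta)$, the operator $TT^*$ has kernel $\int_\Gamma e^{i\mu(|x-z|-|y-z|)}\beta(x,z)\overline{\beta(y,z)}\,d\sigma(z)$, and the $z$‑integral is governed by the critical set $\nabla^{\mathrm{tan}}_z\bigl(|x-z|-|y-z|\bigr)=0$, i.e.\ the portion of $\Gamma$ colinear with $x$ and $y$. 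This is a clean one‑dimensional critical manifold whose transverse Hessian has rank $d-2$, so one application of stationary phase along it produces a $TT^*$ kernel of the form $\mu^{-(d-2)/2}|x-y|^{-(d-2)/2}e^{\pm i\mu|x-y|}$ times a symbol supported in $|x-y|\lesssim 1$. One must then bound \emph{this} operator — itself an oscillatory integral operator — rather than merely invoke Schur's test, which discards the oscillation in $(x,y)$ and loses a square root; by Plancherel it is again $O(\mu^{-(d-2)/2})$, and composing gives $\|T\|^2=\|TT^*\|\lesssim\mu^{-(d-2)}$. When $\Gamma$ is strictly convex, the chord from $x$ to $y$ is never tangent to $\Gamma$ and the surface curves away from it, so the behaviour transverse to the critical ray is a nondegenerate fold; the relevant line integral becomes an Airy integral, supplying the extra factor $\la\mu\kappa^2\ra^{-1/6}$. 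The substantive technical work is thus making one step of stationary phase and the fold normal form genuinely available under only $C^{1,1}$ (respectively $C^{2,1}$) control, verifying the fold nondegeneracy from strict convexity, and tracking the constants' dependence on $\Gamma$; the remaining points — the dyadic summation, the passage between charts, and the contribution of pairs lying in distinct components of $\Gamma$ — are routine.
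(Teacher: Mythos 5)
Your architecture (dyadic decomposition of the kernel on $\Gamma\times\Gamma$, rescaling each block to a unit-scale oscillatory operator, $TT^*$) is genuinely different from the paper's, which never touches an oscillatory integral on the rough surface: it passes to Fourier space, writing $Q_\lambda(f,g)=\int\bigl(|\xi|^2-\lambda^2\bigr)^{-1}\widehat{f\delta_\Gamma}(\xi)\,\overline{\widehat{g\delta_\Gamma}}(\xi)\,d\xi$, reduces the theorem to the restriction bound $\int|\widehat{f\delta_\Gamma}|^2\,\delta(|\xi|-r)\lesssim \la r\ra^{\alpha}\|f\|^2_{L^2(\Gamma)}$ (with $\alpha=\frac12$, resp.\ $\frac13$), and proves that via a square-function estimate for $\cos(t\sqrt{-\Delta})$ restricted to the graph, mollifying $F$ at scale $r^{-1/2}$ and running a Cotlar--Stein almost-orthogonality argument that uses only non-stationary-phase integration by parts (the convex $C^{2,1}$ case is quoted from Blair). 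The reason for that detour is exactly the point where your proposal has its main gap: the unit-scale bound $\|{\rm Op}(e^{i\mu|x-y|}\beta)\|_{L^2(\tilde\Gamma)\to L^2(\tilde\Gamma)}\lesssim\mu^{-(d-2)/2}$, with the Airy-type gain in the convex case, is asserted but not proved, and the sketch offered would fail at the stated regularity. The phase $z\mapsto|x-z|-|y-z|$ pulled back by a $C^{1,1}$ graph is only $C^{1,1}$ in $z$, its Hessian is merely $L^\infty$, so ``one application of stationary phase along the critical manifold'' is not available, and the fold/Airy normal form needs more than the $C^{2,1}$ hypothesis hands you pointwise; moreover the critical set is not the locus colinear with $x,y$ but the set where $\frac{z-x}{|z-x|}-\frac{z-y}{|z-y|}$ is normal to $\Gamma$. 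Finally, the step ``by Plancherel it is again $O(\mu^{-(d-2)/2})$'' treats the post-stationary-phase $TT^*$ kernel as a convolution; on a curved, rough $\Gamma$ it is an oscillatory operator of exactly the same kind as $T$ itself, so the argument is circular precisely where the work lies. You flag this as ``the substantive technical work,'' but that work \emph{is} the theorem.

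There is a second gap in the reduction to $\lambda$ real: extracting $e^{-\Im\lambda|x-y|}\le e^{\LGamma(\Im\lambda)_-}$ is a pointwise bound on the kernel modulus, and everything beyond the near-diagonal piece relies on cancellation, which does not survive pointwise domination. The weight $e^{-\Im\lambda|x-y|}$ does not factor as $w_1(x)w_2(y)$, so it cannot be pulled out of an operator-norm estimate, and when $|\Im\lambda|$ is comparable to $|\lambda|$ it cannot be absorbed as a mild amplitude either, since its gradient is of size $|\Im\lambda|\,e^{|\Im\lambda||x-y|}$, comparable to the phase gradient. Absolute-value (Schur) bounds alone give only $|\lambda|^{(d-3)/2}e^{\LGamma(\Im\lambda)_-}$, which is weaker than the claim for $d\ge3$. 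The paper instead proves the estimate for $\lambda\in\re$ (and $\Im\lambda\ge0$) and then obtains the whole lower half plane by applying the Phragm\'en--Lindel\"of theorem to $e^{-i\LGamma\lambda}\lambda^{a}(\log\lambda)^{-b}Q_\lambda(f,g)$, using the polynomial growth supplied by the kernel bounds; you would need either that argument or a genuinely uniform treatment of complex $\lambda$ inside your dyadic scheme. Your compactness argument and the near-diagonal Schur estimate are fine.
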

Here we set $\la\lambda\ra=(2+|\lambda|^2)^{\frac 12}$, and $(\Im\lambda)_-=\max(0,-\Im\lambda)\,.$
Compactness follows easily by Rellich's embedding theorem, or the bounds on $G_0(\lambda,x,y)$ in Section \ref{sec:boundGreenFunc}.
The powers $\frac 12$ and $\frac 23$ in \eqref{eqn:optimalFlat} and \eqref{eqn:optimalConvex}, respectively, are in general optimal. This follows from the fact that the corresponding estimates for the restriction of eigenfunctions in Section \ref{sec:upperhalfplane} are the best possible. However, it is likely that the factor of $\log\la\lambda\ra$ is not needed. In Section \ref{sec:twodimensions} we prove estimate \eqref{eqn:optimalFlat} in dimension two without it. Also, for the flat case in general dimensions, the estimate \eqref{eqn:optimalFlat} holds without it. We also expect that estimate \eqref{eqn:optimalConvex} holds for $C^{1,1}$ strictly convex hypersurfaces, but do not pursue that here.

In the case that $\Im\lambda\ge|\lambda|^{\frac 12}$, respectively $\Im\lambda\ge|\lambda|^{\frac 23}$,
the above bounds can be improved upon.
\begin{theorem}
\label{thm:optimalLargeIm}
Let $\Gamma\subset\re^d$ be a finite union of compact subsets of embedded $C^{1,1}$ hypersurfaces. Then for $\Im\lambda>0$,
\begin{equation*}
%\label{eqn:optimalLargeIm}
\|G(\lambda)\|_{L^2(\Gamma)\to L^2(\Gamma)}\leq C\,\la\Im\lambda\ra^{-1}\,.
\end{equation*}
\end{theorem}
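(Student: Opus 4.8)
The plan is to split according to the size of $\Im\lambda$ and the relative sizes of $|\Re\lambda|$ and $\Im\lambda$; the case $|\Re\lambda|>\Im\lambda$ will be the crux. Two preliminary observations: by the definition of $G(\lambda)$ it has integral kernel $G_0(\lambda,x,y)|_{\Gamma\times\Gamma}$, which is locally integrable on $\Gamma$ since its diagonal singularity $|x-y|^{-(d-2)}$ (logarithmic when $d=2$) is integrable over the $(d-1)$-dimensional surface; and $\langle\Im\lambda\rangle^{-1}\asymp\min\{1,(\Im\lambda)^{-1}\}$. When $\Im\lambda\le 1$ one only needs $\|G(\lambda)\|\lesssim 1$, which is immediate from Theorem~\ref{thm:optimal}: there $(\Im\lambda)_-=0$, so it gives $\|G(\lambda)\|\lesssim\langle\lambda\rangle^{-1/2}\log\langle\lambda\rangle\lesssim1$ (alternatively, for $|\lambda|$ bounded, combine the pointwise bounds of Section~\ref{sec:boundGreenFunc} with Schur's test). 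So from now on I assume $\Im\lambda>1$, hence also $|\lambda|>1$, and aim for $\|G(\lambda)\|\lesssim(\Im\lambda)^{-1}$.

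If $|\Re\lambda|\le\Im\lambda$, so that $|\lambda|\le\sqrt{2}\,\Im\lambda$, I would apply Schur's test directly. Using the bound $|G_0(\lambda,x,y)|\lesssim e^{-(\Im\lambda)|x-y|}|x-y|^{-(d-2)}\langle\lambda|x-y|\rangle^{(d-3)/2}$ from Section~\ref{sec:boundGreenFunc} together with $d\sigma(y)\lesssim r^{\,d-2}\,dr$, split the $y$-integral over $\Gamma$ at $|x-y|=|\lambda|^{-1}$: the near part contributes $\lesssim|\lambda|^{-1}$, while the far part contributes $\lesssim|\lambda|^{(d-3)/2}\int_0^{\infty} r^{(d-3)/2}e^{-(\Im\lambda)r}\,dr\lesssim(|\lambda|/\Im\lambda)^{(d-3)/2}(\Im\lambda)^{-1}$. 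Since $|\lambda|\asymp\Im\lambda$, both are $\lesssim(\Im\lambda)^{-1}$, so $\sup_{x\in\Gamma}\int_\Gamma|G_0(\lambda,x,y)|\,d\sigma(y)\lesssim(\Im\lambda)^{-1}$, and (the kernel being symmetric in modulus) Schur's test gives $\|G(\lambda)\|\lesssim(\Im\lambda)^{-1}$.

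The main case — and the step I expect to be the obstacle — is $|\Re\lambda|>\Im\lambda$, where $\Re\lambda$ may be arbitrarily large compared with $\Im\lambda$ and the crude Schur bound loses powers of $\Re\lambda/\Im\lambda$; the oscillatory cancellation that is missing I would replace by a divergence identity. Using the symmetry $\lambda\mapsto-\bar\lambda$, assume $\Re\lambda>0$. For $g\in L^2(\Gamma)$ set $u=R_0(\lambda)(g\delta_\Gamma)\in H^{3/2-}(\re^d)$, so that $(-\Delta-\lambda^2)u=g\delta_\Gamma$, $u$ decays exponentially (as $\Im\lambda>0$), and $G(\lambda)g=u|_\Gamma$. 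Pairing the equation with $u$ over $\re^d$ and integrating by parts (no boundary term at infinity, by the decay) gives
\begin{equation*}
\langle G(\lambda)g,g\rangle_{L^2(\Gamma)}=\|\nabla u\|_{L^2(\re^d)}^2-\overline{\lambda^2}\,\|u\|_{L^2(\re^d)}^2 .
\end{equation*}
Its imaginary and real parts read $\Im(\lambda^2)\,\|u\|^2=\Im\langle G(\lambda)g,g\rangle$ and $\|\nabla u\|^2=\Re\langle G(\lambda)g,g\rangle+\Re(\lambda^2)\|u\|^2$, whence $\|u\|^2\le|\Im\lambda^2|^{-1}\|G(\lambda)\|\,\|g\|^2$ and $\|\nabla u\|^2\le\bigl(1+|\Re\lambda^2|/|\Im\lambda^2|\bigr)\|G(\lambda)\|\,\|g\|^2$.

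Finally, feed these into the trace inequality $\|\gamma u\|_{L^2(\Gamma)}^2\lesssim\|u\|_{L^2(\re^d)}^2+\|u\|_{L^2(\re^d)}\|\nabla u\|_{L^2(\re^d)}$ (valid since $\Gamma$ is a compact $C^{1,1}$ hypersurface and $u\in H^1$), take the supremum over $\|g\|_{L^2(\Gamma)}=1$, and divide by the finite positive number $\|G(\lambda)\|$, to get
\begin{equation*}
\|G(\lambda)\|\lesssim|\Im\lambda^2|^{-1}+|\Im\lambda^2|^{-1/2}\bigl(1+|\Re\lambda^2|/|\Im\lambda^2|\bigr)^{1/2}.
\end{equation*}
When $\Re\lambda>\Im\lambda>1$ one has $|\Im\lambda^2|=2\,\Re\lambda\,\Im\lambda\ge2(\Im\lambda)^2$ and $|\Re\lambda^2|/|\Im\lambda^2|\le\Re\lambda/(2\Im\lambda)$, so the right-hand side is $\lesssim(\Re\lambda\,\Im\lambda)^{-1}+(\Re\lambda\,\Im\lambda)^{-1/2}(\Re\lambda/\Im\lambda)^{1/2}\lesssim(\Im\lambda)^{-1}$, which closes all cases. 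The key point is this double use of the divergence identity — once to express $\langle G(\lambda)g,g\rangle$ through $\|u\|$ and $\|\nabla u\|$, and once, via its real and imaginary parts, to bound those back in terms of $\|G(\lambda)\|$ — which lets one dispense with oscillatory analysis in the regime $\Re\lambda\gg\Im\lambda$; the ancillary facts (the pointwise bounds on $G_0$, the trace inequality on a $C^{1,1}$ hypersurface, the exponential decay and $H^1$-membership of $u$, and finiteness of $\|G(\lambda)\|$) are routine.
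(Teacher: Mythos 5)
Your argument is correct, but it is genuinely different from the paper's. The paper proves Theorem~\ref{thm:optimalLargeIm} on the Fourier side: it reuses the Plancherel representation \eqref{eqn:restrictedDualPlancherel} of $Q_\lambda(f,g)$, notes that for a graph one has the slab bound $\sup_{\xi_n}\int|\widehat{f\delta_\Gamma}(\xi',\xi_n)|^2\,d\xi'\le C\|f\|_{L^2(\Gamma)}^2$, and then splits the $\xi$-integral at $|\xi_n|=2|\lambda|$, using $\bigl||\xi|^2-\lambda^2\bigr|\ge|\lambda|\,|\Im\lambda|$ on the inner region and $\bigl||\xi|^2-\lambda^2\bigr|\gtrsim|\xi_n|^2$ on the outer one; this gives $\la\Im\lambda\ra^{-1}$ uniformly in $\Re\lambda$ once $\Im\lambda\ge 1$, with a partition of unity and Cauchy--Schwarz reducing to the graph case. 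You instead work in physical space with a self-improving energy argument: pairing $(-\Delta-\lambda^2)u=g\delta_\Gamma$ with $u=R_0(\lambda)(g\delta_\Gamma)$, reading off $\|u\|_{L^2}$ and $\|\nabla u\|_{L^2}$ from the imaginary and real parts in terms of $\|G(\lambda)\|$, and closing via the trace inequality $\|u\|_{L^2(\Gamma)}^2\lesssim\|u\|_{L^2}\|u\|_{H^1}$ (the same inequality the paper records in Section~\ref{sec:formalDefinition}) and division by $\|G(\lambda)\|$; Schur's test handles $|\Re\lambda|\le\Im\lambda$, and Theorem~\ref{thm:optimal} (or a crude bound) handles $\Im\lambda\le 1$. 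Your route is more elementary and avoids any Fourier restriction input in the main regime $|\Re\lambda|>\Im\lambda>1$, and it adapts easily to other compactly supported measures satisfying the same trace inequality; its costs are the three-way case split and the reliance on Theorem~\ref{thm:optimal} when $\Im\lambda\le1$ and $|\Re\lambda|$ is large (legitimate, since that theorem's upper-half-plane proof is independent of this one), whereas the paper's computation is a single short estimate uniform in $\Re\lambda$. One cosmetic point: for $d=2$ the unified kernel bound you quote is not literally valid near the diagonal (the singularity is logarithmic, as noted in Section~\ref{sec:boundGreenFunc}), but the near-diagonal contribution is still $O(|\lambda|^{-1})$, or one can use the paper's global $|\lambda|^{-1/2}|x-y|^{-1/2}$ bound, so nothing in your Schur-test case changes.
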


We next use the results above to analyze the long term behavior of waves scattered by the potential $V\otimes \delta_{\Gamma}$. Theorem \ref{thm:resFreeEasy} implies in particular that there are only a finite number of resonances in the set
$\Im\lambda> -A\,,$ for any $A<\infty$.
We give a resonance expansion for the wave equation
\begin{equation}
\label{eqn:waveProblem}
\bigl(\partial_t^2-\Delta +V\otimes \delta_{ \Gamma}\bigl)u=0\,,\quad u(0,x)=0,\quad \partial_tu(0,x)=g\in L^2_{\comp}\,,
\end{equation}
with wave propagator $U(t)$ defined using the functional calculus for $-\Delta_{V,\Gamma}$. Let $m_R(\lambda)$ be the multiplicity of the pole of $R_V(\lambda)$ at $\lambda$, that is the dimension of the set of resonant states with resonance $\lambda$, and let $\mcal{D}_N$ be the domain of $(-\Delta_{V,\Gamma})^N$. 

\begin{theorem}
\label{thm:resExpand}
Let $d>0$ be odd, and assume that $\Gamma\subset \re^d$ is a finite union of compact subsets of embedded $C^{1,1}$ hypersurfaces, and that $V$ is a self-adjoint operator on $L^2(\Gamma)$.

Let $0> -\mu_1^2>\cdots> -\mu_N^2$ and $0 < \nu_1^2< \cdots < \nu_M^2$ be the nonzero eigenvalues of $-\Delta_{V,\Gamma}$, and $\{\lambda_j\}$ the resonances with $\Im \lambda < 0$.
Then for any 
$A>0$ and $g\in L^2_{\comp}\,,$ the solution $U(t)g$ to \eqref{eqn:waveProblem} admits an expansion
\begin{multline}
\label{eqn:resExpand}
U(t) g=\sum_{k=1}^N \; (2\mu_k)^{-1}e^{t\mu_k}\Pi_{\mu_k}g
+t \,\Pi_0 g+\mcal P_0g+\sum_{k=1}^M (2\nu_k)^{-1}\sin(t\nu_k)\Pi_{\nu_k}g
\\
+\sum_{\Im \lambda_j>-A}\sum_{k=0}^{m_R(\lambda_j)-1}e^{-it\lambda_j}\,t^k\, \mcal P_{\lambda_j,k}g +E_A(t)g\,,
\end{multline}
where $\Pi_{\mu_k}$ and $\Pi_{\nu_k}$ respectively denote the projections onto the $-\mu_k^2$ and $\nu_k^2$ eigenspaces. The maps $\mcal P_{\lambda_j,k}$ are bounded
from $L^2_{\comp}\rightarrow \Dloc$, and
$\mcal P_0$ is a symmetric map to the $0$-resonances.

The operator $E_A(t)\,:\,L^2_{\comp}\rightarrow L^2_{\loc}$ has the following property: for any
$\chi\in C_c^\infty(\re^d)$ equal to $1$ on a neighborhood of $\Gamma$, and any $N\ge 0$, there exists $T_{A,\chi,N}<\infty$ so that
$$
\|\chi E_A(t)\chi\|_{L^2\to \mcal D_N}\leq C_{A,\chi,N}\,e^{-At}\,, \qquad t>T_{A,\chi,N}\,.
$$
\end{theorem}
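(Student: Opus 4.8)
The plan is to derive the resonance expansion from the meromorphic structure of the cutoff resolvent $\chi R_V(\lambda)\chi$ together with the contour–shifting argument that is standard since Lax--Phillips and Vainberg. First I would write the wave propagator via the spectral theorem as a contour integral: for $g\in L^2_{\comp}$ and $t>0$,
\begin{equation*}
U(t)g=\frac{1}{2\pi i}\int_{\Im\lambda=c}e^{-it\lambda}\,\lambda^{-1}\bigl(R_V(\lambda)-R_V(-\lambda)\bigr)g\,\lambda\,d\lambda\,,
\end{equation*}
for $c>0$ large, understood in the sense of a limit of truncated integrals; the key point is that for odd $d$ the integrand extends meromorphically to all of $\complex$ by the discussion in Section~\ref{sec:resExpand}, and by Theorem~\ref{thm:resFreeEasy} there are only finitely many poles with $\Im\lambda>-A$. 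I would then shift the contour from $\{\Im\lambda=c\}$ down to $\{\Im\lambda=-A\}$, picking up residues at the poles in between. The poles on the positive imaginary axis correspond to the negative eigenvalues $-\mu_k^2$ of $-\Delta_{V,\Gamma}$ (giving the $e^{t\mu_k}$ terms), the pole at $\lambda=0$ contributes the $t\,\Pi_0 g+\mcal P_0 g$ term after a Laurent expansion of $R_V$ near $0$, the poles on the real axis at $\pm\nu_k$ combine to give $(2\nu_k)^{-1}\sin(t\nu_k)\Pi_{\nu_k}g$, and each resonance $\lambda_j$ with $-A<\Im\lambda_j<0$ contributes, via its principal part, a term $\sum_{k=0}^{m_R(\lambda_j)-1}e^{-it\lambda_j}t^k\mcal P_{\lambda_j,k}g$ with $\mcal P_{\lambda_j,k}:L^2_{\comp}\to\Dloc$ bounded (this uses that resonant states lie in $\Dloc$).

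The remainder $E_A(t)g$ is then the integral along $\{\Im\lambda=-A\}$, and the crux is to show it decays like $e^{-At}$ after cutoff, with gain in the $\mcal D_N$ norm. For this I would need a polynomial-in-$\lambda$ bound on $\|\chi R_V(\lambda)\chi\|$ along the line $\Im\lambda=-A$ (and on nearby horizontal strips, to justify the contour shift and absolute convergence). This is where Theorems~\ref{thm:optimal} and~\ref{thm:optimalLargeIm} enter: writing the Birman--Schwinger-type identity
\begin{equation*}
R_V(\lambda)=R_0(\lambda)-R_0(\lambda)\gamma^*\bigl(I+V G(\lambda)\bigr)^{-1}V\gamma R_0(\lambda)\,,
\end{equation*}
the factor $(I+VG(\lambda))^{-1}$ is controlled because, by Theorem~\ref{thm:optimal}, $\|VG(\lambda)\|_{L^2(\Gamma)\to L^2(\Gamma)}\to 0$ as $|\lambda|\to\infty$ in the region $\Im\lambda\ge-(\tfrac12\LGamma^{-1}-\e)\log|\Re\lambda|$, so on the line $\Im\lambda=-A$ with $|\Re\lambda|$ large the Neumann series converges and gives an $O(1)$ bound, while the free resolvent factors $R_0(\lambda)$ and the trace/extension operators $\gamma,\gamma^*$ contribute at most polynomial growth between compactly supported cutoffs (standard bounds on $R_0$, e.g. from Section~\ref{sec:boundGreenFunc}). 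Away from $|\Re\lambda|$ large one uses that there are only finitely many poles and compactness to get a bound on the compact part of the line. Combining, $\|\chi R_V(\lambda)\chi\|_{L^2\to L^2}\le C\la\lambda\ra^{K}$ on $\Im\lambda=-A$ for some $K$; elliptic regularity for $-\Delta_{V,\Gamma}$ upgrades this to a bound into $\mcal D_N$ at the cost of more powers of $\la\lambda\ra$.

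To extract the $e^{-At}$ decay from such a polynomially bounded integrand one cannot integrate directly; the standard device is to integrate by parts in $\lambda$ using $e^{-it\lambda}=(it)^{-1}\partial_\lambda e^{-it\lambda}$ enough times, or equivalently to apply the bound to $(-\Delta_{V,\Gamma})^{-N'}U(t)g$ and use that $U(t)$ commutes with the functional calculus, so that each additional power of $(-\Delta_{V,\Gamma})^{-1}=$ (roughly) $\lambda^{-2}$ in the integrand buys two powers of $\la\lambda\ra$ decay without changing the exponential factor $e^{-it\lambda}$; taking $N'$ large enough relative to $K$ makes the line integral absolutely convergent, yielding $\|\chi E_A(t)\chi\|_{L^2\to\mcal D_N}\le C_{A,\chi,N}e^{-At}$ for $t$ past the time $T_{A,\chi,N}$ at which the integration-by-parts boundary terms (which involve the truncation of the contour and a finite-propagation-speed argument on $\supp\chi$) have died away. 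The main obstacle is precisely this quantitative resolvent estimate on $\{\Im\lambda=-A\}$: one must check that the polynomial bound really is uniform along the whole line — i.e. that the finitely many poles have been fully accounted for and that the transition between the "large $|\Re\lambda|$" Neumann-series regime and the "bounded" regime is handled — and that the extra powers of $\la\lambda\ra$ incurred in passing to the $\mcal D_N$ norm are still beaten by finitely many applications of $(-\Delta_{V,\Gamma})^{-1}$; everything else is bookkeeping of residues and a routine contour deformation.
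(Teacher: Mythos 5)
Your overall architecture (spectral-theorem contour representation, deformation past finitely many poles using Theorem \ref{thm:resFreeEasy}, residue bookkeeping at $i\mu_k$, $0$, $\pm\nu_k$, and $\lambda_j$) matches the paper, but there is a genuine gap in the treatment of the remainder: you deform to the horizontal line $\{\Im\lambda=-A\}$, and on that line $|e^{-it\lambda}|=e^{-At}$ is \emph{constant} in $\Re\lambda$, so all convergence must come from decay of the cutoff resolvent in $\lambda$. But the available bounds (Lemma \ref{lem:resolventEstimate} and its $\mcal D_N$ analogue) give only $\|\chi R_V(\lambda)\chi\|_{L^2\to L^2}\lesssim\la\lambda\ra^{-1}$ (marginally divergent under $\int d\Re\lambda$) and $\|\chi R_V(\lambda)\chi\|_{L^2\to\mcal D_N}\lesssim\la\lambda\ra^{2N-1}$, which grows. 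Neither of your proposed rescues closes this: integrating by parts via $e^{-it\lambda}=(it)^{-1}\partial_\lambda e^{-it\lambda}$ gains powers of $t^{-1}$ but, by Cauchy estimates, $\partial_\lambda^k(\chi R_V(\lambda)\chi)$ has the \emph{same} polynomial size in $\la\lambda\ra$ as $\chi R_V(\lambda)\chi$, so the $\lambda$-integral is no more convergent than before; and inserting $(-\Delta_{V,\Gamma})^{-N'}$ changes the quantity being expanded (you would obtain the expansion for a smoothed version of $U(t)g$, and undoing the smoothing restores the growth), besides the facts that $0$ may be an eigenvalue and that $(-\Delta_{V,\Gamma})^{-N'}$ does not commute with the cutoffs $\chi$.

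The missing idea is the choice of contour. The paper deforms not to a horizontal line but to the logarithmic curve $\Sigma_A=\{\Im\lambda=-A-c\log(2+|\Re\lambda|)\}$ with $c<\tfrac12\LGamma^{-1}$, which still lies in the region where Lemma \ref{lem:resolventEstimate} applies. Along $\Sigma_A$ one has $|e^{-it\lambda}|\le e^{-At}(2+|\Re\lambda|)^{-ct}$, so for $t$ large enough (depending on $N$ and $\Lchi$, e.g. $c(t-2\Lchi)\ge 2N+1$) the factor $(2+|\Re\lambda|)^{-ct}$ beats both the $\la\lambda\ra^{2N-1}$ growth of the $\mcal D_N$ resolvent bound and the extra $(2+|\Re\lambda|)^{2\Lchi c}$ coming from $e^{2\Lchi(\Im\lambda)_-}$, yielding absolute convergence and the bound $C_{A,\chi,N}e^{-At}$; this is precisely why the statement carries a threshold time $T_{A,\chi,N}$. (Residues between $\Im\lambda=-A$ and $\Sigma_A$ are absorbed into $E_A(t)$.) Two further points you would still need: the initial contour integral and the vertical segments $\{\pm R+iv\}$ are only controlled after assuming $g\in H^s$, $s>0$, and concluding by density (the paper uses the bound \eqref{eqn:L2wbound} with $s>0$ for this); and for $N\ge 2$ the paper estimates $\chi\bigl(R_V(\lambda)-R_V(-\lambda)\bigr)\chi$ rather than $\chi R_V(\lambda)\chi$ alone, the subtracted term contributing only eigenvalue projections, which is how the higher-order $\mcal D_N$ bounds are obtained.
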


Under the assumption that $\Gamma=\partial\Omega$ for a bounded open domain $\Omega\subset\re^d$, and that $V$ and $\partial\Omega$ satisfy higher regularity assumptions, we obtain estimates for $\chi E_A(t)\chi g$ in the spaces 
$$
\mcal E_N:= H^1(\re^d)\cap (H^N(\Omega)\oplus H^N(\re^d\setminus\overline{\Omega}))\,,\qquad N\ge 1\,.
$$
If $\partial\Omega$ is of $C^{1,1}$ regularity, and $V$ is bounded on $H^{\frac 12}(\partial\Omega)$, then we show $\mcal D_1=\mcal D\subset\mcal E_2$, and convergence in $\mcal E_2$ follows from Theorem \ref{thm:resExpand}. For smooth boundaries we show the following.

\begin{theorem}\label{thm:higherorder}
Suppose that $\Gamma=\partial\Omega$ is $C^\infty$ and that $V$ is bounded on $H^s(\partial\Omega)$ for all $s$. Then the operator $E_A(t)$ defined in \eqref{eqn:resExpand} has the following property: for any
$\chi\in C_c^\infty(\re^d)$ equal to $1$ on a neighborhood of $\overline\Omega$, and integer $N\ge 1$, there exists $T_{A,\chi,N}<\infty$ so that
$$
\|\chi E_A(t)\chi\|_{L^2\to \mcal E_N}\leq C_{A,\chi,N}\,e^{-At}\,, \qquad t>T_{A,\chi,N}\,.
$$
\end{theorem}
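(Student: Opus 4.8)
The plan is to derive Theorem~\ref{thm:higherorder} from Theorem~\ref{thm:resExpand} together with an elliptic regularity statement which, under the stronger hypotheses here, identifies the operator domains $\mcal D_M$ near $\Gamma$ with the transmission spaces $\mcal E_{2M}$. From the proof of Theorem~\ref{thm:resExpand} we already know that $\chi E_A(t)\chi g$ is supported in $\supp\chi$, lies in $\mcal D_M$ for every $M$, and obeys $\|\chi E_A(t)\chi\|_{L^2\to\mcal D_M}\le C_{A,\chi,M}e^{-At}$ for $t>T_{A,\chi,M}$. It therefore suffices to prove the following: \emph{if $\Gamma=\partial\Omega$ is $C^\infty$ and $V$ is bounded on $H^s(\Gamma)$ for every $s$, then for each $M\ge1$ and each compact $K$ one has $\|u\|_{\mcal E_{2M}}\le C_{K,M}\|u\|_{\mcal D_M}$ for every $u\in\mcal D_M$ supported in $K$.} Granting this, take $M=\lceil N/2\rceil$, note $\mcal E_{2M}\subset\mcal E_N$, and compose with the bound of Theorem~\ref{thm:resExpand}, with $T_{A,\chi,N}:=T_{A,\chi,M}$. (One could instead reopen the contour integral defining $E_A(t)$ and integrate by parts in $\lambda$ to exploit the smoothing of the factors $R_0(\lambda)$, but the route through $\mcal D_M$ is cleaner.)

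I would establish the embedding by induction on $M$. The base case $\mcal D_1\subset\mcal E_2$ is the fact already recorded above, valid even for $C^{1,1}$ boundary when $V$ is bounded on $H^{1/2}(\Gamma)$. For the inductive step fix $\lambda_0$ with $\Im\lambda_0>0$, so that $\lambda_0^2\notin\Spec(-\Delta_{V,\Gamma})$ and $R_V(\lambda_0)=(-\Delta_{V,\Gamma}-\lambda_0^2)^{-1}$, and moreover $I+VG(\lambda_0)$ is invertible on $L^2(\Gamma)$ since there are no resonances in the upper half-plane. If $u\in\mcal D_M$ is supported in $K$, then $v:=(-\Delta_{V,\Gamma}-\lambda_0^2)u\in\mcal D_{M-1}$ is again of compact support (recall $\Gamma$ is compact), so by induction $v\in\mcal E_{2M-2}$; that is, $v$ is $H^{2M-2}$ on each of $\Omega$ and $\re^d\setminus\overline\Omega$, and $u=R_V(\lambda_0)v$ is the unique decaying solution of the transmission problem $(-\Delta-\lambda_0^2)u=v$ off $\Gamma$, $u$ continuous across $\Gamma$, $\partial_\nu u|_+-\partial_\nu u|_-=V(u|_\Gamma)$.

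To solve this I would write $u=w+R_0(\lambda_0)\gamma^*h$, where $w$ patches together the interior and exterior solutions of $(-\Delta-\lambda_0^2)w_\pm=v$ with zero Dirichlet data; since $\Im\lambda_0>0$ the exterior Dirichlet problem is uniquely solvable, and interior and boundary elliptic estimates --- being local, they apply on the noncompact exterior after a cutoff --- give $w\in\mcal E_{2M}$ with $[\partial_\nu w]\in H^{2M-3/2}(\Gamma)$. The single-layer potential $R_0(\lambda_0)\gamma^*h$ is continuous across $\Gamma$, solves $(-\Delta-\lambda_0^2)(\cdot)=0$ off $\Gamma$, and has conormal jump $-h$, so the transmission condition is equivalent to $(I+VG(\lambda_0))h=[\partial_\nu w]\in H^{2M-3/2}(\Gamma)$, where $G(\lambda_0)=\gamma R_0(\lambda_0)\gamma^*$. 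Because $\Gamma$ is $C^\infty$, $G(\lambda_0)$ is a pseudodifferential operator of order $-1$, hence maps $H^s(\Gamma)\to H^{s+1}(\Gamma)$ for every $s$; combined with $V$ bounded on each $H^s(\Gamma)$ and the $L^2$-invertibility of $I+VG(\lambda_0)$, a successive-approximation argument ($h=[\partial_\nu w]-VG(\lambda_0)h$, iterated) promotes $(I+VG(\lambda_0))^{-1}$ to a bounded operator on $H^s(\Gamma)$ for every $s$. Thus $h\in H^{2M-3/2}(\Gamma)$, and since the single layer of a density in $H^{2M-3/2}(\Gamma)$ is $H^{2M}$ on each side of $\Gamma$ while $w\in\mcal E_{2M}$, we get $u\in\mcal E_{2M}$ directly; tracking constants through the elliptic estimates and the iteration gives the stated bound, closing the induction.

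The main obstacle is the promotion of $(I+VG(\lambda_0))^{-1}$ from $L^2(\Gamma)$ to every $H^s(\Gamma)$: because $V$ is assumed only bounded on the Sobolev scale and is not pseudodifferential, one cannot parametrix-invert $I+VG(\lambda_0)$ directly, and the argument must run entirely off the order $-1$ smoothing of $G(\lambda_0)$, which uses the $C^\infty$ regularity of $\Gamma$ in an essential way. A secondary, purely technical, point is the bookkeeping on the unbounded exterior region, handled by working at $\Im\lambda_0>0$ and localizing with $\chi$; all the elliptic estimates used are interior or boundary estimates and are insensitive to behavior at infinity.
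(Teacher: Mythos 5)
Your argument is essentially correct, but it is a genuinely different route from the paper's. The paper does not pass through the domains $\mcal D_M$ at all: it proves Lemma \ref{lem:higherorder}, a quantitative bound $\|\chi(R_V(\lambda)-R_V(-\lambda))\chi\|_{L^2\to\mcal E_N}\lesssim \la\lambda\ra^{N-1}e^{2\Lchi(\Im\lambda)_-}$ obtained by induction on $N$ from the smooth transmission property (Lemma \ref{lem:smoothtransmission}), the point being that the difference of resolvents kills the $L^2$ inhomogeneity $\chi g$ so that elliptic regularity can be iterated; it then re-runs the contour argument on $\Sigma_A$ for the difference (the subtracted term $\int_{\Sigma_A}e^{-it\lambda}R_V(-\lambda)\,d\lambda$ contributes only finitely many eigenprojections) and treats the residues below $-A$ by small contour integrals of the same difference. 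You instead take the $\mcal D_N$ bounds of Theorem \ref{thm:resExpand} as a black box and convert them by a purely elliptic, $t$- and $\lambda$-independent embedding: compactly supported elements of $\mcal D_M$ lie in $\mcal E_{2M}$, proved via the decomposition $u=w+R_0(\lambda_0)\gamma^*h$ and bootstrapping $(I+VG(\lambda_0))^{-1}$ on the Sobolev scale using the order $-1$ smoothing of $G(\lambda_0)$ on smooth $\Gamma$. This cleanly separates the dynamical content (already in Theorem \ref{thm:resExpand}) from the static regularity, and the Sobolev-scale invertibility of $I+VG(\lambda_0)$ is a nice self-contained observation; what the paper's route buys in exchange is the $\lambda$-quantitative $\mcal E_N$ bound of Lemma \ref{lem:higherorder}, which it needs anyway since the $N\ge2$ case of Theorem \ref{thm:resExpand} is proved by exactly the same scheme (so your use of that case is not circular --- it rests on the final $\mcal D_N$ lemma, valid for $C^{1,1}$ --- but you are leaning on a part of Theorem \ref{thm:resExpand} whose proof the paper only completes after Theorem \ref{thm:higherorder}).

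Two small repairs to record: the assertion that $\Im\lambda_0>0$ forces $\lambda_0^2\notin\Spec(-\Delta_{V,\Gamma})$, and that there are ``no resonances in the upper half-plane,'' is not accurate --- the poles $i\mu_j$ corresponding to negative eigenvalues lie on the positive imaginary axis; choose instead $\lambda_0$ with $\Re\lambda_0\neq0$, $\Im\lambda_0>0$, so $\lambda_0^2\notin\re$, which gives both $\lambda_0^2\notin\Spec$ and invertibility of $I+VG(\lambda_0)$ on $L^2(\Gamma)$. Also, the single-layer mapping bound you invoke is at energy $\lambda_0$ rather than $0$; either quote the transmission property for the resolvent kernel at fixed $\lambda_0$ (standard for $C^\infty$ boundaries) or reduce to Lemma \ref{lem:smoothtransmission} by noting $G_0(\lambda_0,x,y)-G_0(0,x,y)$ is less singular. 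Neither point affects the structure of your argument.
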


In addition to describing resonances as poles of the meromorphic continuation of the resolvent, we will give a more concrete description of resonances in Sections \ref{sec:equivalence} and \ref{sec:resExpand}. We show that
$\lambda$ is a resonance of the system if and only if there is a nontrivial $\lambda$-outgoing solution $u\in\Dloc$ to the equation
\begin{equation}
\label{eqn:mainDelta}(-\Delta -\lambda^2+V\otimes\delta_{\Gamma})u=0\,,
\end{equation}
where we define
$$
\Dloc=\{u\,:\,\chi u\in \mcal D\:\,\text{whenever}\; \chi \in C^\infty_c(\re^d)\;\text{and}\;\chi=1\;\text{on a neighborhood of}\;\Gamma\bigr\}\,.
$$
Here we say that $u$ is $\lambda$-\emph{outgoing} if for some $R<\infty$, and 
some compactly supported distribution $g$, we can write
$$
u(x)=\bigl(R_0(\lambda)g\bigr)(x)\quad\text{for}\quad|x|\ge R\,.
$$

Moreover, if we assume that $\Gamma=\partial\Omega$ for a $C^{1,1}$ domain $\Omega$, and that $V:H^{\frac 12}(\partial\Omega)\to H^{\frac 12}(\partial\Omega)$, we show this is equivalent to solving the following transmission problem with $u\in H^1_{\loc}(\re^d)$, and with $u|_{\Omega}=u_1\in H^2(\Omega)$, $u|_{\re^d\setminus\overline{\Omega}}=u_2\in H^2_{\loc}(\re^d\setminus\overline\Omega)$,
\begin{equation}
\label{eqn:main}
\begin{cases}(-\Delta -\lambda^2)u_1=0&\text{ in }\Omega\\
(-\Delta -\lambda^2)u_2=0 & \text{ in }\re^d\setminus \overline{\Omega}\\
u_1=u_2&\text{ on }\partial \Omega\\
\partial_\nu u_1+\partial_{\nu '}u_2+Vu_1=0&\text{ on }\partial \Omega\\
u_2\;\lambda\text{-outgoing}
\end{cases}
\end{equation}
Here, $\partial_\nu$ and $\partial_{\nu'}$ are respectively the interior and exterior normal derivatives of $u$ at $\partial\Omega$.

The outline of this paper is as follows. In Section \ref{sec:preliminaries} we present the definition of $-\Delta_{V,\Omega}$ and its domain, as well as some preliminary bounds on the outgoing Green's function $G_0(\lambda,x,y)$. In Section \ref{sec:twodimensions} we give a simple proof of Theorem \ref{thm:optimal} for $d=2$. In Section \ref{sec:upperhalfplane} we establish Theorem \ref{thm:optimal} for $\Im \lambda \geq 0$ in all dimensions, deriving the estimates from restriction estimates for eigenfunctions of the Laplacian. We include a proof of the desired restriction estimate for hypersurfaces of regularity $C^{1,1}$, since the result appears new, and also provide the proof of Theorem \ref{thm:optimalLargeIm}. In Section \ref{sec:lowerhalfplane} we complete the proof of Theorem \ref{thm:optimal} for $\Im\lambda<0$ using the Phragm\'en-Lindel\"of theorem. In Section \ref{sec:equivalence} we demonstrate the meromorphic continuation of $R_V(\lambda)$, give the proof of Theorem \ref{thm:resFreeEasy}, and relate resonances to solvability of an equation on $\Gamma$, and for $\Gamma=\partial\Omega$ to solvability of \eqref{eqn:main}. In Section \ref{sec:resExpand} we give more detailed structure of the meromorphic continuation of $R_V(\lambda)$. We establish mapping bounds for compact cutoffs of $R_V(\lambda)$, and use these to prove Theorems \ref{thm:resExpand} and \ref{thm:higherorder} by a contour integration argument. In Section \ref{sec:transmission} we prove a needed transmission property estimate for boundaries of regularity $C^{1,1}$.

\medskip

\noindent
{\sc Acknowledgements.} The authors would like to thank Maciej Zworski for valuable guidance and discussions and Semyon Dyatlov for many useful conversations. 
This material is based upon work supported by the National Science Foundation under Grants 
DGE-1106400, DMS-1201417, and DMS-1161283. This work was partially supported by a grant from the Simons Foundation (\#266371 to Hart Smith).

%%%%%%%%%%%%%%%%%%%%%%%%%%%%%%%%%%%%
\newpage

\section{Preliminaries}
\label{sec:preliminaries}
\subsection{Determination of $-\Delta_{V,\Gamma}$ and its domain.}
\label{sec:formalDefinition}
We define the operator $-\Delta_{V,\Gamma}$ using the symmetric, densely defined quadratic form 
$$
Q_{V,\Gamma}(u,w):=\la \nabla u,\nabla w\ra_{L^2(\re^d)} + \la Vu,w\ra_{L^2(\Gamma)}\,
$$
with domain $H^1(\re^d)\subset L^2(\re^d)$.

For $\Gamma$ a finite union of compact subsets of $C^{1,1}$ hypersurfaces (indeed Lipschitz hypersurfaces suffice), we can bound
$$
\|u\|_{L^2(\Gamma)}\le C\,\|u\|^{\frac 12}_{L^2}\|u\|^{\frac 12}_{H^1}\le C\,\e\, \|u\|_{H^1}+C\,\e^{-1}\|u\|_{L^2}\,.
$$
It follows that there exist $c\,, C>0$ such that 
$$
|Q_{V,\Gamma}(u,w)|\leq \|u\|_{H^1}\|w\|_{H^1}\quad\text{ and }\quad c\,\|u\|_{H^1}^2\leq Q_{V,\Gamma}(u,u)+C\|u\|_{L^2}^2\,.
$$
By Reed-Simon \cite[Theorem VIII.15]{RS}, $Q_{V,\Gamma}(u,w)$ is determined by a unique self-adjoint operator
$-\Delta_{V,\Gamma}$, with domain $\mcal{D}$ consisting of $u\in H^1$ for which $Q_{V,\Gamma}(u,w)\le C\|w\|_{L^2}$.

For $u\in \mcal D$, by the Riesz representation theorem, we have $Q_{V,\Gamma}(u,w)=\la f,w\ra$ for some $f\in L^2(\re^d)$, and taking $w\in C_c^\infty(\re^d)$ shows that in the sense of distributions 
\begin{equation}
\label{eqn:LaplaceDistributional}
-\Delta u+(V u|_{\Gamma})\delta_{\Gamma}=f\,.
\end{equation}
Conversely, if $u\in H^1(\re^d)$ and \eqref{eqn:LaplaceDistributional} holds for some $f\in L^2(\re^d)$, then by density of $C_c^\infty\subset H^1$ we have $Q_{V,\Gamma}(u,w)=\la f,w\ra$ for $w\in H^1(\re^d)$, hence $u\in\mcal D$, and $-\Delta_{V,\Gamma} u$ is given by the left hand side of \eqref{eqn:LaplaceDistributional}. We thus can write (up to a constant of proportionality)
$$
\|u\|_{\mcal D} = \|u\|_{H^1}+ \|\Delta_{V,\Gamma}u\|_{L^2}\,,
$$
where finiteness of the second term carries the assumption that $\Delta_{V,\Gamma}u\in L^2$.

The domain $\mcal D_N\subset \mcal D$ is defined for $N\ge 1$ by the condition $\Delta_{V,\Gamma}u\in \mcal D_{N-1}$, and we will recursively define (consistent up to constants with the definition using the functional calculus)
$$
\|u\|_{\mcal D_N} = \|u\|_{H^1}+ \|\Delta_{V,\Gamma}u\|_{\mcal D_{N-1}}\,,\qquad N\ge 1\,.
$$

Suppose that $\chi\in C_c^\infty(\re^d\setminus\Gamma)$ and that $u\in H^1$ solves \eqref{eqn:LaplaceDistributional}. Then, 
$$
\Delta (\chi u)= \chi f +2\nabla \chi \cdot \nabla u+(\Delta \chi)u\in L^2(\re^d)\,.
$$
Hence,
$$
\|\chi u\|_{H^2}\le C_\chi \|u\|_{\mcal D}\,.
$$
That is, $\mcal D\subset H^1(\re^d)\cap H^2_{\loc}(\re^d\setminus\Gamma)$, with continuous inclusion.
Similar arguments show that 
$$
\mcal{D}_N\subset H^1(\re^d)\cap H^{2N}_{\loc}(\re^d\setminus\Gamma)\,.
$$

The behavior of $u$ near $\Gamma$ may be more singular. For general $V$ acting on $L^2(\Gamma)$, from
\eqref{eqn:LaplaceDistributional} and the fact that $(Vu|_\Gamma)\delta_\Gamma\in H^{-\frac 12-\e}(\re^d)$ for all $\e>0$, we conclude that $u\in H^{\frac 32-\e}(\re^d).$ However, under additional assumptions on $V$ and $\Gamma$ we can give a full description of $\mcal{D}$ near $\Gamma$. 

For the purposes of the remainder of this section we assume that $\Gamma=\partial\Omega$ for some bounded open domain $\Omega\subset \re^d$, and that $\partial\Omega$ is a $C^{1,1}$ hypersurface; that is, locally $\partial\Omega$ can be written as the graph of a $C^{1,1}$ function. We assume also that $V:H^{\frac 12}(\partial\Omega)\to H^{\frac 12}(\partial\Omega)$. Then since $u\in H^1(\re^d)$, $Vu|_{\partial\Omega}\in H^{\frac 12}(\partial\Omega)$.
Hence, Lemma \ref{lem:transmission} combined with \eqref{eqn:LaplaceDistributional} shows that
$$\mcal D\subset \mcal E_2=H^1(\re^d)\cap (H^2(\Omega)\oplus H^2(\re^d\setminus\overline{\Omega}))\,,
$$
with continuous inclusion.
We remark that
$H^2(\Omega)$ and $H^2(\re^d\setminus\overline{\Omega})$ can be identified as restrictions of $H^2(\re^d)$ functions; see \cite{Calderon} and \cite[Theorem VI.5]{Stein}. Thus, if $u\in \mcal D$ both $u$ and
its first derivatives have well defined traces on $\partial\Omega$, respectively of regularity $H^{\frac 32}(\partial\Omega)$ and $H^{\frac 12}(\partial\Omega)$.

For $w \in H^1(\re^d)$ and $u\in H^1(\re^d)\cap (H^2(\Omega)\oplus H^2(\re^d\setminus\overline{\Omega}))$,
it follows from Green's identities that
$$
Q_{V,\partial\Omega}(u,w)=\la -\Delta u,w\ra_{\Omega}+\la -\Delta u,w\ra_{\re^d\setminus\overline\Omega}+
\la \partial_{\nu}u+\partial_{\nu'}u+Vu,w\ra_{\partial\Omega}\,,
$$
where $\partial_\nu$ and $\partial_{\nu'}$ denote the exterior normal derivatives from $\Omega$ and $\re^d\setminus\overline{\Omega}$.
Thus, in the case that $V:H^{\frac 12}(\partial\Omega)\to H^{\frac 12}(\partial\Omega)$, we can completely characterize the domain $\mcal D$ of the self-adjoint operator $-\Delta_{V,\partial\Omega}$ as
\begin{equation}\label{eqn:domainChar}
\mcal D=\bigl\{u\in H^1(\re^d)\cap \bigl(H^2(\Omega)\oplus H^2(\re^d\setminus\overline{\Omega})\bigr)\quad
\text{such that}\quad \partial_{\nu}u+\partial_{\nu'}u+Vu=0\,\bigr\}\,,
\end{equation}
in which case $\Delta_{V,\partial\Omega}u=\Delta u|_\Omega\oplus \Delta u|_{\re^d\setminus\overline\Omega}$.

\subsection{Bounds on Green's function}
\label{sec:boundGreenFunc}
We conclude this section by reviewing bounds on the convolution kernel $G_0(\lambda,x,y)$ associated to the operator $R_0(\lambda)$. It can be written in terms of the Hankel functions
of the first kind,
$$
G_0(\lambda,x,y)=C_d\,\lambda^{d-2}\,\bigl(\lambda|x-y|\bigr)^{-\frac{d-2}2}\,H^{(1)}_{\frac d2-1}\bigl(\lambda |x-y|\bigr)\,,
$$
for some constant $C_d$. If $d$ is odd, this can be written as a finite expansion
$$
G_0(\lambda,x,y)=\lambda^{d-2}\,e^{i\lambda|x-y|}
\sum_{j=\frac{d-1}2}^{d-2}\frac {c_{d,j}}{\bigl(\lambda|x-y|\bigr)^j}\,.
$$
For $x\ne y$ this form extends to $\lambda\in\complex$, and defines the analytic extension of $R_0(\lambda)$.
In particular, we have the upper bounds
\begin{equation}\label{R0bounds}
|G_0(\lambda,x,y)|\lesssim
\begin{cases}
|x-y|^{2-d}\,,& |x-y|\le|\lambda|^{-1}\,,\\
e^{-\Im\lambda|x-y|}\,|\lambda|^{\frac{d-3}2}\,|x-y|^{\frac{1-d}2}\,,&|x-y|\ge|\lambda|^{-1}\,.
\rule{0pt}{14pt}
\end{cases}
\end{equation}
For $d$ even, and $d\ne 2$, the bounds \eqref{R0bounds} hold for $\Im\lambda>0$, as well as for the analytic extension to $-\pi\le\arg\lambda\le 2\pi$. For $-\pi<\arg\lambda< 2\pi$ this
follows by the asymptotics of $H^{(1)}_n(z)$; see for example \cite[(9.2.3)]{AS}.
To see that it extends to the closed region, we use the relation (valid in all dimensions)
$$
G_0(e^{i\pi}\lambda,x,y)-G_0(\lambda,x,y)=\frac{i\,\lambda^{d-2}}{(2\pi)^{d-1}}\int_{\sph^{d-1}} e^{i\lambda\la x-y,\omega\ra}\,d\omega=C_d\,\lambda^{d-2}\,\bigl(\lambda|x-y|\bigr)^{-\frac{d-2}2}\,J_{\frac d2-1}\bigl(\lambda|x-y|\bigr)
$$
where $d\omega$ is surface measure on the unit sphere $\sph^{d-1}\subset\re^d$, and $e^{i\pi}$ indicates analytic continuation through positive angle $\pi$. The bounds \eqref{R0bounds} then follow from the asymptotics of $J_n(z)$ and the bounds for $\Im\lambda\ge 0$. We also note as a consequence of the above that, for $\lambda\in\re\setminus\{0\}$, and any sheet of the continuation in even dimensions,
\begin{equation}\label{R0diff}
G_0(e^{i\pi}\lambda,x,y)-G_0(\lambda,x,y)=2\pi i\,(\sgn\lambda)^d\,|\lambda|^{-1}\widehat{\delta_{\sph^{d-1}_\lambda}}(x-y)\,,
\end{equation}
where $\delta_{\sph^{d-1}_\lambda}$ denotes surface measure on the sphere $|\xi|=|\lambda|$ in
$\re^d$.

In the case that $d=2$,  in \eqref{R0bounds} one need replace $|x-y|^{2-d}$ by $-\ln |x-y|$ in the bounds for $|x-y|\le|\lambda|^{-1}$. However, for our purposes we use only the following global bound in case $d=2$,
\begin{equation*}%\label{2dR0bounds}
|G_0(\lambda,x,y)|\lesssim e^{-\Im\lambda|x-y|}\,|\lambda|^{-\frac 12}\,|x-y|^{-\frac 12}\,,
\qquad d=2\,,\quad -\pi\le\arg\lambda\le 2\pi\,.
\end{equation*}

Finally, we observe that since $G_0(\lambda,x,y)$ is smooth away from the diagonal, with
the singularity at $x=y$ integrable over $\Gamma$, it follows that $G(\lambda)$ is a compact operator on $L^2(\Gamma)$ for every $\lambda$.
This also follows from \eqref{eqn:GHhalfbound}.

%%%%%%%%%%%%%%%%%%%%%%%%%%%%%%%%%%%%%%

\section{Estimates for $d=2$}\label{sec:twodimensions}

In this section we give an elementary proof of estimate \eqref{eqn:optimalFlat} of Theorem \ref{thm:optimal} for $d=2$. Indeed, we can prove the following stronger result,
\begin{theorem}
Under the conditions of Theorem \ref{thm:optimal} the following holds, for $-\pi\le\arg\lambda\le 2\pi\,,$
$$
\|G(\lambda)f\|_{L^2(\Gamma)}\le 
\begin{cases}
C\,\la\lambda\ra^{-\frac 12}\la\Im\lambda\ra^{-\frac 12}\,\|f\|_{L^2(\Gamma)}\,,& \Im\lambda\ge 0\,,\\
C\,\la\lambda\ra^{-\frac 12}\,e^{-\LGamma\Im \lambda}\,\|f\|_{L^2(\Gamma)}\,,& \Im\lambda\le 0\,.
\rule{0pt}{16pt}
\end{cases}
$$
\end{theorem}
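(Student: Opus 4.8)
The plan is to estimate $G(\lambda)$ directly from the pointwise bound on its kernel by Schur's test, after reducing the resulting surface integral to a one-dimensional integral by projecting the pieces of $\Gamma$ onto coordinate lines.

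First, $G(\lambda)$ is the integral operator on $L^2(\Gamma)$ with kernel $G_0(\lambda,x,y)$, which is symmetric since it depends only on $|x-y|$. Schur's test therefore gives
$$
\|G(\lambda)\|_{L^2(\Gamma)\to L^2(\Gamma)}\ \le\ \sup_{x\in\Gamma}\int_\Gamma |G_0(\lambda,x,y)|\,d\sigma(y)\,,
$$
$d\sigma$ being surface measure on $\Gamma$. For $|\lambda|\ge 1$ I would insert the bound from Section~\ref{sec:boundGreenFunc}, $|G_0(\lambda,x,y)|\lesssim e^{-\Im\lambda|x-y|}\,|\lambda|^{-\frac12}\,|x-y|^{-\frac12}$, valid for $-\pi\le\arg\lambda\le 2\pi$; since $|\lambda|^{-1/2}\lesssim\la\lambda\ra^{-1/2}$ there, it remains to show $\sup_{x\in\Gamma}\int_\Gamma e^{-\Im\lambda|x-y|}|x-y|^{-\frac12}\,d\sigma(y)\lesssim\la\Im\lambda\ra^{-\frac12}$ when $\Im\lambda\ge0$, and $\lesssim e^{-\LGamma\Im\lambda}$ when $\Im\lambda\le0$.

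For the geometric reduction, write $\Gamma=\bigcup_{i=1}^n\Gamma_i$ with each $\Gamma_i$ a compact subset of a graph: after a rotation, $\Gamma_i\subset\{(t,\phi_i(t)):t\in I_i\}$ for a Lipschitz function $\phi_i$ on a compact interval $I_i$. Then $d\sigma(y)=\sqrt{1+\phi_i'(t)^2}\,dt\le C_i\,dt$, and for a point $x$ whose first coordinate in that rotated frame is $t_x$ one has $|x-y|\ge|t-t_x|$; moreover $|t-t_x|\le|x-y|\le\LGamma$ whenever $x\in\Gamma$. When $\Im\lambda\ge0$, $r\mapsto e^{-\Im\lambda r}$ is nonincreasing, so
$$
\int_{\Gamma_i}e^{-\Im\lambda|x-y|}|x-y|^{-\frac12}\,d\sigma(y)\ \le\ C_i\int_{|s|\le\LGamma}e^{-\Im\lambda|s|}\,|s|^{-\frac12}\,ds\ \lesssim\ C_i\,\la\Im\lambda\ra^{-\frac12}\,,
$$
the last step because that integral is bounded both by $4\LGamma^{1/2}$ and by $2\int_0^\infty e^{-\Im\lambda s}s^{-1/2}\,ds=2\sqrt\pi\,(\Im\lambda)^{-1/2}$. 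When $\Im\lambda\le0$ I would instead use $|x-y|\le\LGamma$ to extract the factor $e^{-\Im\lambda|x-y|}\le e^{-\LGamma\Im\lambda}$, leaving the convergent integral $\int_{|s|\le\LGamma}|s|^{-1/2}\,ds$. Summing over $i$ and combining with Schur's test gives the claimed bounds for $|\lambda|\ge1$. For $|\lambda|\le1$, where $\la\lambda\ra$ and $\la\Im\lambda\ra$ are comparable to constants, I would repeat the argument using instead the small-argument form of the kernel bound, $|G_0(\lambda,x,y)|\lesssim\bigl(1+|\ln(|\lambda|\,|x-y|)|\bigr)e^{-\Im\lambda|x-y|}$, together with the local integrability of $\ln$ along a Lipschitz curve.

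The one substantive step is the projection onto coordinate lines; it is what keeps the argument elementary, and it is where the $C^{1,1}$ (in fact merely Lipschitz) regularity of $\Gamma$ enters, via the uniform bound $d\sigma\le C_i\,dt$ and the inequality $|x-y|\ge|t-t_x|$. The points needing care are that $t_x$ must be the first coordinate of $x$ in the frame adapted to $\Gamma_i$ (so the inequality is still valid when $x$ and $y$ lie on different pieces of $\Gamma$), and that the monotonicity of $e^{-\Im\lambda r}$ is invoked with the correct sign of $\Im\lambda$ in each case; the genuinely delicate regime is $|\lambda|\to0$, where $|\lambda|^{-1/2}$ overstates the kernel and one must use the logarithmic estimate.
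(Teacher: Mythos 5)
Your argument for $|\lambda|\ge 1$ is essentially the paper's proof: both rest on the Schur test for the symmetric kernel together with the two--dimensional bound $|G_0(\lambda,x,y)|\lesssim e^{-\Im\lambda|x-y|}|\lambda|^{-\frac12}|x-y|^{-\frac12}$, and both split according to the sign of $\Im\lambda$, extracting $e^{-\LGamma\Im\lambda}$ when $\Im\lambda\le 0$ and using the decay of $e^{-\Im\lambda|x-y|}$ to gain $\la\Im\lambda\ra^{-\frac12}$ when $\Im\lambda\ge 0$. The paper simply asserts the two surface--integral estimates for Lipschitz graphs; your projection onto the coordinate line of the adapted frame (using $|x-y|\ge|t-t_x|$, $d\sigma\le C_i\,dt$, and monotonicity of $e^{-\Im\lambda r}$) is a correct and clean way to supply those details, and you are right to note that the inequality must be taken with $t_x$ the coordinate of $x$ in the frame of the piece $\Gamma_i$ containing $y$.

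The one genuine gap is your treatment of $|\lambda|\le 1$. The bound $|G_0(\lambda,x,y)|\lesssim\bigl(1+|\ln(|\lambda|\,|x-y|)|\bigr)e^{-\Im\lambda|x-y|}$ contains the term $|\ln|\lambda||$, which is constant in $(x,y)$ but unbounded as $\lambda\to 0$; local integrability of $\ln|x-y|$ along a Lipschitz curve therefore only yields $\sup_x\int_\Gamma|G_0(\lambda,x,y)|\,d\sigma(y)\lesssim 1+\ln(1/|\lambda|)$, not a constant. This is not a removable defect of your argument: in $d=2$ the piece of the kernel proportional to $-\tfrac{1}{2\pi}\ln\lambda$ acts as $f\mapsto c_\lambda\int_\Gamma f\,d\sigma$ with $|c_\lambda|\sim\ln(1/|\lambda|)$, so $\|G(\lambda)\|_{L^2(\Gamma)\to L^2(\Gamma)}$ really does grow logarithmically as $\lambda\to 0$, and no Schur-type estimate can produce the claimed uniform constant there. (The paper's own proof glosses over this same point by quoting the kernel bound with $\la\lambda\ra^{-\frac12}$ in place of $|\lambda|^{-\frac12}$ ``for $x,y$ in a bounded set,'' which likewise breaks down as $\lambda\to 0$.) So your proposal matches the paper's proof in the regime where the theorem is of interest, $|\lambda|\gtrsim 1$, but your stated fix for small $|\lambda|$ does not close that case, and you should either restrict the statement to $|\lambda|$ bounded below or accept an extra factor of $1+\ln(1/|\lambda|)$ near $\lambda=0$.
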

\begin{proof}
We use the kernel bounds, for $x,y$ in a bounded set,
$$
|G_0(\lambda,x,y)|\le C\,e^{-\Im\lambda|x-y|}\,\la\lambda\ra^{-\frac 12}\,|x-y|^{-\frac 12}\,.
$$
By the Schur test and symmetry of the kernel, the operator norm is bounded by the following
$$
\sup_x\int_\Gamma |G_0(\lambda,x,y)|\,d\sigma(y)\,.
$$
First consider $\Im\lambda\le 0$. Then $e^{-\Im\lambda|x-y|}\le e^{-\LGamma\Im\lambda}$ for $x,y\in\Gamma$, and since $\Gamma$ is a finite union of subsets of $C^{1,1}$ hypersurfaces the desired bound follows from the following, which holds if $\Gamma$ is a bounded subset of the graph of a Lipschitz function,
$$
\sup_x\int_\Gamma |x-y|^{-\frac 12}\,d\sigma(y)\le C\,.
$$
For $\Im\lambda\ge 0$, we use instead the bound
$$
\sup_x\int_\Gamma e^{-\Im\lambda|x-y|}\,|x-y|^{-\frac 12}\,d\sigma(y)
\le C\,\la\Im\lambda\ra^{-\frac 12}\,.
$$
\end{proof}

%%%%%%%%%%%%%%%%%%%%%%%%%%%%%%%%%%%%%%%%%%%%%%%%

\section{Resolvent Bounds in the Upper Half Plane}\label{sec:upperhalfplane}

In this section, we prove Theorems \ref{thm:optimal} and \ref{thm:optimalLargeIm} for $\Im \lambda >0$. 

We assume that $\Gamma$ is a finite union of compact subsets of embedded $C^{1,1}$ hypersurfaces, with induced surface measure. For $f\in L^2(\Gamma)$ we use $f\delta_\Gamma=\gamma^*f$ to denote the induced compactly supported distribution.

For $\Im\lambda>0$ let $R_0(\lambda)=(-\Delta- \lambda^2)^{-1}$ be the operator with Fourier multiplier $(|\xi|^2-\lambda^2)^{-1}$. 
For the proof of both Theorem \ref{thm:optimal} and Theorem \ref{thm:optimalLargeIm} we will estimate 
\begin{equation}
\label{eqn:restrictedDualEstimate}
Q_{\lambda}(f,g):=\int R_0(\lambda)(f\delta_\Gamma)\,\overline{g}\delta_\Gamma\,.
\end{equation}
For $\Im\lambda>0$, the right hand side \eqref{eqn:restrictedDualEstimate} agrees with the distributional pairing of $R_0(\lambda)(f\delta_\Gamma)\in H^{\frac 32-\e}$ with $g\delta_\Gamma\in H^{-\frac 12-\e}\,,$ and hence by the Plancherel theorem
\begin{equation}
\label{eqn:restrictedDualPlancherel}
Q_{\lambda}(f,g)=\int\frac{\widehat{f\delta_\Gamma}(\xi)\,\overline{\widehat{g\delta_\Gamma}}(\xi)}{|\xi|^2-\lambda^2}\,d\xi\,.
\end{equation}
For $|\lambda|\le 2$, the uniform bounds
$$
|Q_{\lambda}(f,g)|\le C\,\|f\|_{L^2(\Gamma)}\,\|g\|_{L^2(\Gamma)}\,,\qquad|\lambda|\le 2\,,
$$
follow easily from $f\delta_\Gamma\,, g\delta_\Gamma\in H^{-\frac 12-\epsilon}(\re^d)$, so we
focus on $|\lambda|\ge 2$.

We start by showing that resolvent bounds for $\lambda$ in the upper half plane can be deduced from restriction bounds for 
$\widehat{f\delta_\Gamma}$. Indeed, the following equivalence holds with $\delta_\Gamma$ replaced by any regular measure supported on a compact set.

\begin{lemma}
Suppose that for some $\alpha\in(0,1)$ the following estimate holds for $r>0$,
\begin{equation}
\label{eqn:fourierRestrictionEstimate}
\int\left|\widehat{f\delta_\Gamma}(\xi)\right|^2\delta(|\xi|-r)\leq C\,\la r\ra^\alpha \|f\|_{L^2(\Gamma)}^2\,.
\end{equation}
Then, for $\lambda$ in the upper half plane with $|\lambda|\geq 2$, 
$$
|Q_\lambda(f,g)|\leq  C\,|\lambda|^{\alpha-1}\log |\lambda|\,\|f\|_{L^2(\Gamma)}\|g\|_{L^2(\Gamma)}\,,
$$
where $Q_\lambda$ is as in \eqref{eqn:restrictedDualEstimate}.
\end{lemma}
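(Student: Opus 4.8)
Starting from the Plancherel identity \eqref{eqn:restrictedDualPlancherel}, I would write
$$
Q_\lambda(f,g)=\int_0^\infty \frac{1}{r^2-\lambda^2}\left(\int \widehat{f\delta_\Gamma}(\xi)\,\overline{\widehat{g\delta_\Gamma}}(\xi)\,\delta(|\xi|-r)\,d\sigma(\xi)\right)dr\,,
$$
using polar coordinates in $\xi$. Call the inner integral $F(r)$; by Cauchy--Schwarz and the hypothesis \eqref{eqn:fourierRestrictionEstimate} applied to $f$ and to $g$, we get the pointwise bound $|F(r)|\le C\,\la r\ra^\alpha\,\|f\|_{L^2(\Gamma)}\|g\|_{L^2(\Gamma)}$. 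So the whole problem reduces to estimating
$$
\int_0^\infty \frac{\la r\ra^\alpha}{|r^2-\lambda^2|}\,dr\,,\qquad \Im\lambda>0,\ |\lambda|\ge 2\,,
$$
and showing it is $\lesssim |\lambda|^{\alpha-1}\log|\lambda|$.

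\textbf{Estimating the weighted integral.} Write $|\lambda|=\rho$ and note $|r^2-\lambda^2|=|r-\lambda|\,|r+\lambda|\ge |r-\lambda|\cdot r$ (since $\Im\lambda>0$, $|r+\lambda|\ge r$), and also $|r^2-\lambda^2|\gtrsim r^2$ once $r\ge 2\rho$, while $|r^2-\lambda^2|\gtrsim \rho^2$ for $r\le \rho/2$. I would split the $r$-integral into three ranges: $r\in[0,\rho/2]$, $r\in[\rho/2,2\rho]$, and $r\in[2\rho,\infty)$. On the first range $\la r\ra^\alpha\lesssim \rho^\alpha$ and $|r^2-\lambda^2|\gtrsim\rho^2$, giving $\lesssim \rho^\alpha\cdot\rho\cdot\rho^{-2}=\rho^{\alpha-1}$. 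On the last range, $\la r\ra^\alpha/|r^2-\lambda^2|\lesssim r^{\alpha-2}$, which integrates (since $\alpha<1$) to $\lesssim \rho^{\alpha-1}$. The middle range is where the logarithm appears: there $\la r\ra^\alpha\lesssim\rho^\alpha$ and $|r+\lambda|\gtrsim\rho$, so the contribution is $\lesssim \rho^{\alpha-1}\int_{\rho/2}^{2\rho}\frac{dr}{|r-\lambda|}$. Since $\Im\lambda>0$ with $|\lambda|=\rho\ge 2$, one has $|r-\lambda|\ge \max(|r-\Re\lambda|,\,|\Im\lambda|)\ge 1$ is not quite enough, but a direct bound $\int_{\rho/2}^{2\rho}\frac{dr}{|r-\lambda|}\le \int_{|r-\Re\lambda|\le 2\rho}\frac{dr}{|r-\Re\lambda|}$, cut off below at scale $\approx 1$ using $|r-\lambda|\ge\sqrt{(r-\Re\lambda)^2+(\Im\lambda)^2}$ and $|r-\lambda|\gtrsim |r-\Re\lambda|$ when $|r-\Re\lambda|\ge\Im\lambda$, yields $\lesssim \log\rho$ (the worst case being $\Im\lambda\approx 1$, or more carefully $\Im\lambda$ small, where the integral of $1/|r-\Re\lambda|$ over an interval of length $\sim\rho$ with a lower cutoff at $\max(\Im\lambda, \text{something})$ is $\lesssim\log\rho + |\log\Im\lambda|$; but when $\Im\lambda$ is very small one instead bounds $|r^2-\lambda^2|\gtrsim|r^2-(\Re\lambda)^2|$ away from $r=\Re\lambda$ and handles the near-diagonal piece separately). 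Combining the three ranges gives $|Q_\lambda(f,g)|\lesssim \rho^{\alpha-1}\log\rho\,\|f\|_{L^2(\Gamma)}\|g\|_{L^2(\Gamma)}$, which is the claim.

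\textbf{Main obstacle.} The only delicate point is the middle range $r\approx|\lambda|$, where one must control $\int \frac{dr}{|r-\lambda|}$ uniformly for all $\lambda$ in the upper half plane, including $\Im\lambda$ close to $0$ (the resolvent genuinely blows up as $\lambda\to\re$, but the restriction estimate \eqref{eqn:fourierRestrictionEstimate} is exactly what tames it, since it bounds the ``density'' $F(r)$ pointwise rather than letting it concentrate near $r=|\lambda|$). One has to be slightly careful that the $\delta(|\xi|-r)$ appearing in \eqref{eqn:fourierRestrictionEstimate} really is the surface-measure normalization matching the coarea decomposition of $d\xi$, so that the Cauchy--Schwarz step is clean; this is just bookkeeping. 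A secondary point: for $\Im\lambda>0$ the integrand $1/(r^2-\lambda^2)$ is locally integrable on $[0,\infty)$, so no principal-value subtleties arise, and the interchange of $\int_0^\infty dr$ with the inner $\xi$-integration is justified by Fubini using the $H^{-1/2-\e}$ bounds already noted. Thus the proof is essentially a one-variable integral estimate once \eqref{eqn:fourierRestrictionEstimate} is in hand.
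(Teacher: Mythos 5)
Your reduction to the one–variable integral $\int_0^\infty \la r\ra^\alpha\,|r^2-\lambda^2|^{-1}\,dr$ has a genuine gap: that integral is \emph{not} bounded by $C\,|\lambda|^{\alpha-1}\log|\lambda|$ uniformly in the upper half plane. Near $r=\Re\lambda$ one has $|r^2-\lambda^2|\approx|\lambda|\sqrt{(r-\Re\lambda)^2+(\Im\lambda)^2}$, so
$$
\int_{|r-\Re\lambda|\le 1}\frac{\la r\ra^\alpha}{|r^2-\lambda^2|}\,dr\;\approx\;|\lambda|^{\alpha-1}\log\frac{1}{\Im\lambda}\,,
$$
which blows up as $\Im\lambda\to 0^+$. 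Your parenthetical remark that for small $\Im\lambda$ one ``handles the near-diagonal piece separately'' is exactly the crux of the lemma, and a pointwise bound on $F(r)=\int \widehat{f\delta_\Gamma}\,\overline{\widehat{g\delta_\Gamma}}\,\delta(|\xi|-r)$ cannot do it: once you take absolute values, the kernel $1/|r-\lambda|$ is genuinely log-divergent on a unit interval as $\Im\lambda\downarrow 0$, and no splitting of the $r$-axis repairs this. The boundary values $R_0(\lambda+i0)$ for real $\lambda$ exist only because of a principal-value/cancellation structure in $r$, and exploiting it requires some \emph{regularity} of $F(r)$ in the radial variable, not just its size. Your outer ranges ($|r-|\lambda||\ge 1$, or $\Im\lambda\ge1$) are fine and agree with the paper's treatment; the missing ingredient is entirely in the region $0\le\Im\lambda\le 1$, $\bigl||\xi|-|\lambda|\bigr|\le 1$.

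The paper supplies exactly this missing input: since $f\delta_\Gamma$ is compactly supported, the hypothesis \eqref{eqn:fourierRestrictionEstimate} self-improves to the same bound for $\nabla_\xi\widehat{f\delta_\Gamma}$ restricted to spheres, namely \eqref{eqn:fourierRestrictionEstimate2}. One then writes, for $\Re\lambda\ge0$,
$$
\frac{1}{|\xi|^2-\lambda^2}=\frac{1}{|\xi|+\lambda}\;\frac{\xi}{|\xi|}\cdot\nabla_\xi\log(|\xi|-\lambda)\,,
$$
inserts a cutoff $\chi(|\xi|-|\lambda|)$, and integrates by parts, moving $\nabla_\xi$ onto $\chi\,(|\xi|+\lambda)^{-1}\widehat{f\delta_\Gamma}\,\overline{\widehat{g\delta_\Gamma}}$. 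The point is that $\log(|\xi|-\lambda)$, unlike $(|\xi|-\lambda)^{-1}$, is integrable near $|\xi|=\Re\lambda$ uniformly as $\Im\lambda\to0^+$, so together with \eqref{eqn:fourierRestrictionEstimate}, \eqref{eqn:fourierRestrictionEstimate2} and Cauchy--Schwarz this piece is bounded by $C\,|\lambda|^{\alpha-1}\|f\|_{L^2(\Gamma)}\|g\|_{L^2(\Gamma)}$ with no extra $\log(1/\Im\lambda)$. To repair your argument you would need to add this step (or an equivalent use of radial regularity of $\widehat{f\delta_\Gamma}$, e.g.\ a Lipschitz bound on $F(r)$ allowing you to subtract $F(\Re\lambda)$ and exploit the cancellation of $\Re\,(r-\lambda)^{-1}$); as written, the proposal does not prove the stated bound for $\lambda$ close to the real axis.
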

\begin{proof}
Consider first the integral in \eqref{eqn:restrictedDualPlancherel} over $\bigl||\xi|-|\lambda|\bigr|\ge 1$. Since $\bigl||\xi|^2-\lambda^2\bigr|\ge \bigl||\xi|^2-|\lambda|^2\bigr|$, by the Schwartz inequality and \eqref{eqn:fourierRestrictionEstimate}
this piece of the integral is bounded by
\begin{align*}
\|f\|_{L^2(\Gamma)}\|g\|_{L^2(\Gamma)}
\int_{|r-|\lambda||\ge 1}
\la r\ra^{\alpha}\,\bigl|\,r^2-|\lambda|^2\,\bigr|^{-1}dr
\leq C\,|\lambda|^{\alpha-1}\log |\lambda|\,\|f\|_{L^2(\Gamma)}\|g\|_{L^2(\Gamma)}.
\end{align*}
Next, if $\Im\lambda\ge 1$, then $\bigl||\xi|^2-\lambda^2\bigr|\ge |\lambda|$, and by 
\eqref{eqn:fourierRestrictionEstimate}
$$
\Biggl|\;\int_{||\xi|-|\lambda||\le 1}
\frac{\widehat{f\delta_\Gamma}(\xi)\,\overline{\widehat{g\delta_\Gamma}}(\xi)}{|\xi|^2-\lambda^2}\,d\xi\;\Biggr|\le C\,|\lambda|^{\alpha-1}\,\|f\|_{L^2(\Gamma)}\|g\|_{L^2(\Gamma)}.
$$
Thus, we may restrict our attention to $0\le\Im \lambda\leq 1$ and $\bigl||\xi|-|\lambda|\bigr|\le 1$. 
For this piece we use that \eqref{eqn:fourierRestrictionEstimate} implies 
\begin{equation}
\label{eqn:fourierRestrictionEstimate2}
\int\left|\nabla_{\xi}\,\widehat{f\delta_\Gamma}(\xi)\right|^2\delta(|\xi|-r)\leq C\,\la r\ra^\alpha\|f\|_{L^2(\Gamma)}^2\,,
\end{equation}
due to the compact support of $f\delta_\Gamma$. 

We consider $\Re\lambda\ge 0$, the other case following similarly, and write
$$
\frac{1}{|\xi|^2-\lambda^2}=\frac{1}{|\xi|+\lambda}\;\frac{\xi}{|\xi|}\cdot\nabla_\xi\log(|\xi|-\lambda)\,,
$$
where the logarithm is well defined since $\Im(|\xi|-\lambda)<0$. Let $\chi(r)=1$ for $|r|\le 1$ and vanish for $|r|\ge \frac 32$. We then use integration by parts, together with
\eqref{eqn:fourierRestrictionEstimate} and \eqref{eqn:fourierRestrictionEstimate2} to bound
\begin{equation*}
\Biggl|\;\int\chi(|\xi|-|\lambda|)\,\frac{1}{|\xi|+\lambda}\,
\widehat{f\delta_\Gamma}(\xi)\,\overline{\widehat{g\delta_\Gamma}}(\xi)\,\;\frac{\xi}{|\xi|}\cdot\nabla_\xi\log(|\xi|-\lambda)\,d\xi\;\Biggr|
\le C\,|\lambda|^{\alpha-1}\,\|f\|_{L^2(\Gamma)}\|g\|_{L^2(\Gamma)}.
\end{equation*}
\end{proof}

To conclude the proof of Theorem \ref{thm:optimal}, we need to show that \eqref{eqn:fourierRestrictionEstimate} holds with $\alpha=\frac 12$ when $\Gamma$ is a compact subset of a $C^{1,1}$ hypersurface, and with $\alpha=\frac 13$ for a compact subset of a strictly convex $C^{2,1}$ hypersurface.
Since we work locally we assume that $\Gamma$ is given by the graph $x_n=F(x')$, where by an extension argument we assume that $F$ is a $C^{1,1}$ function (respectively $C^{2,1}$ function) defined on $\re^n$, and we replace surface measure on $\Gamma$ by $dx'$.
By scaling we may assume that $|\nabla F|\le\frac 1{20}$. We may also assume that $F(0)=0$.

Let $r\ge 1$, and let $\delta_{\sph^{d-1}_r}=\delta(|\xi|-r)$ be surface measure on the sphere $\sph^{d-1}_r$ of radius $r$. Assume that $g(\xi)$ is a function belonging to $L^2(\sph^{d-1}_r)$, and define
$$
Tg(x)=\int e^{i\langle x,\xi\rangle} g(\xi)\,\delta(|\xi|-r)\,.
$$
Let $\chi(x')\in C_c^\infty(\re^{d-1})$ be supported in the unit ball.
By duality, \eqref{eqn:fourierRestrictionEstimate} with $\alpha=\frac 12$ is equivalent to the following estimate
\begin{equation}\label{restrict}
\biggl(\int\bigl|(Tg)(x',F(x'))\bigr|^2\,\chi(x')\,dx'\biggr)^{\hf}\le C\,r^{\frac 14}\,\|g\|_{L^2(\sph^{d-1}_r)}\,,
\end{equation}
and for $\alpha=\frac 13$ is equivalent to the same estimate with $r^{\frac 14}$ replaced by $r^{\frac 16}$.

The estimate \eqref{restrict} is known as a restriction estimate for eigenfunctions of the Laplacian. $L^p$ generalizations in the setting of a smooth Riemannian manifold, with restriction to a smooth submanifold, were studied by Burq, G\'erard and Tzvetkov in \cite{BGT}. The $L^2$ estimates, again in the smooth setting, were noted by Tataru \cite{Tat} as being a corollary of an estimate of Greenleaf and Seeger \cite{GS}. These estimates were generalized to the setting of restriction to smooth submanifolds in Riemannian manifolds with metrics of $C^{1,1}$ regularity by Blair \cite{Blair}. In making a change of coordinates to flatten a submanifold the resulting metric has one lower order of regularity, thus the estimates of \cite{Blair} do not apply directly to $C^{1,1}$ submanifolds, and so we include here the proof of the $L^2$ estimate on $C^{1,1}$ hypersurfaces of Euclidean space. The estimate for strictly convex $C^{2,1}$ hypersurfaces does follow from \cite{Blair}, so we consider here just the case of a general $C^{1,1}$ hypersurface and $\alpha=\frac 12$.

We derive \eqref{restrict} from the following square function estimate for solutions to the wave equation.
\begin{lemma}\label{sqfnlemma}
Suppose that $f\in L^2(\re^d)$ and $\hat f(\xi)$ is supported in the region 
$\frac 34 r \le |\xi|\le \frac 32 r$. 
Then
\begin{equation}\label{sqfn'}
\biggl(\,\int_0^1\;\Bigl\|\Bigl(\cos(t\sqrt{-\Delta})f\Bigr)(x',F(x'))\Bigr\|_{L^2(\re^{d-1},dx')}^4\,dt\biggr)^{\frac 14}
\le C\,r^{\frac 14}\|f\|_{L^2(\re^d)}\,.
\end{equation}
\end{lemma}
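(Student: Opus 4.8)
\textbf{Proof proposal for Lemma \ref{sqfnlemma}.}

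The plan is to reduce the fixed-time restriction estimate to a space-time estimate via a square function, and then prove the latter using a Littlewood--Paley decomposition in the angular variable together with a stationary phase / local smoothing argument adapted to the $C^{1,1}$ setting. First I would observe that by a partition of unity and rescaling it suffices to treat $r$ large, and that the operator $\cos(t\sqrt{-\Delta})$ restricted to the frequency annulus $\frac34 r\le|\xi|\le\frac32 r$ can be written as a superposition $u(t,x)=\int e^{i\langle x,\xi\rangle}(\cos(t|\xi|))\hat f(\xi)\,d\xi$, so that $u$ solves the wave equation with data frequency-localized at scale $r$. The left side of \eqref{sqfn'} is then an $L^4_tL^2_{x'}$ norm of the trace of $u$ on the graph $x_n=F(x')$.

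Next I would decompose the frequency annulus into $\sim r^{1/2}$ caps $\theta$ of angular width $r^{-1/2}$ — the natural scale at which the sphere is flat to within $O(1)$ after the dual rescaling — writing $f=\sum_\theta f_\theta$. For a single cap, $u_\theta(t,x',F(x'))$ is essentially a modulated, slowly varying function on a tube, and its $L^4_tL^2_{x'}$ norm is controlled by $r^{1/4}\|f_\theta\|_{L^2}$ after accounting for the $O(r^{-1/2})$ measure of the cap and the fact that restricting to the graph of a $C^{1,1}$ function changes the phase $\langle(x',F(x')),\xi\rangle$ by a term whose Hessian in $x'$ is bounded (this is exactly where $C^{1,1}$, rather than merely Lipschitz, is used: the second derivatives of $F$ are in $L^\infty$, so the phase is a bounded perturbation of a nondegenerate one on the relevant scale). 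The key point is then to sum over caps: by almost-orthogonality of the $u_\theta$ in $L^2_{x'}$ for each fixed $t$ (the caps have finitely-overlapping spatial frequency supports in $x'$ after projecting, since $\chi$ has compact support and hence each $u_\theta(\cdot,F(\cdot))$ has $x'$-frequency support in a $\sim r^{-1/2}$-neighborhood of the projected cap), one gets $\|u(t,\cdot,F(\cdot))\|_{L^2_{x'}}^2\lesssim \sum_\theta\|u_\theta(t,\cdot,F(\cdot))\|_{L^2_{x'}}^2$, and then the $L^4$-in-$t$ norm is handled by interpolating or by using that the $t$-supports are spread out; more robustly, I would pass to $L^2_t$ on the square, i.e. bound the $L^4_t$ norm of $\sum_\theta(\cdots)$ by the square function $\big(\sum_\theta |u_\theta|^2\big)^{1/2}$ via a standard square-function inequality (Rubio de Francia / the elementary $\ell^2$-valued bound using that the $t$-frequency projections to dyadic pieces are disjoint), and then use Minkowski and the single-cap bound.

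The main obstacle I anticipate is controlling the interaction terms in the square of $\|u(t,\cdot,F(\cdot))\|_{L^2_{x'}}$ across different caps when $F$ is only $C^{1,1}$: the change of variables that would flatten $\Gamma$ costs a derivative and destroys the $C^{1,1}$ metric hypothesis (this is precisely the point noted in the text about why \cite{Blair} does not apply), so the almost-orthogonality must be proved directly on the graph. The resolution is that for $C^{1,1}$ one still has, for $\xi\in\theta$ and $\eta\in\theta'$ with $|\theta-\theta'|\gtrsim r^{-1/2}$ angularly, that the $x'$-gradient of the phase difference $\langle(x',F(x')),\xi-\eta\rangle$ has size $\gtrsim r^{1/2}|\theta-\theta'|$ while its $x'$-Hessian is $O(r|\theta-\theta'|)\cdot\|F''\|_{L^\infty}$, so a non-stationary phase integration by parts (losing only boundedly many derivatives landing on $\chi$ and on $F''\in L^\infty$, which is admissible since no further derivatives of $F$ are needed) gives rapid decay in $r^{1/2}|\theta-\theta'|$, summable over cap pairs. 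Once the square function estimate \eqref{sqfn'} is in hand, \eqref{restrict} follows from it by the standard argument of writing $Tg$ in terms of $\cos(t\sqrt{-\Delta})$ applied to an extension of $g$ to the annulus and integrating in $t$ against $\chi$, using Sobolev embedding $L^2_t\hookrightarrow$ trace at a point after the $r^{-1}$-rescaling of time, together with Hölder in $t$ on $[0,1]$ to pass from the $L^4_t$ square-function bound to the $L^2_t$ bound that controls a fixed time (equivalently, to control $\int_0^1|(Tg)(x',F(x'))|^2\chi\,dx'$ one writes it as a $t$-average and applies \eqref{sqfn'} with Hölder).
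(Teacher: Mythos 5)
Your overall architecture (cap decomposition at angular scale $r^{-1/2}$, single-cap bounds, almost orthogonality) differs from the paper's, but as written it has a genuine gap at its central step: the claimed fixed-time almost orthogonality $\|u(t,\cdot,F(\cdot))\|_{L^2_{x'}}^2\lesssim\sum_\theta\|u_\theta(t,\cdot,F(\cdot))\|_{L^2_{x'}}^2$ is false. Already at $t=0$ and $F\equiv 0$, take $\hat f$ a nonnegative bump on the whole annulus $\tfrac34 r\le|\xi|\le\tfrac32 r$: a vertical line $\{\xi'=\mathrm{const}\}$ meets on the order of $r^{1/2}$ caps, and a direct computation (in $d=2$, say) gives $\|f(\cdot,0)\|_{L^2}^2\sim r\|f\|_{L^2}^2$ while $\sum_\theta\|f_\theta(\cdot,0)\|_{L^2}^2\sim r^{1/2}\log r\,\|f\|_{L^2}^2$, so the inequality fails by nearly $r^{1/2}$. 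The supporting claim that each $u_\theta(\cdot,F(\cdot))$ has $x'$-frequency support in an $r^{-1/2}$-neighborhood of the projected cap is also untenable: near-tangential caps have strongly overlapping $\xi'$-projections, and multiplying by $e^{iF(x')\xi_n}$ with $|\xi_n|\sim r$ and $F$ merely $C^{1,1}$ spreads the $x'$-frequency by $O(r)$. The true gain comes from the time variable, not from fixed-time orthogonality: the paper runs a $TT^*$ argument, uses Hardy--Littlewood--Sobolev in $t$ to reduce \eqref{sqfn'} to the fixed-time dispersive bound \eqref{fixedtime} with norm $r^{1/2}|t|^{-1/2}$, and only then decomposes frequency space — into cubes of the $t$-dependent side $\delta=r^{1/2}t^{-1/2}$ — proving Cotlar--Stein bounds \eqref{cotlar} for the restricted kernels. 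Your scheme never exploits $t$ in the orthogonality and so cannot close.

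Two further points would also need repair even within a cap-interaction argument. First, your asserted lower bound "$|\nabla_{x'}\langle(x',F(x')),\xi-\eta\rangle|\gtrsim r^{1/2}\times(\text{angular separation})$" fails whenever the frequency difference is dominated by its normal component: with $|\nabla F|$ small but nonzero, $(\xi'-\eta')+\nabla F(x')(\xi_n-\eta_n)$ can vanish for caps that are angularly far apart (e.g.\ caps with large $|\xi_n|$ reflected across the equator). The paper handles exactly these cases not by oscillation but by spatial transport: when the frequency (or the frequency difference) has dominant normal component, the translated wave packets miss the graph, giving $(1+\delta t)^{-N}$ or $(1+\delta^{-1}|\eta_i-\eta_j|)^{-N}$ decay from size alone; integration by parts is used only when the tangential difference dominates, where the gradient bound $\ge\tfrac12|\eta_i-\eta_j|$ really holds. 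Second, your claim that the nonstationary-phase argument "needs no further derivatives of $F$" is incorrect: obtaining decay of order $N$ requires differentiating the phase gradient, hence $F$, $N$ times, which a $C^{1,1}$ function does not permit. This is precisely why the paper first mollifies, replacing $F$ by $F_r=\phi_{r^{1/2}}*F$ (harmless by the frequency localization of $f$), so that the derivative bounds \eqref{Fbounds} together with $r^{1/2}\le\delta$ make repeated integration by parts legitimate. Without the time-based reduction, the case analysis for normally-separated frequencies, and the mollification of $F$, the proposal does not yield \eqref{sqfn'}.
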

The reduction of \eqref{restrict} to Lemma \ref{sqfnlemma} is attained by letting 
$f=\psi Tg$, where $\psi\in C_c^\infty(\re^d)$ equals 1 on the ball of radius 3. 
Then $\cos(t\sqrt{-\Delta})f= \cos(tr)Tg$ for $|x|<2$ and $|t|<1$.
On the other hand,
$\hat f=\hat\psi*\bigl(g\,\delta_{\sph^{d-1}_r}\bigr)$ is rapidly decreasing away from the sphere $|\xi|=r$, so the
difference between $\hat f$ and its truncation to $\frac 34r\le |\xi|\le \frac 32r$ is easily handled. Also, a simple calculation shows that, uniformly over $r$,
$$
\|\hat\psi*(g\,\delta_{\sph^{d-1}_r})\|_{L^2(\re^d)}\le C\,\|g\|_{L^2(\sph^{d-1}_r)}\,.
$$

\begin{proof}[Proof of Lemma \ref{sqfnlemma}.]
Given a function $F_r$ such that $\sup_{x'}|F_r(x')-F(x')|\le r^{-1}$, then \eqref{sqfn'} holds if we can show that
\begin{equation}\label{sqfn}
\biggl(\,\int_0^1\;\Bigl\|\Bigl(\cos(t\sqrt{-\Delta})f\Bigr)(x',F_r(x'))\Bigr\|_{L^2(\re^{d-1},dx')}^4\,dt\biggr)^{\frac 14}
\le C\,r^{\frac 14}\|f\|_{L^2(\re^d)}\,.
\end{equation}
This follows from the fact that \eqref{sqfn}, together with the frequency localization of $f$, implies the gradient bound, uniformly over $s$,
$$
\biggl(\,\int_0^1\;\Bigl\|\partial_s\Bigl(\cos(t\sqrt{-\Delta})f\Bigr)(x',F_r(x')+s)\Bigr\|_{L^2(\re^{d-1},dx')}^4\,dt\biggr)^{\frac 14}
\le C\,r^{\frac 54}\|f\|_{L^2(\re^d)}\,.
$$

We will take $F_r$ to be a mollification of the $C^{1,1}$ function $F$ on the $r^{-\hf}$ spatial scale. 
Precisely, let $F_r=\phi_{r^{1/2}}*F$, where 
$\phi_{r^{1/2}}=r^{\frac {d-1}2}\phi(r^\hf x)$, with $\phi$ a Schwartz function of integral 1.
Then
$$
\sup_x\,|F_r(x)-F(x)|\le C\,r^{-1}\,,\qquad\sup_x\,|\nabla F_r(x)-\nabla F(x)|\le C\,r^{-\hf}\,,
$$
and $F_r$ is a smooth function with derivative bounds
\begin{equation}\label{Fbounds}
\sup_x\,|\partial_x^\alpha F_r(x)|\le C\,r^{\frac{|\alpha|-2}2}\,,\qquad |\alpha|\ge 2\,.
\end{equation}

In establishing \eqref{sqfn} we may replace $\cos(t\sqrt{-\Delta})$ by $\exp(it\sqrt{-\Delta})$.  We then use a $TT^*$ argument to reduce to proving mapping properties
for an operator on $\Gamma_r\times [0,1]$. 
Precisely, let $K_r(t-s,x-y)$ denote the
kernel of the operator
$$
\rho\bigl(r^{-1}D)\exp\bigl(i(t-s)\sqrt{-\Delta}\,\bigr)\,,\qquad D:=-i\partial\,,
$$
where $\rho$ is a smooth function supported in the region $\frac 12<|\xi|<2$.
It then suffices to show that
\begin{multline}
\label{eqn:intermediate}
\biggl\|\int_0^1\int K_r\bigl(t-s,(x'-y',F_r(x')-F_r(y'))\bigr)\,f(s,y')\,dy'ds\biggr\|_{L^4([0,1],L^2(\re^{d-1}))}\\
\le C\,r^{\hf}\|f\|_{L^{4/3}([0,1],L^2(\re^{d-1}))}\,.\rule{0pt}{15pt}
\end{multline}
By the Hardy-Littlewood-Sobolev inequality
$$
\|t^{-\sigma}*f\|_{L^q(\re)}\leq C\|f\|_{L^p(\re)}\quad \text{where}\quad\frac 1q+1=\frac{\sigma}{d}+\frac{1}{p}\,.
$$ 
Hence translation invariance in $t$ shows that \eqref{eqn:intermediate} is a consequence of the following fixed-time
estimate, for $|t|<1$,
\begin{equation}\label{fixedtime}
\biggl\|\int K_r\bigl(t,(x'-y',F_r(x')-F_r(y'))\bigr)\,f(y')\,dy'\biggr\|_{L^2(\re^{d-1})}\\
\le C\,r^{\hf}\,|t|^{-\hf}\|f\|_{L^2(\re^{d-1})}\,.\rule{0pt}{15pt}
\end{equation}

If $|t|\le r^{-1}$, then $K_r$ satisfies
$$
|K_r(t,x-y)|\le C_N\,r^d\bigl(1+r\,|x-y|\bigr)^{-N}\,,
$$
and \eqref{fixedtime} follows by the Schur test. 
To prove \eqref{fixedtime} for $|t|>r^{-1}$, we decompose the convolution kernel $K_r(t,\cdot)$ as an almost orthogonal sum of terms, each of which behaves as a normalized convolution operator.
Fix $t\in [r^{-1},1]$, and let $\delta=r^{\hf} t^{-\hf}\,.$ 
Let $\eta_j$ count the elements of the lattice of spacing $\delta$ for which $|\eta_j|\in[\frac 12 r,2r]$,
and write
$$
\rho(r^{-1}\xi)=\sum_j Q_j(\xi)\,,
$$
where $Q_j$ is supported in the cube of sidelength $\delta$ centered on $\eta_j$,
and the following bounds hold on the derivatives of $Q_j$, uniformly over $r$, $t$ and $j$,
\begin{equation}\label{derivbounds}
\bigl|\partial_\xi^\alpha Q_j(\xi)\bigr|\le C_{\alpha} \, \delta^{-|\alpha|}\,.
\end{equation}
We then write $K_r(t,x)=\sum K_j(x)\,,$ where we suppress the dependence on $r$ and $t$, and set
$$
K_j(x)=(2\pi)^{-d}\int e^{i\langle x,\xi\rangle+it|\xi|}\,Q_j(\xi)\,d\xi\,.
$$
The multiplier $t|\xi|-t|\eta_j|^{-1}\langle\eta_j,\xi\rangle$ satisfies the derivative bounds 
\eqref{derivbounds} on the support of $Q_j$, hence we may write
$$
e^{i\langle x,\xi\rangle+it|\xi|}\,Q_j(\xi)=
e^{i\langle x+t\,|\eta_j|^{-1}\eta_j,\xi\rangle}\,\tilde Q_j(\xi)\,,
$$
with $\tilde Q_j$ having the same support and derivative conditions as $Q_j$.
Consequently, we may write
$$
K_j(x)=\delta^d\,e^{i\langle x,\eta_j\rangle+it|\eta_j|}
\chi_j\bigl(\delta(x+t\,|\eta_j|^{-1}\eta_j)\bigr)\,,
$$
where $\chi_j$ is a Schwartz function, with seminorm bounds independent of $j$.
We let 
$$
\Kbar_j(x',y')=K_j\bigl(x'-y',F_r(x')-F_r(y')\bigr)\,.
$$
It follows from the Schur test that
$$
\|\Kbar_j\|_{L^2\rightarrow L^2}\le C\,\delta\,.
$$
To handle the sum over $j$ we establish the estimate
\begin{equation}\label{cotlar}
\|\Kbar_j\Kbar_i^*\|_{L^2\rightarrow L^2}+\|\Kbar_j^*\Kbar_i\|_{L^2\rightarrow L^2}
\le C_N\,\delta^2\bigl(1+\delta^{-1}|\eta_i-\eta_j|\bigr)^{-N}\,,
\end{equation}
from which the bound \eqref{fixedtime} follows by the Cotlar-Stein lemma.
Since $K_j$ and $K_j^*$ have similar form, we restrict attention to the first term in \eqref{cotlar}.

The kernel $(K_jK_i^*)(x',z')$ has absolute value dominated by
$$
\delta^{2d}\int
\bigl(1+\delta\,|x+t\,|\eta_j|^{-1}\eta_j-y|\,\bigr)^{-N}
\bigl(1+\delta\,|z+t\,|\eta_i|^{-1}\eta_i-y|\,\bigr)^{-N}\,dy'
$$
where we use the notation $y=(y',F_r(y'))$, and similarly for $x$ and $z$.

Suppose that $|(\eta_j)_n|\ge \frac 14|\eta_j|$. Then since $|F_r(x')-F_r(y')|\le \frac 1{10}|x'-y'|$,
$$
\bigl|x'+t\,|\eta_j|^{-1}\eta'_j-y'\bigr|+10\bigl|F_r(x')+t\,|\eta_j|^{-1}(\eta_j)_n-F_r(y')\bigr|\ge 
5 \,t\,,
$$
and the Schur test leads to the bound
$$
\|\Kbar_j\Kbar_i^*\|_{L^2\rightarrow L^2}\le
C_N\,\delta^2\bigl(1+\delta\,t\bigr)^{-N}\,,
$$
which is stronger than \eqref{cotlar} since $|\eta_i-\eta_j|\le 6 r$. The same estimate
holds if $|(\eta_i)_n|\ge \frac 14|\eta_i|$.

We thus assume that $|(\eta_j)_n|\le \frac 14|\eta_j|$, and similarly for $\eta_i$.
Consider then the case where $|(\eta_i-\eta_j)_n|\ge |(\eta_i-\eta_j)'|$. Then we have
$$
\bigl|(|\eta_j|^{-1}\eta_j-|\eta_i|^{-1}\eta_i)_n\bigr|\ge
\frac 1{2+2\sqrt 2}\,\bigl|(|\eta_j|^{-1}\eta_j-|\eta_i|^{-1}\eta_i)'\bigr|\,,
$$
and since $\frac 12 r\le|\eta_i|,|\eta_j|\le 2r$,
$$
\bigl|(|\eta_j|^{-1}\eta_j-|\eta_i|^{-1}\eta_i)_n\bigr|\ge \frac{1}{4\sqrt 2}\,r^{-1}
|\eta_i-\eta_j|\,.
$$
Then with $|\nabla F_r|\le \frac 1{10}$,
\begin{multline*}
|x'-z'+t(|\eta_j|^{-1}\eta_j-|\eta_i|^{-1}\eta_i)'|+10\,|F_r(x')-F_r(z')+t(|\eta_j|^{-1}\eta_j-|\eta_i|^{-1}\eta_i)_n|\\
\ge \frac 5{4\sqrt 2}\,\delta^{-2}|\eta_j-\eta_i|\,,
\end{multline*}
hence using just the absolute bounds on the kernels, the operator norm is seen to be bounded by $C_N\,\delta^2\bigl(1+\delta^{-1}|\eta_j-\eta_i|\bigr)^{-N}$ as desired.

We thus consider the case that $|(\eta_j-\eta_i)_n|\le |(\eta_j-\eta_i)'|$. Up to a factor of modulus 1, the kernel $(K_jK_i^*)(x',z')$ can be written as
$$
\delta^{2d}\int
e^{-i\langle y',\eta_j'-\eta_i'\rangle-iF_r(y')(\eta_j-\eta_i)_n}\,
\chi_j\bigl(\delta(x+t\,|\eta_j|^{-1}\eta_j-y)\bigr)\,
\overline{\chi_i}\bigl(\delta(z+t\,|\eta_i|^{-1}\eta_i-y)\bigr)
\,dy'\,,
$$
where again $y=(y',F_r(y'))$, and similarly for $x$ and $z$. Since $|\nabla F_r(y')|\le \frac 1{10}$, and $|(\eta_j-\eta_i)_n|\le |\eta_j'-\eta_i'|\,,$ we have
$$
|\eta_j'-\eta_i'+\nabla F_r(y')(\eta_j-\eta_i)_n|\ge \tfrac 12 |\eta_j-\eta_i|\,.
$$
Using the estimates \eqref{Fbounds}, and that $r^\hf\le \delta$, an integration by parts argument dominates the kernel $(K_jK^*_i)(x',z')$ by
$$
\delta^{2d}\bigl(1+\delta^{-1}|\eta_j-\eta_i|\bigr)^{-N}\int
\bigl(1+\delta\,|x+t\,|\eta_j|^{-1}\eta_j-y|\,\bigr)^{-N}
\bigl(1+\delta\,|z+t\,|\eta_i|^{-1}\eta_i-y|\,\bigr)^{-N}\,dy'\,,
$$
which leads to the desired norm bounds, concluding the proof of \eqref{cotlar}.
\end{proof}

For the proof of Theorem \ref{thm:optimalLargeIm}, first consider the case that $f=g$ and
$\Gamma$ is a graph $x_n=F(x')$, and $\Im\lambda\ge 1$. We then have uniform bounds
$$
\sup_{\xi_n}\int\bigl|\widehat{f\delta_\Gamma}(\xi',\xi_n)\bigr|^2\,d\xi'\le C\,\|f\|^2_{L^2(\Gamma)}\,.
$$
We use the lower bound $\bigl||\xi|^2-\lambda^2\bigr|\ge |\lambda|\,|\Im\lambda|$ to dominate
$$
\int_{|\xi_n|\le 2|\lambda|} 
\frac{\bigl|\widehat{f\delta_\Gamma}(\xi)\bigr|^2}
{\bigl||\xi|^2-\lambda^2\bigr|}
\,d\xi\le C\,\la\Im\lambda\ra^{-1}\,\|f\|^2_{L^2(\Gamma)}\,.
$$
For $|\xi_n|\ge 2|\lambda|$ we have $\bigl||\xi|^2-\lambda^2\bigr|\gtrsim |\xi_n|^2\,,$
hence
$$
\int_{|\xi_n|\ge 2|\lambda|} 
\frac{\bigl|\widehat{f\delta_\Gamma}(\xi)\bigr|^2}
{\bigl||\xi|^2-\lambda^2\bigr|}
\,d\xi\le C\,\la\lambda\ra^{-1}\,\|f\|^2_{L^2(\Gamma)}\,.
$$
The case $f\ne g$ and $\Gamma$ a finite union of graphs follows by a partition of unity argument and the Schwarz inequality.

%%%%%%%%%%%%%%%%%%%%%%%%%%%%%%%%%%%%%%%%%%%

\section{Resolvent Bounds in the Lower Half Plane}\label{sec:lowerhalfplane}
For $\lambda\in \re$, the resolvent $R_0(\lambda)$ is defined as the limit $R_0(\lambda+i0)$ from $\Im\lambda>0$.
The estimates of the previous sections then give that, for $\lambda\in\re $ with $|\lambda|>2$, and for some $a>0$ and $b\in\{0,1\}$, we have 
$$
\|\gamma\, R_0(\lambda)\,\gamma^*\|_{L^2(\Gamma)\to L^2(\Gamma)}\leq 
C\,|\lambda|^{-a}\,\bigl(\log|\lambda|\bigr)^b\,.
$$ 
In this section we extend this to bounds for $\Im \lambda<0$. 
\begin{lemma}
Suppose that for $\lambda\in\re$, $|\lambda|>2$, the following holds
$$
\|\gamma \,R_0(\lambda)\,\gamma^*\|_{L^2(\Gamma)\to L^2(\Gamma)}
\leq C\,|\lambda|^{-a}(\log|\lambda|)^b\,.
$$
Then for $\Im\lambda\le 0\,,$ $|\lambda|\ge 2\,,$ and $\arg\lambda\in[-\pi,0]\cup[\pi,2\pi]$ in the case that $d$ is even,
$$
\|\gamma \,R_0(\lambda)\,\gamma^*\|_{L^2(\Gamma)\to L^2(\Gamma)}
\leq C\,|\lambda|^{-a}(\log|\lambda|)^be^{-\LGamma\Im \lambda}
$$
where $\LGamma$ is the diameter of $\Gamma$.
\end{lemma}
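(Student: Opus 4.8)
The plan is to combine an a priori exponential bound in the lower half plane — coming directly from the pointwise kernel bounds \eqref{R0bounds} — with a Phragm\'en-Lindel\"of argument in a region of angular opening $\pi$, after normalizing the conjectured estimate to a constant by an explicit holomorphic scalar multiplier.

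First I would record the a priori bound. For $\Im\lambda\le 0$ and $x,y\in\Gamma$ one has $|x-y|\le\LGamma$, hence $e^{-\Im\lambda|x-y|}\le e^{-\LGamma\Im\lambda}$, and \eqref{R0bounds} (together with the stated $d=2$ bound) gives
$$
|G_0(\lambda,x,y)|\lesssim e^{-\LGamma\Im\lambda}\Bigl(|x-y|^{2-d}\,\mathbf 1_{|x-y|\le|\lambda|^{-1}}+|\lambda|^{\frac{d-3}2}|x-y|^{\frac{1-d}2}\,\mathbf 1_{|x-y|\ge|\lambda|^{-1}}\Bigr)\,.
$$
Since $\Gamma$ is a bounded subset of a Lipschitz graph, $\sigma\bigl(\Gamma\cap B(x,\rho)\bigr)\lesssim\rho^{d-1}$, so a dyadic summation bounds $\sup_x\int_\Gamma|G_0(\lambda,x,y)|\,d\sigma(y)$ by $C\,|\lambda|^{N}e^{-\LGamma\Im\lambda}$ with $N=\max\bigl(0,\tfrac{d-3}2\bigr)$; by symmetry of the kernel and the Schur test, $\|G(\lambda)\|_{L^2(\Gamma)\to L^2(\Gamma)}\le C|\lambda|^{N}e^{-\LGamma\Im\lambda}$ for $|\lambda|\ge 2$. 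The same bound holds on all of $-\pi\le\arg\lambda\le 2\pi$, since \eqref{R0bounds} does.

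Next, fixing an appropriate branch of $\lambda^a$ and $\log\lambda$, set $m(\lambda)=(i\lambda)^a(\log(i\lambda))^{-b}\,e^{-i\LGamma\lambda}$, holomorphic on $\Omega=\{\Im\lambda<0,\ |\lambda|>2\}$ (which avoids the zeros of the relevant factors), with $|m(\lambda)|\le C\,|\lambda|^a(\log|\lambda|)^{-b}\,e^{\LGamma\Im\lambda}$ there. Consider $F(\lambda)=m(\lambda)G(\lambda)$ on $\Omega$. By the a priori bound, $\|F(\lambda)\|\le C|\lambda|^{a+N}$ on $\Omega$ — polynomial growth, in particular $o(e^{\e|\lambda|})$ for every $\e>0$. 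On the boundary piece $\{\Im\lambda=0,\ |\lambda|\ge 2\}$ the hypothesis gives $\|G(\lambda)\|\le C|\lambda|^{-a}(\log|\lambda|)^b$ while $|m(\lambda)|\le C|\lambda|^a(\log|\lambda|)^{-b}$, so $\|F(\lambda)\|\le C$; on the arc $\{|\lambda|=2,\ \Im\lambda\le 0\}$ both $G(\lambda)$ (the free resolvent has no poles there) and $m(\lambda)$ are bounded, so again $\|F(\lambda)\|\le C$. Applying the Phragm\'en-Lindel\"of principle to $\la F(\lambda)f,g\ra$ for fixed $f,g\in L^2(\Gamma)$ — valid since $\Omega$ has angular opening $\pi$ and $F$ is sub-exponential of order $<1$ in $\Omega$ while bounded by $C$ on $\partial\Omega$ — and taking the supremum over $\|f\|=\|g\|=1$, we obtain $\|F(\lambda)\|\le C$ on $\Omega$, i.e.
$$
\|G(\lambda)\|\le C\,|m(\lambda)|^{-1}\le C\,|\lambda|^{-a}(\log|\lambda|)^{b}\,e^{-\LGamma\Im\lambda}\,,\qquad \Im\lambda\le 0,\ |\lambda|>2\,,
$$
and the case $|\lambda|=2$ is immediate from the a priori bound. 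In even dimensions one repeats this verbatim in each of the half-planes $\arg\lambda\in[-\pi,0]$ and $\arg\lambda\in[\pi,2\pi]$; the only additional point is that the boundary values of $\|G(\lambda)\|$ on the rays $\arg\lambda\in\{-\pi,\pi,2\pi\}$ are controlled by the real-axis hypothesis together with the jump relation \eqref{R0diff}, whose extra term $|\lambda|^{-1}\gamma\,\widehat{\delta_{\sph^{d-1}_\lambda}}\,\gamma^*$ is bounded on $L^2(\Gamma)$ by the restriction estimate \eqref{eqn:fourierRestrictionEstimate} (giving $C|\lambda|^{\alpha-1}=C|\lambda|^{-a}$).

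I expect the main point to watch is arranging the exponential rate in the a priori bound to be \emph{exactly} $\LGamma$: this is what makes the normalized function $F$ only polynomially large, so that Phragm\'en-Lindel\"of can be invoked at the critical opening angle $\pi$ (at which a boundary bound propagates inward only for interior growth of order $<1$, exponential growth of order $1$ being forbidden) and returns the sharp factor $e^{-\LGamma\Im\lambda}$ rather than a weaker one. A secondary, routine matter is the branch point of the free resolvent at $\lambda=0$ in even dimensions, which is why one excises a neighborhood of the origin and treats the sectors $\arg\lambda\in[-\pi,0]$ and $[\pi,2\pi]$ separately.
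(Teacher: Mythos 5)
Your proposal is correct and follows essentially the same route as the paper: the kernel bounds \eqref{R0bounds} plus the Schur test give polynomial growth with the exact factor $e^{-\LGamma\Im\lambda}$, one normalizes by $e^{-i\LGamma\lambda}\lambda^a(\log\lambda)^{-b}$ (the paper's choice of branch versus your $(i\lambda)^a(\log(i\lambda))^{-b}$ is immaterial), and Phragm\'en--Lindel\"of on $\{\Im\lambda\le 0,\ |\lambda|\ge 2\}$, repeated on the sheets $\arg\lambda\in[-\pi,0]$ and $[\pi,2\pi]$ in even dimensions, yields the claim. The only cosmetic difference is at the ray $\arg\lambda=2\pi$ (and $-\pi$): you bound the jump term in \eqref{R0diff} by the restriction estimate \eqref{eqn:fourierRestrictionEstimate}, which imports information beyond the lemma's hypothesis, whereas the paper stays within the hypothesis by noting the jump is sheet-independent, so $R_0(e^{2i\pi}\lambda)-R_0(e^{i\pi}\lambda)=R_0(e^{i\pi}\lambda)-R_0(\lambda)$ is controlled by the real-axis bound alone.
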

\begin{proof}
First consider the case that $d$ is odd. Suppose that 
$\|f\|_{L^2(\Gamma)}=\|g\|_{L^2(\Gamma)}=1$, and consider the function 
$$
F(\lambda)=e^{-i\LGamma\lambda}\,\lambda^a\,\bigl(\log\lambda)^{-b}\,Q_\lambda(f,g)\,,\qquad
\Im\lambda\le 0\,,\;\;|\lambda|\ge 2\,,
$$
where $\log\lambda$ is defined for $\arg\lambda\in(\frac \pi 2,\frac {5\pi}2)\,.$
Then $|F(\lambda)|\le C$ for $\lambda\in \re\setminus[-2,2]$ and for $|\lambda|=2\,.$ 
On the other hand, the resolvent kernel bounds \eqref{R0bounds} and the Schur test show that
$|F(\lambda)|$ has at most polynomial growth in $\lambda$ for $\Im\lambda \le 0$, since the
kernel $|x-x'|^{2-d}$ is integrable over a $d-1$ dimensional hypersurface. It follows by
the Phragm\'en-Lindel\"of theorem that $|F(\lambda)|\le C$ in the lower half plane.

In the case that $d$ is even, we note that the bounds of the lemma hold for $R_0(\lambda)$ if $\arg\lambda=2\pi$ and $|\lambda|\ge 2$. This follows since $R_0(e^{i\pi}\lambda)-R_0(\lambda)$
satisfies the same bounds as $R_0(\lambda)$ for $\arg\lambda=0$, and by \eqref{R0diff} we have $R_0(e^{2i\pi}\lambda)-R_0(e^{i\pi}\lambda)=R_0(e^{i\pi}\lambda)-R_0(\lambda)\,.$ We may thus apply the Phragm\'en-Lindel\"of theorem on the sheet $\pi\le\arg\lambda\le 2\pi\,.$ A similar
argument works for $-\pi\le\arg\lambda\le 0\,.$
\end{proof}

%%%%%%%%%%%%%%%%%%%%%%%%%%%%%%%%%%%%%%%%%

\section{Application to Resonance Free Regions}
\label{sec:equivalence}

In this section we establish Theorem \ref{thm:resFreeEasy}.
First, we demonstrate the meromorphic continuation of $R_V(\lambda)$ from $\Im\lambda\gg 0$ to $\lambda\in\complex$ (to the logarithmic cover in even dimensions) following arguments similar to those in the case where $V\in L^\infty_{\comp}$. We assume $\Gamma$ is a finite union of compact subsets of $C^{1,1}$ hypersurfaces, and that $\rho\in C_c^\infty(\re^d)$ with $\rho= 1$ on a neighborhood of $\Gamma$. Let
$$
K(\lambda)=(V\otimes\delta_\Gamma) R_0(\lambda)\,.
$$
The operator $K(\lambda)\rho:H^{-1}(\re^d)\rightarrow H^{-\frac 12-\e}_{\comp}$ is compact on $H^{-1}(\re^d)$ by Rellich's embedding theorem.
Furthermore, $I+K(\lambda)\rho$ is invertible if $\Im\lambda\gg 0$. To see this, note that $g+K(\lambda)\rho g=0$ and $g\in H^{-1}(\re^d)$ implies that $g=f\delta_\Gamma$ where
$f\in L^2(\Gamma)$. It follows that $f+VG(\lambda)f=0$, which implies $f=0$ for $\Im\lambda\gg 0$ by Theorem \ref{thm:optimalLargeIm}.

Consequently $(I+K(\lambda)\rho)^{-1}$ continues to a meromorphic family of Fredholm operators on $H^{-1}(\re^d)$ for $\lambda\in \complex$ (or to the logarithmic cover in even dimensions); see e.g.~Proposition 7.4 of \cite[Chapter 9]{Taylor}. Since $K=\gamma^*V\gamma R_0$, we have that
$$
(I+K(\lambda)\rho)^{-1}\gamma^*=\gamma^*(I+VG(\lambda))^{-1}\,,
$$
where $(I+VG(\lambda))^{-1}$ acts on $L^2(\Gamma)$. Hence, 
\begin{align*}
(I+K(\lambda)\rho)^{-1}&=I-(I+K(\lambda)\rho)^{-1}K(\lambda)\rho\\
&=I-\gamma^*(I+VG(\lambda))^{-1} V\gamma R_0(\lambda)\rho\,.
\end{align*}
The meromorphic extension of the resolvent $R_V(\lambda)$ for $-\Delta_{V,\Gamma}$ then equals
\begin{align*}
R_V(\lambda)&=R_0(\lambda)\,\bigl(I+K(\lambda)\rho\bigr)^{-1}\,\bigl(I-K(\lambda)(1-\rho)\bigr)\\
&=\Bigl( R_0(\lambda)-R_0(\lambda)\gamma^*(I+VG(\lambda))^{-1}V\gamma R_0(\lambda)\rho \Bigr)\,\bigl(I-K(\lambda)(1-\rho)\bigr)\,.
\end{align*}
In particular, if $g\in H^{-1}_{\comp}$ we can take $\rho g=g$ to obtain
\begin{equation}\label{eqn:resolventform}
R_V(\lambda)g=R_0(\lambda)g-R_0(\lambda)\gamma^*(I+VG(\lambda))^{-1}V\gamma R_0(\lambda)g\,.
\end{equation}
Consequently, $R_V(\lambda): H^{-1}_{\comp}\rightarrow H^1_{\loc}$, and its image is $\lambda$-outgoing.

The resolvent set $\Lambda$ is defined as the set of poles of $R_V(\lambda)$. Since
$$
\bigl(I-K(\lambda)(1-\rho)\bigr)\bigl(I+K(\lambda)(1-\rho)\bigr)=I\,,
$$
the preceding arguments show that $\Lambda$ agrees with the poles of $(I+VG(\lambda))^{-1}$, which by the Fredholm property agrees with the set of $\Lambda$ for which $(I+VG(\lambda))$ has nontrivial kernel.
If $\|G(\lambda)\|_{L^2\rightarrow L^2}< C^{-1}$ where $C=\|V\|_{L^2\rightarrow L^2}$, then $I+VG(\lambda)$ is invertible by Neumann series. By Theorem \ref{thm:optimal}, when $\Im\lambda<0$ this is the case provided that (for a different $C$)
$$
|\Im\lambda|\le \LGamma^{-1}\bigl(a\ln|\lambda|-\ln C-b\ln(\ln|\lambda|)\bigr)\,,
$$
which completes the proof of Theorem \ref{thm:resFreeEasy}.

We now observe that if $f$ solves $(I+VG(\lambda))f=0$ with $f\in L^2(\Gamma)$, then
\begin{equation}\label{uintegralrep}
u=R_0(\lambda)\bigl(f\delta_\Gamma\bigr)
\end{equation}
is a $\lambda$-outgoing solution to $-\Delta_{V,\Gamma}u=\lambda^2 u$.
Indeed, $u$ is $\lambda$-outgoing by definition, $u\in H^1_{\loc}$, and $(-\Delta-\lambda^2)u=f\delta_\Gamma\,.$ On the other hand,
$(V\otimes\delta_\Gamma)u=\bigl(VG(\lambda)f\bigr)\delta_\Gamma=-f\delta_{\Gamma}$.

To see that all such solutions arise this way we use the following extension of the Rellich uniqueness theorem, that there are no global $\lambda$-outgoing solutions to $(-\Delta-\lambda^2)u=0$. 
To prove this, note that for $0<\arg\lambda< \pi$ and $g$ a compactly supported distribution , $R_0(\lambda)g$ is exponentially decreasing in $|x|$, so  Green's identities yield, for $u=R_0(\lambda) g$ and for $R\gg 0$, that
$$
u(x)=
\int_{|y|=R}\Bigl(
G_0(\lambda,x,y)\,\partial_{\nu'}u(y)-
\partial_{\nu'_y}G_0(\lambda,x,y)u(y)
\Bigr)\,d\sigma(y)\,,\qquad |x|>R\,.
$$
By analytic continuation this holds for all $\lambda$. If $u$ is an entire solution then the right hand side is real-analytic in $R$, and we may let $R\rightarrow 0$ to deduce that $u\equiv 0$.

Suppose that $u\in H^1_{\loc}$ is a  $\lambda$-outgoing solution to $-\Delta_{V,\Gamma}u=\lambda^2 u$. By the uniqueness theorem it follows that 
\begin{equation}\label{eqn:unique}
u=-R_0(\lambda)(V\otimes\delta_\Gamma)u
=-\int_\Gamma G_0(\lambda,x,y)\,(Vu)(y)\,.
\end{equation}
Hence if
$f=Vu|_\Gamma$, then $f+VG(\lambda)f=0\,.$ By \eqref{eqn:unique} the correspondence between $u$ and $Vu|_\Gamma$ is one-to-one. It follows that the corresponding space of solutions $u$ for any $\lambda$ is finite dimensional, since it is in one-to-one correspondence with the kernel of a Fredholm operator.

Suppose now that $\Gamma=\partial\Omega$ for a compact domain $\Omega\subset\re^d$ with $C^{1,1}$ boundary. Assume also that $V\,:\,H^{\frac 12}(\partial\Omega)\rightarrow H^{\frac 12}(\partial\Omega)$. Then the analysis leading to \eqref{eqn:domainChar} shows that $u=u_1\oplus u_2$ satisfies the transmission problem \eqref{eqn:main}. Conversely, suppose $u=u_1\oplus u_2$ belongs to $\mcal E_2$ and satisfies \eqref{eqn:main}. For $w\in C_c^\infty(\re^d)$, Green's identities yield
$$
\int_{\re^d} u\,(-\Delta-\lambda^2)w=\int_{\partial\Omega}(\partial_\nu u+\partial_{\nu'}u)\,w=
-\int_{\partial\Omega}(Vu)\,w\,.
$$
Hence $u$ is a $\lambda$-outgoing $H^1_{\loc}$ distributional solution to 
$(-\Delta-\lambda^2)u+(V\otimes\delta_{\partial\Omega})u=0$, and by the above $\lambda$ is a resonance.

%%%%%%%%%%%%%%%%%%%%%%%%%%%%%%%%%%%%%

\section{Resonance Expansion for the Wave Equation}\label{sec:resExpand}

In this section we prove Theorems \ref{thm:resExpand} and \ref{thm:higherorder}. Let $\Lambda$ denote the set of resonances; since we work in odd dimensions $\Lambda$ is a discrete subset of $\complex$.
The elements of $\Lambda$ such that $\Im\lambda>0$ consist of $i\mu_j$ where $-\mu_j^2$ are the non-zero eigenvalues of $-\Delta_{V,\Gamma}$. That there are only a finite number of eigenvalues follows by relative compactness of $V\otimes\delta_\Gamma$ with respect to $-\Delta$. The resolvent near $i\mu_j$ takes the form
$$
R_V(\lambda)\;=\;\frac{-\Pi_{\mu_j}}{\lambda^2+\mu_j^2}+\text{holomorphic}\;=\;
\frac{i\,\Pi_{\mu_j}}{2\mu_j(\lambda-i\mu_j)}+\text{holomorphic}
\,,
$$
where $\Pi_{\mu_j}$ is projection onto the $-\mu_j^2$-eigenspace of $-\Delta_{V,\Gamma}$.
In particular we note that
\begin{equation}\label{eqn:mujresid}
\text{Res}\bigl(e^{-it\lambda}R_V(\lambda),i\mu_j\bigr)=i(2\mu_j)^{-1}e^{t\mu_j}\,\Pi_{\mu_j}\,.
\end{equation}

In dimension $d=1$, if $0\in\Lambda$ it is not an eigenvalue, whereas for $d\ge 5$ the corresponding solutions to \eqref{eqn:mainDelta} for $\lambda=0$ must be square-integrable. For $d=3$, if $0\in \Lambda$ there may be square-integrable and/or non square-integrable solutions to \eqref{eqn:mainDelta}, depending on whether the corresponding $f=Vu|_\Gamma$ in \eqref{uintegralrep} has vanishing integral.

For $|\lambda| \ll 1$ and $\Im\lambda>0$, the spectral bound $\|R_V(\lambda)\|_{L^2\rightarrow L^2}\le C (|\lambda|\Im\lambda\bigr)^{-1}$ shows that
$$
R_V(\lambda)=-\frac{\Pi_0}{\lambda^2}+\frac {i\mcal P_0}{\lambda}+\text{holomorphic}\,,
$$
where by inspection $\Pi_0$ is projection onto the $0$-eigenspace of $-\Delta_{V,\Gamma}$.
Since $R_V^*(-\overline\lambda)=R_V(\lambda)$ for $\Im\lambda>0$, it follows that $\mcal P_0$ is a symmetric map of $L^2_{\comp}$ to solutions of \eqref{eqn:mainDelta} with $\lambda=0$.
Hence,
\begin{equation}\label{eqn:0resid}
\text{Res}\bigl(e^{-it\lambda}R_V(\lambda),0\bigr)=
it\,\Pi_0+i\mcal P_0\,.
\end{equation}

In contrast to the case of $V\in L^\infty_{\comp}$, there may be resonances $\lambda\in \re\setminus\{0\}$. For an example in one dimension of $V$ and $\Gamma$ with such resonances consider $\Gamma=\left\{-\tfrac{\pi}{2},0,\tfrac{\pi}{2}\right\}$, and $V$ given by
$$(Vu|_{\Gamma})(x)=\begin{cases}u(0)\,,&x=\pm \frac{\pi}{2}\\
u(\tfrac \pi 2)+u(-\tfrac \pi 2)\,,&x=0
\end{cases}
$$
Then the function
$$u(x)=\begin{cases}\cos(x)\,,&|x|\leq \tfrac{\pi}{2}\\
0\,,&|x|\ge \tfrac{\pi}{2}
\end{cases}
$$
is a resonant state, that is an outgoing solution to \eqref{eqn:mainDelta}, for $\lambda=\pm 1$.

In fact, all resonances in $\re\setminus\{0\}$ must correspond to compactly supported eigenfunctions of $-\Delta_{V,\Gamma}$. To see this, suppose that $\lambda\in\re\setminus\{0\}$ and let $u\in\mcal D_{\loc}$ solve $-\Delta_{V,\Gamma}u=\lambda^2 u$. For $R\gg 0$, writing
$$
0=\int_{|x|\le R}\overline u\,(-\Delta u+(V\otimes\delta_{\Gamma})u-\lambda^2u)=
\int_{|x|\le R} \bigl(\,|\nabla u|^2-\lambda^2 |u|^2\bigr)+\int_{|x|=R}\overline u\,\partial_\nu u+\int_\Gamma \overline u \,Vu
$$
shows that $\Im\int_{|x|=R}\overline u\,\partial_\nu u=0$. The proof of Proposition 1.1 and Lemma 1.2 of \cite[Chapter 9]{Taylor} then show that $u\equiv 0$ on $|x|\ge R_0$, hence by analytic continuation $u$ vanishes on the unbounded component of $\re^d\setminus\Gamma$.

We note that if $\Gamma$ coincides with the boundary of the unbounded component of $\re^d\setminus\Gamma$ then there are no resonances $\lambda\in \re\setminus\{0\}$, since $0=u|_\Gamma$ implies that $(V\otimes\delta_{\Gamma})u=0$. Hence $u$ is a compactly supported eigenfunction of $-\Delta$ on $\re^d$, and must vanish identically.

The resonances in $\re\setminus\{0\}$ form a finite set by Theorem \ref{thm:resFreeEasy}, where $\lambda\in\re\setminus \{0\}$ is a resonance if $\lambda^2$ is an eigenvalue. The real resonances are thus symmetric about 0. We indicate them by $\pm\nu_j$, with $\nu_j>0$. By inspection, for $\Im\lambda>0$ near $\pm\nu_j$ we have
$$
R_V(\lambda)\;=\;\frac{-\Pi_{\nu_j}}{\lambda^2+\nu_j^2}+\text{holomorphic}\;=\;
\frac{\mp \Pi_{\nu_j}}{2\nu_j(\lambda\mp\nu_j)}+\text{holomorphic}
\,,
$$
where $\Pi_{\nu_j}$ is projection onto the $\nu_j^2$ eigenspace, hence
\begin{equation}\label{eqn:realresid}
\text{Res}\bigl(e^{-it\lambda}R_V(\lambda),\pm\nu_j\bigr)=
\mp (2\nu_j)^{-1}e^{\mp it\nu_j}\,\Pi_{\nu_j}\,.
\end{equation}

The remaining resonances form a discrete set $\{\lambda_j\}\subset\{\Im\lambda<0\}$, %symmetric about the imaginary axis,
with respective multiplicity $m_R(\lambda_j)$.
Since $\lambda_j\ne 0$, the Laurent expansion of $R_V(\lambda)$ about 
$\lambda_j$ can be written in the following form
$$
R_V(\lambda)\;=\;i\!\!\sum_{k=1}^{m_R(\lambda_j)}
\frac{(-\Delta_{V,\Omega}-\lambda_j^2)^{k-1}\mcal P_{\lambda_j}}{(\lambda^2-\lambda_j^2)^k}
+\text{holomorphic}\,.
$$
Here
$\mcal P_{\lambda_j}\,:\,L^2_{\comp}\rightarrow\Dloc$ is given by
$$
\mcal P_{\lambda_j}=-\frac{1}{2\pi}\oint_{\lambda_j} R_V(\lambda)\,2\lambda\,d\lambda\,,
$$
and $(-\Delta_{V,\Omega}-\lambda_j^2)^{m_R(\lambda_j)}\mcal P_{\lambda_j}=0\,.$ We can thus write
\begin{equation}\label{eqn:jresid}
\text{Res}\bigl(e^{-it\lambda}R_V(\lambda),\lambda_j\bigr)\;=\;
i\!\!\!\!\!
\sum_{k=0}^{m_R(\lambda_j)-1}t^k\,e^{-it\lambda_j}\,\mcal P_{\lambda_j,k}
\end{equation}
where $\mcal P_{\lambda_j,k}\,:\,L^2_{\comp}\rightarrow\Dloc$. 
When $k=m_R(\lambda_j)-1$, $\mcal{P}_{\lambda_j,k}g$ is $\lambda_j$-outgoing, as seen by writing the Laurent expansion of $R_V(\lambda)$ in terms of that for $(I+K(\lambda)\rho)^{-1}$.
In particular, if $m_R(\lambda_j)=1$, then $\text{Res}\bigl(e^{-it\lambda}R_V(\lambda),\lambda_j\bigr)=i(2\lambda_j)^{-1}e^{-it\lambda_j}\,\mcal P_{\lambda_j}$, where $\mcal P_{\lambda_j}$ maps $L^2_{\comp}$ to $\lambda_j$-outgoing solutions of $(-\Delta_{V,\Omega}-\lambda_j^2)u=0\,.$ %and $\mcal P_{\lambda_j}^*=\mcal P_{-\overline{\lambda_j}}\,.$

%%%%%%%%%%%%%%%%%%%

\subsection{Resolvent Estimates}
We first establish bounds on the cutoff of $R_V(\lambda)$, for $\lambda$ in the resonance free region established in Section \ref{sec:equivalence}.

\begin{lemma}
\label{lem:resolventEstimate}
Suppose that $\Gamma$ is a finite union of compact subsets of $C^{1,1}$ hypersurfaces.
Then for all $\e>0$ there exists $R<\infty$, so that if $\chi\in C_c^\infty(\re^d)$ equals $1$ on a neighborhood of $\Gamma$,  $\Re\lambda>R$, and
$\Im \lambda\ge - (\tfrac{1}{2}\LGamma^{-1}-\e)\log(\Re \lambda)$, 
then
\begin{align*}
\|\chi R_V(\lambda)\chi g\|_{L^2}&\;\le\;
C\,\la\lambda\ra^{-1}\,e^{2\Lchi(\Im \lambda)_-}\|g\|_{L^2}\,,\\
\|\chi R_V(\lambda)\chi g\|_{H^1}\!&\;\le\; 
C\,e^{2\Lchi(\Im \lambda)_-}\|g\|_{L^2}\,,\\
\|\chi R_V(\lambda)\chi g\|_{\mcal D}\,&\;\le\; 
C\,\la\lambda\ra\,e^{2\Lchi(\Im \lambda)_-}\|g\|_{L^2}\,,\rule{0pt}{13pt}
\end{align*}
where 
$\Lchi=\mathrm{diam}(\supp\chi)$, and $R_V(\lambda)$ is the meromorphic continuation of
$(-\Delta_{V,\Gamma}-\lambda^2)^{-1}$ from $\Im\lambda\gg 0$. If $\Im\lambda\ge 1$, $|\!\Re\lambda|>R$, then the estimates
hold with $\chi\equiv 1$, setting $\Lchi(\Im\lambda)_-=0$.
\end{lemma}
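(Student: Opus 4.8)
The plan is to derive the three estimates from the resolvent identity \eqref{eqn:resolventform}, together with the restriction bounds of Theorems \ref{thm:optimal} and \ref{thm:optimalLargeIm} on $G(\lambda)$ and the mapping properties of $R_0(\lambda)$ on compactly supported data. First I would recall from Section \ref{sec:equivalence} that, in the resonance free region under consideration, Theorem \ref{thm:optimal} gives $\|G(\lambda)\|_{L^2(\Gamma)\to L^2(\Gamma)}\le C\la\lambda\ra^{-1/2}\log\la\lambda\ra\,e^{\LGamma(\Im\lambda)_-}$; and since on this region $e^{\LGamma(\Im\lambda)_-}\le\la\lambda\ra^{1/2-\e\LGamma}$, the product $\|V\|\,\|G(\lambda)\|$ is less than $\tfrac12$ once $\Re\lambda>R$, so $(I+VG(\lambda))^{-1}$ exists with norm $\le 2$ on $L^2(\Gamma)$. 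This is what makes the right-hand side of \eqref{eqn:resolventform} well defined and bounded; for $\Im\lambda\ge 1$ one uses Theorem \ref{thm:optimalLargeIm} instead and drops all exponential factors.

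Next I would estimate each factor on the right-hand side of
$\chi R_V(\lambda)\chi g=\chi R_0(\lambda)\chi g-\chi R_0(\lambda)\gamma^*(I+VG(\lambda))^{-1}V\gamma R_0(\lambda)\chi g$
in the appropriate norms. For the $L^2\to L^2$ bound: the kernel bounds \eqref{R0bounds} and the Schur test give $\|\chi R_0(\lambda)\chi\|_{L^2\to L^2}\le C\la\lambda\ra^{-1}e^{2\Lchi(\Im\lambda)_-}$, since the $|x-y|^{2-d}$ singularity is integrable in $\re^d$ and the exponential factor $e^{-\Im\lambda|x-y|}$ is bounded by $e^{2\Lchi(\Im\lambda)_-}$ on $\supp\chi$ (the factor $2$ absorbs the trace map passing through $\Gamma$, whose extent is $\le\Lchi$). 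For the correction term I would chain: $\gamma R_0(\lambda)\chi:L^2\to L^2(\Gamma)$ is bounded by $C\la\lambda\ra^{-1/2}e^{\Lchi(\Im\lambda)_-}$ (again Schur, now with one endpoint restricted to the hypersurface, costing only half a power of $\la\lambda\ra$ as in the $d=2$ argument and Section \ref{sec:upperhalfplane}); then $V$ and $(I+VG(\lambda))^{-1}$ are bounded on $L^2(\Gamma)$; then $\chi R_0(\lambda)\gamma^*:L^2(\Gamma)\to L^2$ is the adjoint of the previous, with the same bound. Multiplying gives $C\la\lambda\ra^{-1}e^{2\Lchi(\Im\lambda)_-}$, matching the main term. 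The $H^1$ bound follows because $\chi R_0(\lambda)$ gains one derivative up to a loss of $\la\lambda\ra$, i.e. $\|\chi R_0(\lambda)h\|_{H^1}\le C\la\lambda\ra\|\chi R_0(\lambda)h\|_{L^2}+\dots$; more cleanly, $\|\nabla\chi R_0(\lambda)h\|_{L^2}^2\le C(\la\lambda\ra^2\|\chi R_0(\lambda)h\|_{L^2}^2+\|h\|_{L^2}^2)$ by integrating $-\Delta R_0(\lambda)h=h+\lambda^2 R_0(\lambda)h$ against $\chi^2 R_0(\lambda)h$, which costs exactly the factor $\la\lambda\ra$ relative to the $L^2$ bound. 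The same applies to $\chi R_0(\lambda)\gamma^*$ acting on $L^2(\Gamma)$. Finally, the $\mcal D$ bound: by the characterization of $\mcal D$ via \eqref{eqn:LaplaceDistributional}, $\|\chi R_V(\lambda)\chi g\|_{\mcal D}\simeq\|\chi R_V(\lambda)\chi g\|_{H^1}+\|\Delta_{V,\Gamma}\chi R_V(\lambda)\chi g\|_{L^2}$, and since $R_V(\lambda)\chi g$ solves $(-\Delta_{V,\Gamma}-\lambda^2)R_V(\lambda)\chi g=\chi g$ away from where $\nabla\chi$ lives, a commutator computation gives $\Delta_{V,\Gamma}(\chi R_V(\lambda)\chi g)=-\chi^2 g-\lambda^2\chi R_V(\lambda)\chi g+[\Delta,\chi]R_V(\lambda)\chi g$; the first two terms are bounded in $L^2$ by $C\la\lambda\ra^2\cdot\la\lambda\ra^{-1}e^{2\Lchi(\Im\lambda)_-}\|g\|_{L^2}$, and the commutator term is a first-order operator in $\nabla\chi$ applied to $R_V(\lambda)\chi g$ on a slightly larger cutoff, controlled by the already-established $H^1$ estimate on that larger cutoff. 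Hence $\|\chi R_V(\lambda)\chi g\|_{\mcal D}\le C\la\lambda\ra\,e^{2\Lchi(\Im\lambda)_-}\|g\|_{L^2}$.

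The main obstacle, and the step requiring the most care, is bookkeeping the exponential weight $e^{2\Lchi(\Im\lambda)_-}$ consistently through the chain: each of the three appearances of $R_0(\lambda)$ (two in the correction term, one in the main term) contributes a factor $e^{-\Im\lambda|x-y|}$, and one must check that the cumulative exponent never exceeds $2\Lchi(\Im\lambda)_-$ — this works because $\gamma R_0(\lambda)\chi$ and $\chi R_0(\lambda)\gamma^*$ each contribute at most $e^{\Lchi(\Im\lambda)_-}$ (the separation between $\Gamma$ and $\supp\chi$ being $\le\Lchi$) while $(I+VG(\lambda))^{-1}$, bounded by $2$, contributes nothing further, so the product is $e^{2\Lchi(\Im\lambda)_-}$ and not $e^{3\Lchi(\Im\lambda)_-}$. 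One must also verify that the invertibility of $I+VG(\lambda)$ persists up to the stated edge of the resonance free region — this is exactly where the margin $\e$ in the hypothesis $\Im\lambda\ge-(\tfrac12\LGamma^{-1}-\e)\log\Re\lambda$ is used, guaranteeing $\|V\|\,\|G(\lambda)\|<\tfrac12$ for $\Re\lambda>R(\e)$. For the $\Im\lambda\ge 1$ case everything simplifies: Theorem \ref{thm:optimalLargeIm} gives $\|G(\lambda)\|\le C\la\Im\lambda\ra^{-1}$, all exponential weights are $1$, and the cutoffs may be removed since $R_0(\lambda)$ is globally bounded $L^2\to L^2$ by $\la\lambda\ra^{-1}$ there.
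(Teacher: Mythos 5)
Your overall architecture is the same as the paper's: you start from the identity \eqref{eqn:resolventform}, invert $I+VG(\lambda)$ by a Neumann series using Theorem \ref{thm:optimal} (Theorem \ref{thm:optimalLargeIm} when $\Im\lambda\ge 1$), chain restriction and extension bounds for the free resolvent through that identity, and close the $\mcal D$ estimate with the same commutator computation $\Delta_{V,\Gamma}(\chi u)=-\chi^2 g+2\nabla\chi\cdot\nabla u+(\Delta\chi)u-\lambda^2\chi u$ that the paper uses; your bookkeeping of the factor $e^{2\Lchi(\Im\lambda)_-}$ is also consistent with the paper's. The genuine gap is in how you justify the free-resolvent building blocks: you claim that the pointwise kernel bounds \eqref{R0bounds} plus the Schur test give $\|\chi R_0(\lambda)\chi\|_{L^2\to L^2}\le C\la\lambda\ra^{-1}e^{2\Lchi(\Im\lambda)_-}$ and $\|\gamma R_0(\lambda)\chi\|_{L^2\to L^2(\Gamma)}\le C\la\lambda\ra^{-1/2}e^{\Lchi(\Im\lambda)_-}$, and the whole chain rests on these.

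The Schur test does not yield those powers. On $|x-y|\ge|\lambda|^{-1}$ the kernel has size $|\lambda|^{\frac{d-3}2}|x-y|^{\frac{1-d}2}$, so the Schur integral over a bounded region of $\re^d$, or over $\Gamma$, is of order $\la\lambda\ra^{\frac{d-3}2}$: for $d\ge 2$ this is strictly weaker than $\la\lambda\ra^{-1}$, and for $d\ge 3$ weaker than $\la\lambda\ra^{-1/2}$ (your restriction claim comes out right only in $d=2$, which is precisely why the elementary argument of Section \ref{sec:twodimensions} is confined to that dimension). The $\la\lambda\ra^{-1}$ decay of the cutoff free resolvent encodes oscillation/nontrapping, not kernel size, and has to be imported: the paper quotes $\|\chi R_0(\lambda)\chi\|_{H^s\to H^t}\le C\la\lambda\ra^{t-s-1}e^{\Lchi(\Im\lambda)_-}$ from \cite[Chapter 3]{ZwScat}, deduces the restriction bound \eqref{eqn:traceest2} from it via the trace interpolation inequality $\|\gamma g\|_{L^2(\Gamma)}\le C\,\|g\|_{H^t}^{\theta}\|g\|_{H^{t'}}^{1-\theta}$, and obtains the extension bound \eqref{eqn:extest2} by duality. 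If you substitute these inputs for your Schur-test claims, the rest of your argument — the Neumann-series invertibility in the stated region, the chaining, your Caccioppoli-style derivation of the $H^1$ gain (an acceptable substitute for quoting the $L^2\to H^1$ cutoff bound), and the $\mcal D$ step with a slightly enlarged cutoff — goes through and essentially reproduces the paper's proof.
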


\noindent{\bf Remark:} The region in which this estimate is valid can be improved by replacing $\frac 12$ by $\frac 23$ if the components of $\Gamma$ are subsets of strictly convex $C^{2,1}$ hypersurfaces.

\medskip

\begin{proof}
We recall the Sobolev estimates for the cutoff of the free resolvent, see e.g. \cite[Chapter 3]{ZwScat}
\begin{equation*}%\label{eqn:freeresolvent}
\|\chi R_0(\lambda)\chi\|_{H^s\rightarrow H^t}\le C\la\lambda\ra^{t-s-1}e^{\Lchi(\Im\lambda)_-}\,,\qquad s\le t\le s+2\,.
\end{equation*}
In addition, when $\Im\lambda\ge 1$ these estimates hold globally, that is with $\chi\equiv 1$.

This in turn leads to the following restriction estimates
\begin{equation}\label{eqn:traceest2}
\begin{split}
\|\gamma R_0(\lambda)\chi g\|_{L^2(\Gamma)}&\le
C\,\la\lambda\ra^{-s-\frac 12}e^{\Lchi(\Im\lambda)_-}\|g\|_{H^s}\,, 
\quad -\tfrac 32<s<\tfrac 12\,,\\
\|\gamma \nabla R_0(\lambda)\chi g\|_{L^2(\Gamma)}&\le
C\,\la\lambda\ra^{-s+\frac 12}e^{\Lchi(\Im\lambda)_-}\|g\|_{H^s}\,, 
\quad -\tfrac 12<s<\tfrac 32\,.
\rule{0pt}{17pt}
\end{split}
\end{equation}
To prove \eqref{eqn:traceest2} we use the following interpolation bound separately on each component of $\Gamma$,
$$
\|\gamma g\|_{L^2(\Gamma)}\le C_{t,t'}\,\|g\|_{H^t}^{\theta}\;\|g\|_{H^{t'}}^{1-\theta}\,,\qquad 0\le t<\tfrac 12<t'\,,\quad \theta(t-\tfrac 12)+(1-\theta)(t'-\tfrac 12)=0\,.
$$
By duality
we have the following extension estimate, 
\begin{equation}\label{eqn:extest2}
\|\chi R_0(\lambda)\gamma^* f\|_{H^s}\le 
C\,\la\lambda\ra^{s-\frac 12}\,e^{\Lchi(\Im\lambda)_-}\|f\|_{L^2(\Gamma)}\,,
\quad -\tfrac 12 <s < \tfrac 32\,.
\end{equation}
By restriction, note that \eqref{eqn:extest2} implies
\begin{equation}\label{eqn:GHhalfbound}
\|G(\lambda)f\|_{H^{1/2}(\Gamma)}\le
C_\e\,\la\lambda\ra^{\frac 12}\,e^{(\LGamma+\e)(\Im\lambda)_-}\|f\|_{L^2(\Gamma)}\,, \quad\e>0\,,
\end{equation}
where the norm on the left is the sum of the $H^{\frac 12}$ norms on the distinct $C^{1,1}$ components of $\Gamma$.

Now fix $g\in L^2(\re^d)$, set
$u=R_V(\lambda)\chi g\,.$  Then by \eqref{eqn:resolventform} we have $u=R_0(\lambda)\chi g-w$, where
$$
w=R_0(\lambda)\gamma^*(I+VG(\lambda))^{-1}V\gamma R_0(\lambda)\chi g\,.
$$
By Theorem \ref{thm:optimal}, for $|\!\Re \lambda|$ large enough and 
$\Im \lambda \geq -(\tfrac{1}{2}\LOmega^{-1}-\e)\log (|\!\Re\lambda|)$, the operator $I+VG(\lambda)$ is invertible on $L^2(\Gamma)$, and 
we have
$$
\|(I+VG(\lambda))^{-1}\|_{L^2(\Gamma)\to L^2(\Gamma)}\leq C\,,\qquad
\|VG(\lambda)\|_{L^2(\Gamma)\to L^2(\Gamma)}< 1\,.
$$

Thus, for $-\frac 32< s<\frac 12\,,$
\begin{equation*}
%\label{eqn:whalfbound1}
\| (I+VG(\lambda))^{-1}V\gamma R_0(\lambda)\chi g  \|_{L^2(\Gamma)}
\le C\, \la \lambda\ra ^{-s-\frac 12}\,e^{\Lchi(\Im\lambda)_-}\|g\|_{H^s}\,.
\end{equation*}
Then \eqref{eqn:extest2} gives the following, for $-\frac 32< s<\frac 12\,,$ and with global bounds if $\Im\lambda\ge 1$,
\begin{align}
\|\chi w\|_{L^2}&\leq C\,\la \lambda\ra^{-s-1}e^{2\Lchi(\Im \lambda)_-}\|g\|_{H^s}\,,
\label{eqn:L2wbound}
\\
\|\chi w\|_{H^1}
&\leq C\,\la\lambda\ra^{-s}\,e^{2\Lchi(\Im \lambda)_-}\|g\|_{H^s}\rule{0pt}{13pt}\,.
\label{eqn:H1wbound}
\end{align}
By the $L^2\rightarrow H^t$ bounds for $\chi R_0(\lambda)\chi$ the same holds for $s=0$ with $w$ replaced by $u$, which yields the bounds of Lemma \ref{lem:resolventEstimate} except for the ones on $\|\chi u\|_{\mcal D}$.

To obtain bounds on $\|\chi u\|_{\mcal D}\,,$ we write
$$
\Delta(\chi u) =
-\chi g+2(\nabla\chi)\cdot\nabla u+(\Delta\chi)u-\lambda^2\chi u+(V\otimes\delta_{\Gamma}) u\,,
$$
and note by \eqref{eqn:L2wbound} and \eqref{eqn:H1wbound} that
$$
\|(\nabla\chi)\cdot\nabla u\|_{L^2}+\|(\Delta \chi)u\|_{L^2}+\la\lambda\ra^2\|\chi u\|_{L^2}\le C\,\la\lambda\ra\, e^{2\Lchi(\Im \lambda)_-}\|g\|_{L^2}\,.
$$
Consequently,
$$
\|\Delta_{V,\Gamma}(\chi u)\|_{L^2}\le C\,\la\lambda\ra\, e^{2\Lchi(\Im \lambda)_-}\|g\|_{L^2}\,,
$$
yielding the desired bound on $\|\chi u\|_{\mcal D}$.
\end{proof}

%%%%%%%%%%%%%%%%%%%%%

\subsection{Proof of Theorem \ref{thm:resExpand}}

We prove here the case $N=1$ of Theorem \ref{thm:resExpand}; the case $N\ge 2$ will be handled following the proof of Theorem \ref{thm:higherorder}.
We follow the treatment in \cite{TangZw}, and suppose that $g\in H^s$ for some $0<s<\frac 12$ and proceed by density in $L^2$.
As above write
\begin{align*}
R_V(\lambda) \chi g=w+R_0(\lambda) \chi g\,.
\end{align*}

Choose $\alpha\ge 1$ so that $\mu_j<\alpha$ for all $j$, where $-\mu_j^2$ are the negative eigenvalues of $-\Delta_{V,\Gamma}$. By the spectral theorem we can write
\begin{align}
U(t)  \chi g&=\frac{1}{2\pi}\int_{-\infty+i\alpha}^{\infty+i\alpha }e^{-it\lambda} R_V(\lambda)\chi g\, d\lambda\nonumber\\
&=\frac{1}{2\pi}\int_{-\infty+i\alpha}^{\infty+i\alpha }e^{-it\lambda} \bigl( w+R_0(\lambda)\chi g\bigr)\, d\lambda\,.\rule{0pt}{20pt}\label{eqn:integralConv}
\end{align}
The integral is norm convergent in $L^2(\re^d)$, by \eqref{eqn:L2wbound} and the norm convergence of the free resolvent integral.
After localizing by $\chi$ on the left, for $t$ sufficiently large we seek to deform the contour $\re+i\alpha$ to 
$$
\Sigma_A=\bigl\{\lambda\in \complex: \Im \lambda = -A-c \log\bigl(2+ |\!\Re \lambda|\,\bigr)\bigr\}
$$
where we choose $c<\frac 12\LGamma^{-1}$, and assume $A$ is such that there are no resonances on $\Sigma_A$.  
We will show that the integral over $\Sigma_A$ is norm convergent for $g\in H^s$ if $s>0$, so to justify the contour change we need to show that for $t$ sufficiently large the integrals over 
$$
\gamma_{\pm R}(v)=\left\{\pm R+iv\,:\,-\bigl(A+c\log(2+R)\bigr)\le v\le \alpha\right\}\,,\quad\text{and}\quad\gamma_{R,\infty}=\left\{x+i\alpha\,:\,|x|\geq R\right\}
$$
tend to $0$ as $R\rightarrow\infty$. Note that for $R$ large enough, Theorem \ref{thm:resFreeEasy} shows that there are no resonances between 
$\re+i\alpha$ and $\Sigma_A$ with
$|\!\Re\lambda|\ge R$, and hence none on $\gamma_{\pm R}$.

We introduce the following notation,
$$
E_\gamma(t)f=\frac{1}{2\pi}\int_{\gamma}e^{-it\lambda} R_V(\lambda) f \,d\lambda\,.
$$
Then for $t>2\Lchi$, and $R$ large enough,
$$
\|\chi E_{\gamma_{\pm R}}(t)\chi g\|_{L^2}\leq C\,e^{\alpha t}\la R\ra^{-1}\bigl(\alpha+A+c\log(2+ R)\bigr)\|g\|_{L^2}\to 0\quad\text{as}\quad
R\rightarrow\infty\,.
$$ 
The norm convergence of \eqref{eqn:integralConv} shows that $\|\chi E_{\gamma_{R,\infty}}\chi g\|_{L^2}\to 0$ as $R\rightarrow \infty$. 
We then assume $c(t-2\Lchi)\ge 3$ and calculate
$$
\|\chi E_{\Sigma_A}(t)\chi g\|_{\mcal D}\leq C_{A,\chi}\,e^{-A(t-2\Lchi)}\int_{-\infty}^\infty 
e^{-3\log(2+|R|)}
\la A+|R|\,\ra\,dR \le C_{A,\chi}\,e^{-At}\,\|g\|_{L^2}\,.
$$
In particular the integral is norm convergent, and the contour deformation is allowed.

Thus, if we let $\Omega_A$ denote the collection of poles of $R_V(\lambda)$ in the set 
$\Im \lambda>-A-c \log\bigl(2+ |\!\Re \lambda|)$, then
$$
\chi U(t)\chi g=\chi E_{\Sigma_A}(t) \chi g-i\chi\sum_{z\in\Omega_A}
\text{Res}\bigl(e^{-it\lambda}R_V(\lambda),z)\chi g\,,
$$
and by density this holds for $g\in L^2(\re^d)$. Observe that if $g\in L^2_{\comp}$ then we can take $\chi=1$ on the support of $g$, and drop the cutoff $\chi$ to write a global equality in $L^2_{\loc}$.  To have estimates on the remainder in $\mcal D$, though, requires cutting off by $\chi$ and taking $t>2 \Lchi+C$, consistent with the propagation of singularities.
The expressions \eqref{eqn:mujresid}, \eqref{eqn:0resid}, \eqref{eqn:realresid}, and \eqref{eqn:jresid} now complete the proof of Theorem \ref{thm:resExpand} for $N=1$, where we observe that the terms from poles in $\Omega_A$ with $\Im\lambda\le -A$ can be absorbed into $E_A(t)$.
\qed

%%%%%%%%%%%%%%%%%%%%

\subsection{Higher order estimates for smooth domains}
We start with the following lemma, where we now assume that $\Gamma=\partial\Omega$ is $C^\infty$, and that $V\,:\,H^s(\partial\Omega)\rightarrow H^s(\partial\Omega)$ for all $s\ge 0$. Recall that we set 
$\mcal E_0=L^2(\re^d)$, and for $N\ge 1$,
$$
\mcal E_N= H^1(\re^d)\cap \bigl( H^N(\Omega)\oplus H^N(\re^d\setminus\overline\Omega)\bigr)\,.
$$
In this setting $\mcal D$ equals the subspace of $\mcal E_2$ satisfying $\partial_\nu u+\partial_{\nu'}u+Vu|_{\partial\Omega}=0\,.$

\begin{lemma}\label{lem:higherorder}
Suppose that $\partial\Omega$ is of regularity $C^\infty$, and $N\ge 0$.
Then for all $\e>0$ there exists $R<\infty$, so that if $|\!\Re\lambda|>R$,
$|\Im \lambda|\le (\tfrac{1}{2}\LOmega^{-1}-\e)\log(|\!\Re \lambda|)$, and $\chi\in C_c^\infty(\re^d)$ equals $1$ on a neighborhood of $\overline\Omega$, then
$$
\bigl\|\chi \bigl(R_V(\lambda)-R_V(-\lambda)\bigr)\chi g\bigr\|_{\mcal E_N} \le 
C_N\,\la\lambda\ra^{N-1}\,e^{2\Lchi(\Im \lambda)_-}\|g\|_{L^2}\,.
$$
\end{lemma}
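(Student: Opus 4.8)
The plan is to set $u=R_V(\lambda)\chi g$, $\tilde u=R_V(-\lambda)\chi g$, and study $v:=u-\tilde u$. Because $(-\lambda)^2=\lambda^2$, both $u$ and $\tilde u$ solve $(-\Delta_{V,\Gamma}-\lambda^2)\,\cdot\,=\chi g$, so the rough source cancels and
$$
(-\Delta_{V,\Gamma}-\lambda^2)v=0 ;
$$
equivalently $v$ is continuous across $\partial\Omega$, solves $(-\Delta-\lambda^2)v=0$ in $\Omega$ and in $\re^d\setminus\overline\Omega$, and satisfies $\partial_\nu v+\partial_{\nu'}v+Vv|_{\partial\Omega}=0$. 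First I would record the base cases: the hypotheses $|\!\Re\lambda|>R$ and $|\Im\lambda|\le(\tfrac12\LOmega^{-1}-\e)\log|\!\Re\lambda|$ put both $\lambda$ and $-\lambda$ in the region of Lemma \ref{lem:resolventEstimate} (whose estimates are symmetric under $\Re\lambda\mapsto-\Re\lambda$), so $\chi u,\chi\tilde u\in\mcal D$ and hence $\chi v\in\mcal D$ with
$$
\|\chi v\|_{L^2}\lesssim\la\lambda\ra^{-1}e^{2\Lchi(\Im\lambda)_-}\|g\|_{L^2},\qquad\|\chi v\|_{H^1}\lesssim e^{2\Lchi(\Im\lambda)_-}\|g\|_{L^2},\qquad\|\chi v\|_{\mcal D}\lesssim\la\lambda\ra\,e^{2\Lchi(\Im\lambda)_-}\|g\|_{L^2}.
$$
(On the contour $\Sigma_A$ below the real axis, where this lemma gets used, $(\Im(-\lambda))_-=0\le(\Im\lambda)_-=|\Im\lambda|$, so the weight from $R_V(-\lambda)$ is absorbed.) Since $\mcal D\subset\mcal E_2$ with a $\lambda$-independent bound, this already gives the lemma for $N=0,1,2$.

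For $N\ge 3$ the idea is to bootstrap on the homogeneous equation $\Delta_{V,\Gamma}v=\lambda^2 v$. I would fix nested cutoffs $\chi=\chi_0,\chi_1,\chi_2,\dots$, each equal to $1$ near $\overline\Omega$ with $\supp\chi_{k+1}\subset\{\chi_k=1\}$, and prove by induction that $\chi_k v\in\mcal D_k$ with $\|\chi_k v\|_{\mcal D_k}\le C_k\la\lambda\ra^{2k-1}e^{2\Lchi(\Im\lambda)_-}\|g\|_{L^2}$. The inductive step uses $\Delta_{V,\Gamma}(\chi_{k+1}v)=\lambda^2\chi_{k+1}v+[\Delta,\chi_{k+1}]v$: the commutator is supported in $\supp\nabla\chi_{k+1}$, which is disjoint from $\Gamma$, and there $v$ solves $(-\Delta-\lambda^2)v=0$, so interior elliptic regularity — each differentiation of such a solution costing one factor $\la\lambda\ra$ — bounds it in $H^{2k+1}$ by $\la\lambda\ra^{2k}\|\chi_k v\|_{H^1}$; combining with the inductive bound on $\|\lambda^2\chi_{k+1}v\|_{\mcal D_k}$ places $\Delta_{V,\Gamma}(\chi_{k+1}v)$ in $\mcal D_k$ with norm $\lesssim\la\lambda\ra^{2k+1}e^{2\Lchi(\Im\lambda)_-}\|g\|_{L^2}$, hence $\chi_{k+1}v\in\mcal D_{k+1}$. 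Since $\partial\Omega$ is $C^\infty$ and $V$ preserves every $H^s(\partial\Omega)$, iterating the transmission regularity used for $\mcal D\subset\mcal E_2$ (classical in the smooth case; cf.\ Lemma \ref{lem:transmission}) gives $\mcal D_k\subset\mcal E_{2k}$ with $\lambda$-independent constant, so $\|\chi_k v\|_{\mcal E_{2k}}\le C_k\la\lambda\ra^{2k-1}e^{2\Lchi(\Im\lambda)_-}\|g\|_{L^2}$. For $N$ strictly between $2k$ and $2k+2$ I would then interpolate: all $\mcal E_m$ with $m\ge1$ share the factor $H^1(\re^d)$, so $[\mcal E_{2k},\mcal E_{2k+2}]_\theta=\mcal E_{2k+2\theta}$, and with $\theta=\tfrac12(N-2k)$, applied to $\tilde\chi v$ for a cutoff $\tilde\chi$ nested inside $\chi_{k+1}$, this yields $\|\tilde\chi v\|_{\mcal E_N}\le C_N\la\lambda\ra^{(2k-1)(1-\theta)+(2k+1)\theta}e^{2\Lchi(\Im\lambda)_-}\|g\|_{L^2}=C_N\la\lambda\ra^{N-1}e^{2\Lchi(\Im\lambda)_-}\|g\|_{L^2}$. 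Together with $N=0,1$ from the first paragraph, this is the claim.

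The step I expect to be the main obstacle is keeping every constant $\lambda$-uniform with the \emph{sharp} power $\la\lambda\ra^{N-1}$: this requires that gaining one Sobolev derivative cost exactly one power of $\la\lambda\ra$, both in the interior estimate for $(-\Delta-\lambda^2)v=0$ and in the transmission regularity across $\partial\Omega$ — i.e.\ that $v$ behaves like a function with frequencies at scale $\la\lambda\ra$. I would obtain this either by rescaling $x\mapsto\lambda x$ (after which $\partial\Omega$ becomes an interface of curvature $O(\la\lambda\ra^{-1})$, nearly flat, and the norms scale transparently), or by tracking the $\lambda^2$ term directly through the standard elliptic boundary estimates. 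It is worth emphasizing that the subtraction of $R_V(-\lambda)$ is essential: for $R_V(\lambda)$ alone the source $\chi g\in L^2$ stalls the bootstrap at $\mcal E_2$, whereas $v$, solving the homogeneous transmission problem, is locally as regular as the $C^\infty$ interface permits.
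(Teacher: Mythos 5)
Your argument is correct in outline, but it takes a genuinely different route from the paper's. The paper proves the lemma by a direct induction in $\mcal E_N$, gaining one derivative per step: it writes $\Delta(\chi u)=2(\nabla\chi)\cdot\nabla u+(\Delta\chi)u-\lambda^2\chi u+(V\otimes\delta_{\partial\Omega})u$ for the difference $u=(R_V(\lambda)-R_V(-\lambda))\chi g$ (the $\chi g$ source cancels, exactly as you observe), bounds the bulk terms in $H^{N-1}$ and the trace density $Vu$ in $H^{N-\frac12}(\partial\Omega)$ by the induction hypothesis, and then applies the smooth layer-potential transmission regularity of Lemma \ref{lem:smoothtransmission} to land in $\mcal E_{N+1}$ with the extra power of $\la\lambda\ra$ coming only from the $\lambda^2\chi u$ term. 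You instead (i) run a commutator/interior-elliptic bootstrap to get $\mcal D_k$ bounds with $\la\lambda\ra^{2k-1}$ --- this is essentially verbatim the paper's separate closing lemma of Section 7, used there to finish Theorem \ref{thm:resExpand} for $N\ge2$; (ii) invoke the continuous embedding $\mcal D_k\subset\mcal E_{2k}$ for smooth $\partial\Omega$ and $V$ preserving all $H^s$; and (iii) interpolate for odd $N$, where the exponent bookkeeping indeed returns $\la\lambda\ra^{N-1}$ and where you only need the easy inclusion $[\mcal E_{2k},\mcal E_{2k+2}]_\theta\hookrightarrow\mcal E_N$, which follows from interpolating $H^m$ on $\Omega$ and on the exterior together with the fixed $H^1(\re^d)$ factor. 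What your approach buys is modularity (the $\lambda$-dependence is isolated in the $\mcal D_k$ bounds, the boundary regularity in a $\lambda$-free embedding); what it hides is that the embedding $\mcal D_k\subset\mcal E_{2k}$ is itself proved by exactly the kind of one-derivative-at-a-time induction the paper runs, since a single pass of the single-layer gain only improves the boundary term by $3/2$ derivatives (density $Vu\in H^{2k-\frac52}$ gives $H^{2k-1}$, not $H^{2k}$), so the ``classical'' citation conceals an iterated transmission-regularity argument rather than a one-line fact --- legitimate for $C^\infty$ interfaces, but it should be stated as such. Two small bookkeeping points: the commutator term is bounded in $H^{2k+1}$ by $\la\lambda\ra^{2k+1}$ (one derivative more than $v$), not $\la\lambda\ra^{2k}\|\chi_k v\|_{H^1}$ as written, though your final exponent $2k+1$ is the right one and the induction closes; and your nested cutoffs shrink inward, so to conclude for the given $\chi$ you must start from a slightly larger cutoff, which mildly enlarges the constant in the exponential weight --- the same looseness already present in the paper's own induction and harmless in the application on $\Sigma_A$.
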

\begin{proof}
We proceed by induction on $N$. By Lemma \ref{lem:resolventEstimate}, the result holds for $N=0,1,2.$ We assume then that the result is true for integers less than or equal to $N$.

Letting $u= \bigl(R_V(\lambda)-R_V(-\lambda)\bigr)\chi f$, we write
$$
\Delta (\chi u) = 
2(\nabla\chi)\cdot\nabla u +(\Delta\chi)u- \lambda^2\chi u+(V\otimes \delta_{\partial\Omega}) u\,.
$$
By the induction hypothesis,
\begin{align*}
\|(\Delta\chi) u\|_{H^{N-1}(\Omega)\oplus H^{N-1}(\re^d\setminus\overline\Omega)}+\|\chi u\|_{H^{N-1}(\Omega)\oplus H^{N-1}(\re^d\setminus\overline\Omega)}&\le C\,\la\lambda\ra^{N-2}\,e^{2\Lchi(\Im \lambda)_-}\|g\|_{L^2}\,,\\
\|(\nabla\chi)\cdot\nabla u\|_{H^{N-1}(\Omega)\oplus H^{N-1}(\re^d\setminus\overline\Omega)}+\|Vu\|_{H^{N-\frac 12}(\partial\Omega)}&\le C\,\la\lambda\ra^{N-1}\,e^{2\Lchi(\Im \lambda)_-}\|g\|_{L^2}\,. \rule{0pt}{15pt}
\end{align*}
Lemma \ref{lem:smoothtransmission} then gives the desired result for $\mcal E_{N+1}$.
\end{proof}

\begin{proof}[Proof of Theorem \ref{thm:higherorder}]
We use the notation from the proof of Theorem \ref{thm:resExpand} above.
We first note that
$$
\frac {1}{2\pi}\int_{\Sigma_A} e^{-it\lambda} R_V(-\lambda)\,d\lambda\;=\;-\!\!\!
\sum_{\mu_j>A+\log 2}(2\mu_j)^{-1}e^{-t\mu_j}\Pi_{\mu_j}\,,
$$
where the completion of the contour to the lower half plane is justified by Lemma \ref{lem:resolventEstimate} and the rapid decrease of $e^{-it\lambda}$ for $t>0$.
We thus can write
$$
\chi E_{\Sigma_A}(t)\chi g=
\frac 1{2\pi}\int_{\Sigma_A} e^{-it\lambda} \chi \bigl(R_V(\lambda)-R_V(-\lambda)\bigr)\chi g\,d\lambda\;-\!\!\!
\sum_{\mu_j>A+\log 2}(2\mu_j)^{-1}e^{-t\mu_j}\chi \Pi_{\mu_j}\chi g\,.
$$
Assume $c(t-2\Lchi)\ge N+1\,,$ the $\mcal E_N$ norm of the integral term is dominated by
$$
C_{A,\chi}\,e^{-A(t-2\Lchi)}\int_{-\infty}^\infty 
e^{-(N+1)\log(2+|R|)}
\la A+|R|\,\ra^{N-1}\,dR \le C_{A,\chi,N}\,e^{-At}\,\|g\|_{L^2}\,.
$$

It remains to show that if $\mu_j>A$, and if $\Im \lambda_j<-A$, then
$$
e^{-t\mu_j}\|\chi \Pi_{\mu_j}\chi g\|_{\mcal E_N} + \|\chi \text{Res}\bigl(e^{-it\lambda}R_V(\lambda),\lambda_j\bigr)\chi g\|_{\mcal E_N}\le
C_{A,\chi,N}\,e^{-tA}\,\|g\|_{L^2}\,,
$$ 
since the difference of $\chi E_A(t)\chi$ and $\chi E_{\Sigma_A}(t)\chi$ is a sum of such terms. 

A similar argument to the proof of Lemma \ref{lem:higherorder} gives the bound
$$
\|\Pi_{\mu_j}f\|_{\mcal E_N}\le C_N\,\la\mu_j\ra^{N}\|f\|_{L^2}\,,
$$
which handles the eigenvalues.
To handle the resonances in the lower half plane, consider first the case that $-\lambda_j$ is not a pole. We can then write
$$
\text{Res}\bigl(e^{-it\lambda}R_V(\lambda),\lambda_j\bigr)=
\frac 1{2\pi i}\oint_{\lambda_j} e^{-it\lambda}\bigl( R_V(\lambda)-R_V(-\lambda)\bigr)\,d\lambda\,,
$$
and the estimate follows from Lemma \ref{lem:higherorder}, by choosing a small contour about $\lambda_j$ which is contained in $\Im\lambda<-A$.
In the case that $-\lambda_j$ is a pole, hence an eigenvalue, then the term $R_V(-\lambda)$ contributes an eigenvalue projection, which is handled as above. 
\end{proof}

We now complete the proof of Theorem \ref{thm:resExpand} by considering the case $N\ge 2$.
Eigenfunctions clearly belong to $\mcal D_N$, and by an induction argument we have $\|\chi\Pi_{\mu_j}\chi g\|_{\mcal D_N}\le C_N\la\mu_j\ra^{2N}\|g\|_{L^2}$. The proof then follows from that of Theorem \ref{thm:higherorder}, using the following

\begin{lemma}
Suppose that $\Gamma$ is a finite union of $C^{1,1}$ hypersurfaces, and $N\ge 1$.
Then for all $\e>0$ there exists $R<\infty$ so that if  $|\!\Re\lambda|>R$,
$|\Im \lambda|\le (\tfrac{1}{2}\LGamma^{-1}-\e)\log(|\!\Re \lambda|)$, and $\chi\in C_c^\infty(\re^d)$ equals $1$ on a neighborhood of $\Gamma$, 
then
\begin{align*}
\bigl\|\chi \bigl(R_V(\lambda)-R_V(-\lambda)\bigr)\chi g\bigr\|_{\mcal D_N} &\le 
C\,\la\lambda\ra^{2N-1}\,e^{2\Lchi(\Im \lambda)_-}\|g\|_{L^2}\,.
\end{align*}
\end{lemma}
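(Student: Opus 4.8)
The plan is to argue by induction on $N$, following the scheme of Lemma~\ref{lem:higherorder} but replacing $\mcal E_N$ by $\mcal D_N$ and the boundary transmission estimate (Lemma~\ref{lem:smoothtransmission}) by interior elliptic regularity on $\re^d\setminus\Gamma$ — which is all that the recursive definition of $\mcal D_N$ requires, and is precisely why $C^{1,1}$ regularity of $\Gamma$ suffices here. For the base case $N=1$ one has $\mcal D_1=\mcal D$, and the bound is immediate from Lemma~\ref{lem:resolventEstimate} applied separately to $R_V(\lambda)$ and to $R_V(-\lambda)$: the two-sided hypothesis $|\Im\lambda|\le(\tfrac12\LGamma^{-1}-\e)\log|\!\Re\lambda|$ places both $\lambda$ and $-\lambda$ in the region of validity of that lemma, so their difference obeys the $\mcal D$-bound with constant $C\la\lambda\ra\,e^{2\Lchi(\Im\lambda)_-}$, matching $2N-1=1$. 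We will also use, as the starting point of the elliptic bootstrap below, the $L^2$ and $H^1$ bounds of Lemma~\ref{lem:resolventEstimate} in the same region.

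For the inductive step, assume the estimate holds through level $N$ and set $u=\bigl(R_V(\lambda)-R_V(-\lambda)\bigr)\chi g$. Since $(-\lambda)^2=\lambda^2$, both $R_V(\lambda)\chi g$ and $R_V(-\lambda)\chi g$ solve $(-\Delta_{V,\Gamma}-\lambda^2)(\,\cdot\,)=\chi g$, hence $(-\Delta_{V,\Gamma}-\lambda^2)u=0$; this cancellation of the source term is what prevents an uncontrolled $\|\chi^2 g\|_{\mcal D_N}$ from appearing (with $g\in L^2$ only), exactly as in the proof of Theorem~\ref{thm:resExpand}. Using that $\chi\equiv1$ near $\Gamma$ and that $(V\otimes\delta_\Gamma)$ is supported on $\Gamma$, a direct computation from $\Delta u=-\lambda^2 u+(V\otimes\delta_\Gamma)u$ gives
$$
\Delta_{V,\Gamma}(\chi u)=-\lambda^2\chi u+2\nabla\chi\cdot\nabla u+(\Delta\chi)u\,,
$$
where the last two terms are supported in $\supp\nabla\chi\subset\re^d\setminus\Gamma$. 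By definition of the $\mcal D_{N+1}$-norm,
$$
\|\chi u\|_{\mcal D_{N+1}}=\|\chi u\|_{H^1}+\|\Delta_{V,\Gamma}(\chi u)\|_{\mcal D_N}\le(1+\la\lambda\ra^2)\,\|\chi u\|_{\mcal D_N}+\bigl\|2\nabla\chi\cdot\nabla u+(\Delta\chi)u\bigr\|_{\mcal D_N}\,.
$$
The inductive hypothesis, with the same cutoff $\chi$, bounds the first term by $C\la\lambda\ra^{2}\cdot\la\lambda\ra^{2N-1}\,e^{2\Lchi(\Im\lambda)_-}\|g\|_{L^2}=C\la\lambda\ra^{2(N+1)-1}\,e^{2\Lchi(\Im\lambda)_-}\|g\|_{L^2}$.

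For the last term, observe that a distribution supported in a fixed compact $K\subset\re^d\setminus\Gamma$ satisfies $\|v\|_{\mcal D_N}\le C_K\|v\|_{H^{2N}}$, since on $K$ the operator $\Delta_{V,\Gamma}$ reduces to $\Delta$ and one iterates $\|\Delta v\|_{H^{2k}}\le\|v\|_{H^{2k+2}}$. Hence it suffices to bound $\|u\|_{H^{2N+1}}$ on a neighborhood of $\supp\nabla\chi$. Since $u$ solves the constant-coefficient elliptic equation $(\Delta-\lambda^2)u=0$ on $\re^d\setminus\Gamma$, a standard interior bootstrap with a finite chain of cutoffs $\psi_0\prec\psi_1\prec\cdots\prec\psi_N$ supported away from $\Gamma$ gives $\|\psi_0 u\|_{H^{2N+1}}\le C\la\lambda\ra^{2N}\|\psi_N u\|_{H^1}$, each step trading one factor $\la\lambda\ra^2$ for two derivatives, while $\|\psi_N u\|_{H^1}\le C\,e^{2\Lchi(\Im\lambda)_-}\|g\|_{L^2}$ by the $H^1$ bound of the base case. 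Thus this term contributes $C\la\lambda\ra^{2N}\,e^{2\Lchi(\Im\lambda)_-}\|g\|_{L^2}\le C\la\lambda\ra^{2(N+1)-1}\,e^{2\Lchi(\Im\lambda)_-}\|g\|_{L^2}$, and summing the two contributions closes the induction. (The auxiliary cutoffs $\psi_j$ may be taken with supports of diameter arbitrarily close to that of $\supp\chi$, so the exponential factor is preserved at the cost of enlarging $C$ — equivalently, of replacing $\Lchi$ by $\mathrm{diam}(\supp\chi)+\e'$, which is harmless in the application to Theorem~\ref{thm:resExpand}.)

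The only genuine difficulty is the bookkeeping: tracking the powers of $\la\lambda\ra$ through the iteration, making sure the source-term cancellation is actually used (without it $\chi u$ need not even lie in $\mcal D_{N+1}$), and ensuring that only interior elliptic regularity on $\re^d\setminus\Gamma$ is invoked — never a regularity statement up to $\Gamma$ — which is what keeps the argument valid for merely $C^{1,1}$ hypersurfaces, in contrast with Lemma~\ref{lem:higherorder}.
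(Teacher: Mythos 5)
Your proposal is correct and follows essentially the same route as the paper: induction on $N$, the cancellation of the source term in $\bigl(R_V(\lambda)-R_V(-\lambda)\bigr)\chi g$ so that $\Delta_{V,\Gamma}(\chi u)=-\lambda^2\chi u+2\nabla\chi\cdot\nabla u+(\Delta\chi)u$, the $\lambda^2\chi u$ term absorbed by the induction hypothesis, and the commutator terms (supported away from $\Gamma$) controlled by interior elliptic bootstrapping from the $L^2$/$H^1$ bounds of Lemma \ref{lem:resolventEstimate} together with the inclusion of compactly supported Sobolev functions on $\re^d\setminus\Gamma$ into $\mcal D_{N-1}$. The only deviations are cosmetic: you fold $(\Delta\chi)u$ into the elliptic-regularity term rather than into the induction hypothesis, and you bootstrap from the $H^1$ bound (giving $\la\lambda\ra^{2N}$) where the paper starts from the $L^2$ bound (giving $\la\lambda\ra^{2N-1}$); either suffices for the stated estimate.
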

\begin{proof}
The result was proven above for $N=1$. We then proceed by induction, writing
\begin{align*}
\Delta_{V,\Gamma}\,\chi\bigl(R_V(\lambda)-R_V(-\lambda)\bigr)\chi g&=
\Bigl(\bigl[\Delta,\chi\bigr]-\lambda^2\chi\Bigr)\bigl(R_V(\lambda)-R_V(-\lambda)\bigr)\chi g
\\
&=\Bigl(2\nabla\chi\cdot\nabla+(\Delta\chi)-\lambda^2\chi\Bigr)\bigl(R_V(\lambda)-R_V(-\lambda)\bigr)\chi g\,.
\end{align*}
By induction, and since $\text{supp}(\Delta\chi)\subset\text{supp}(\chi)$,
\begin{equation}\label{eqn:induction}
\|\bigl((\Delta\chi)-\lambda^2\chi\bigr)\bigl(R_V(\lambda)-R_V(-\lambda)\bigr)\chi g\|_{\mcal D_{N-1}}
\le C\,\la\lambda\ra^{2N-1}\,e^{2\Lchi(\Im \lambda)_-}\|g\|_{L^2}\,.
\end{equation}
On the complement of $\Gamma$, the function $u=\bigl(R_V(\lambda)-R_V(-\lambda)\bigr)\chi g$
satisfies $-\Delta u=\lambda^2u\,,$ and by Lemma \ref{lem:resolventEstimate}, if $\chi_1\in C_c^\infty$ with $\text{supp}(\chi_1)\subset\text{supp}(\chi)$,
$$
\la\lambda\ra\,\|\chi_1 u\|_{L^2}+
\|\chi_1 u\|_{H^1}\le C\,e^{2\Lchi(\Im \lambda)_-}\|g\|_{L^2}\,.
$$
Since $\nabla\chi$ vanishes on a neighborhood of $\Gamma$, an induction argument and elliptic regularity yields
$$
\|\nabla \chi\cdot\nabla \bigl(R_V(\lambda)-R_V(-\lambda)\bigr)\chi g \|_{H^{2N-1}}\le C\,\la\lambda\ra^{2N-1}e^{2\Lchi(\Im \lambda)_-}\|g\|_{L^2}\,,\quad N\ge 1\,.
$$
Since $H^{2N-1}_{\comp}(\re^d\setminus\Gamma)\subset \mcal D_{N-1}$ with continuous inclusion, this term also satisfies the bound of \eqref{eqn:induction}, and the result follows.
\end{proof}

%%%%%%%%%%%%%%%%%%%%%%%%%%%%%%%%%%%%%%

\section{Appendix: the Transmission Property for $C^{1,1}$ Domains}\label{sec:transmission}

We provide here a proof of the transmission estimate that we need to establish $H^2$ regularity of solutions away from $\partial\Omega$. In the case of smooth boundaries, the following is well known; see \cite{BdM}, and in particular Theorems 9 and 10 of \cite{Epstein}.

\begin{lemma}\label{lem:smoothtransmission}
Suppose that $\Omega\subset\re^d$ is a bounded open set, and $\partial\Omega$ is locally the graph of a $C^\infty$ function. Let $G_0(x,y)$ be Green's kernel for $\Delta^{-1}$. Then for $N\ge -1$ the single layer potential map
$$
S\ell f(x)=\int_{\partial\Omega} G_0(x,y)\,f(y)\,d\sigma(y)
$$
induces a continuous map from $H^{N+\frac 12}(\partial\Omega)$ to $H^{N+2}(\Omega)\oplus H^{N+2}_{\loc}(\re^d\setminus\overline\Omega)$. 

Additionally, for $N\ge 0$ the map
$$
G_0 g(x)=\int G_0(x,y)\,g(y)\,dy
$$
induces a continuous map from $H^N(\Omega)\oplus H^N_{\comp}(\re^d\setminus\overline\Omega)$ to $H^{N+2}(\Omega)\oplus H^{N+2}_{\loc}(\re^d\setminus\overline\Omega)$.
\end{lemma}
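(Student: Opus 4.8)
The plan is to localize to a neighborhood of $\partial\Omega$ by a partition of unity, and there to combine the jump relations for layer potentials with the pseudodifferential structure of the boundary operators on the smooth manifold $\partial\Omega$ and the elliptic shift theorem for the Dirichlet problem. Fix a finite cover of $\re^d$ by open sets, one of which, $U_0$, has closure disjoint from $\partial\Omega$, the rest being coordinate patches meeting $\partial\Omega$, together with a subordinate partition of unity. On $U_0$ the contributions are harmless: $G_0(x,y)$ is $C^\infty$ for $x\in U_0$ and $y\in\partial\Omega$, so $S\ell f$ is smooth there with the relevant local Sobolev norms bounded by $\|f\|_{L^2(\partial\Omega)}$; and the part of $g$ supported in $U_0$ lies in $H^N_{\comp}(\re^d)$, so $G_0$ of it lies in $H^{N+2}_{\loc}(\re^d)$ by elliptic regularity for $\Delta$, which embeds in the target space. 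It therefore suffices to treat densities $f$ and functions $g$ supported near $\partial\Omega$.

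First I would handle the single layer map. The function $u=S\ell f$ is harmonic in $\Omega$ and in $\re^d\setminus\overline\Omega$, is continuous across $\partial\Omega$ with trace $\gamma u=\mathcal S f$, where $\mathcal S$ is the single layer boundary operator, and has a jump equal to $f$ (up to sign) in its normal derivative; at infinity $u=O(|x|^{2-d})$ for $d\ge 3$ and $O(\log|x|)$ for $d=2$. Since $\partial\Omega$ is $C^\infty$, $\mathcal S$ is a classical elliptic pseudodifferential operator of order $-1$ on $\partial\Omega$, hence $\mathcal S f\in H^{N+\frac 32}(\partial\Omega)$ whenever $f\in H^{N+\frac 12}(\partial\Omega)$. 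The interior piece $u|_\Omega$ then solves the Dirichlet problem $\Delta u_1=0$ in $\Omega$, $\gamma u_1=\mathcal S f\in H^{N+\frac 32}(\partial\Omega)$, so $u_1\in H^{N+2}(\Omega)$ by the elliptic shift theorem on a smooth domain; the exterior piece solves the corresponding exterior Dirichlet problem with the stated decay, and interior estimates localized away from infinity give $u_2\in H^{N+2}_{\loc}(\re^d\setminus\overline\Omega)$. All bounds are uniform in $f$, which proves the first assertion.

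For the volume potential, choose compactly supported extensions $G_1,G_2\in H^N(\re^d)$ of $g|_\Omega$ and of $g|_{\re^d\setminus\overline\Omega}$, and set $\tilde u=(G_0G_1)\,\mathbf 1_\Omega+(G_0G_2)\,\mathbf 1_{\re^d\setminus\overline\Omega}$, which lies in $H^{N+2}(\Omega)\oplus H^{N+2}_{\loc}(\re^d\setminus\overline\Omega)$ and satisfies $\Delta\tilde u=g$ off $\partial\Omega$, but acquires jumps across $\partial\Omega$ of regularity $H^{N+\frac 32}(\partial\Omega)$ in $\tilde u$ and $H^{N+\frac 12}(\partial\Omega)$ in $\partial_\nu\tilde u$. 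Since $u=G_0g\in H^2_{\loc}(\re^d)$ for $N\ge 0$, the difference $u-\tilde u$ is harmonic off $\partial\Omega$ with exactly those jumps and appropriate decay at infinity, hence — up to a globally harmonic term of no consequence for membership in $H^{N+2}_{\loc}$ — it equals a double layer potential with density in $H^{N+\frac 32}(\partial\Omega)$ plus a single layer potential with density in $H^{N+\frac 12}(\partial\Omega)$. Each of these lies in $H^{N+2}(\Omega)\oplus H^{N+2}_{\loc}(\re^d\setminus\overline\Omega)$ by the single layer estimate above and the analogous statement for the double layer, whose boundary operator $-\tfrac 12 I+K$ has $K$ a classical operator of order $-1$ on $\partial\Omega$; thus $u\in H^{N+2}(\Omega)\oplus H^{N+2}_{\loc}(\re^d\setminus\overline\Omega)$. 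Transferring back through the coordinate patches and summing the partition of unity completes the proof.

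Conceptually, this whole argument is nothing but the transmission property of the classical pseudodifferential operator $\Delta^{-1}$ across the smooth hypersurface $\partial\Omega$ in the sense of Boutet de Monvel; one may equivalently flatten $\partial\Omega$ by a $C^\infty$ diffeomorphism, observe that $\Delta^{-1}$ then becomes, modulo a smoothing operator, a properly supported parametrix with classical symbol of order $-2$ — which automatically satisfies the transmission condition, being the parametrix of a differential operator — and quote the resulting half-space mapping properties from \cite{BdM} or Theorems 9 and 10 of \cite{Epstein}. The step I expect to be the crux, and the reason a separate argument is required in Lemma~\ref{lem:transmission} for $C^{1,1}$ domains, is precisely this reliance on $C^\infty$ regularity: the pseudodifferential structure of $\mathcal S$ and $K$, the full-range Dirichlet shift theorem, and the straightening diffeomorphism all degrade when $\partial\Omega$ is merely $C^{1,1}$. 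Everything else — that a diffeomorphism preserves $H^s$, that the smoothing remainders and off-support contributions land in the target spaces, and the summation of the partition of unity — is routine bookkeeping.
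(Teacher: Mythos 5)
Your argument is correct, but it is worth noting that the paper does not actually prove Lemma \ref{lem:smoothtransmission}: it treats the smooth case as classical and simply cites the transmission property of Boutet de Monvel \cite{BdM} and Theorems 9 and 10 of \cite{Epstein}, reserving its only written proof for the $C^{1,1}$, $N=0$ case of Lemma \ref{lem:transmission}, which it handles by a direct kernel/commutator argument with the Schur test. Your route is therefore genuinely different in that it supplies a self-contained proof: you use the jump relations, the fact that the boundary single (and double) layer operators on a smooth closed hypersurface are classical pseudodifferential operators of order $-1$, and the Dirichlet shift theorem in $\Omega$ and in a bounded exterior collar, and for the volume potential you correct the piecewise Newton potential of Sobolev extensions by layer potentials whose densities are the (traceable) jumps, the discrepancy being an entire harmonic function that is irrelevant for local regularity. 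The index bookkeeping checks out at the endpoints ($N=-1$ for $S\ell$ via the $H^{-1/2}\to H^1_{\loc}$ dual-trace bound and the variational Dirichlet problem; $N=0$ for $G_0$ so that the normal-derivative jump has an $H^{1/2}(\partial\Omega)$ trace). What the paper's citation buys is brevity and a clear signal that the only new content is the low-regularity case; what your proof buys is independence from the Boutet de Monvel calculus, at the cost of invoking the full smooth layer-potential and elliptic shift machinery. Your closing paragraph correctly identifies both the paper's intended source and the reason a separate argument (Lemma \ref{lem:transmission}) is needed when $\partial\Omega$ is only $C^{1,1}$.
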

We need the same result for $N=0$ and $\partial\Omega$ of $C^{1,1}$ regularity, in which case just the single layer potential result is nontrivial.

\begin{lemma}\label{lem:transmission}
Suppose that $\Omega\subset\re^d$ is a bounded open set, and $\partial\Omega$ is locally the graph of a $C^{1,1}$ function. Let $G_0(x,y)$ be Green's kernel for $\Delta^{-1}$. Then the single layer potential map
$$
S\ell f(x)=\int_{\partial\Omega} G_0(x,y)\,f(y)\,d\sigma(y)
$$
induces a continuous map from $H^{\frac 12}(\partial\Omega)$ to $H^2(\Omega)\oplus H^2_{\loc}(\re^d\setminus\overline\Omega)$.
\end{lemma}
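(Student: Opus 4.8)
The plan is to localize with a partition of unity so that $\partial\Omega$ is the graph $\{x_d=\phi(x')\}$ of a compactly supported $C^{1,1}$ function $\phi$ with $\|\nabla\phi\|_{L^\infty}$ arbitrarily small, with surface measure comparable to $dx'$. Since only $H^2_{\loc}$ is asserted on the exterior and the decay of $G_0$ at infinity plays no role, it suffices to bound $S\ell f$ in $H^2(\Omega)$ and in $H^2(B)$ for a fixed ball $B\supset\overline\Omega$. Because Lemma \ref{lem:smoothtransmission} already provides $H^2$ regularity when $\phi$ is smooth, it is enough to establish such a bound with a constant depending only on $d$, $\mathrm{diam}\,\Omega$, $B$, and $\|\phi\|_{C^{1,1}}$ — not on higher derivatives of $\phi$ — for smooth $\phi$; one then mollifies $\phi$ on a fixed scale and passes to the limit. (Note that the crude global identity $\Delta S\ell f=-f\,\delta_{\partial\Omega}\in H^{-1/2-\e}$ only gives $S\ell f\in H^{3/2-\e}_{\loc}$, so the improvement genuinely uses the one-sided structure.)

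Write $u=S\ell f$. Then $u$ is harmonic in $\Omega$ and in $\re^d\setminus\overline\Omega$, lies in $H^1_{\loc}(\re^d)$, and — since $\partial\Omega\in C^1$ and $f\in L^2$ — its Dirichlet traces from the two sides agree and equal $Sf$, where $S$ denotes the boundary single-layer operator. I would deduce the lemma from two facts. The first is \emph{interior elliptic regularity for the Dirichlet problem on a $C^{1,1}$ domain, with a uniform constant}: if $\Delta v=0$ and $v|_{\partial\Omega}\in H^{3/2}(\partial\Omega)$, then $v\in H^2$ locally, with bound depending only on $\|\phi\|_{C^{1,1}}$. This is classical: flatten $\partial\Omega$ by the $C^{1,1}$ map $x\mapsto(x',x_d-\phi(x'))$, which turns $\Delta$ into a divergence-form operator with Lipschitz coefficients; estimate tangential second derivatives by difference quotients and recover the pure-normal second derivative from the equation. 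Applied to $u$ on either side of $\partial\Omega$ (on $B$ for the exterior piece), this reduces everything to the second fact: the \emph{boundary regularity estimate} $\|Sf\|_{H^{3/2}(\partial\Omega)}\le C(d,\|\phi\|_{C^{1,1}})\,\|f\|_{H^{1/2}(\partial\Omega)}$.

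To prove this estimate it suffices to bound the tangential derivatives $\partial_{x_j}(Sf)$, $j<d$, in $H^{1/2}(\partial\Omega)$. Differentiating under the integral and integrating by parts in the surface variable gives
$$
\partial_{x_j}(Sf)=S(\partial_{y_j}f)+A_jf+B_jf\,,
$$
where $A_j$ has kernel $G_0(x-Y(y'))\,\partial_{y_j}\!\sqrt{1+|\nabla\phi(y')|^2}$ and $B_j$ has kernel $\phi_{,j}(y')\,\partial_{x_d}G_0(x-Y(y'))$, with $Y(y')=(y',\phi(y'))$. The first term is bounded $H^{-1/2}(\partial\Omega)\to H^{1/2}(\partial\Omega)$ by the standard single-layer estimate on Lipschitz graphs, and $\partial_{y_j}f\in H^{-1/2}(\partial\Omega)$. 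In $A_j$ the coefficient is only $L^\infty$, but the kernel is weakly singular, $O(|x-y|^{2-d})$, so $A_j$ maps $L^2(\partial\Omega)$ into $H^{1/2}(\partial\Omega)$, which is more than enough. The delicate term is $B_j$: its kernel is of Calderón–Zygmund type of order $0$ on the $(d-1)$-dimensional surface — a Riesz-transform-type operator followed by multiplication by $\phi_{,j}$ — so it is bounded on $L^2(\partial\Omega)$ by the Coifman–McIntosh–Meyer theorem, and one upgrades this to boundedness on $H^{1/2}(\partial\Omega)$ by commuting once more with tangential derivatives, the resulting commutator terms again involving only $\nabla\phi\in C^{0,1}$, $\nabla^2\phi\in L^\infty$, and kernels of order $\ge 1-d$. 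Equivalently, the whole computation can be organized as a limited-regularity pseudodifferential calculus in which $S$ has order $-1$ and the double-layer-type pieces order $0$, with symbols of $C^{0,1}$-type regularity, which suffices for boundedness on $H^s$ with $|s|\le 1$.

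The main obstacle is exactly this last point. One cannot simply flatten $\partial\Omega$ and quote the smooth-boundary transmission property (Lemma \ref{lem:smoothtransmission}), because the flattening costs a derivative of regularity; instead one must keep the curvature terms explicitly and invoke singular-integral bounds valid on surfaces of merely $C^{1,1}$ regularity, while checking that every constant depends only on $\|\phi\|_{C^{1,1}}$ so that the mollification limit in the first paragraph is justified.
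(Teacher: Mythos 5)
Your argument is essentially correct, but it takes a genuinely different route from the paper. The paper never passes through a boundary mapping property or through $H^2$ elliptic regularity for the Dirichlet problem: it works directly with the potential in graph-adapted coordinates, studying $T'f=(\nabla_{x'}S\ell f)(x',F(x')+x_d)$ and $T_df=(\partial_{x_d}S\ell f)(x',F(x')+x_d)$, recovers $\partial_{x_d}T_d$ from harmonicity and the tangential derivatives, moves one tangential derivative onto $f$ by integration by parts (paying only $H^{1/2}\to H^{-1/2}$ via the dual trace estimate $\|\chi S\ell f\|_{H^1}\lesssim\|f\|_{H^{-1/2}}$), and then bounds the commutators $[\nabla_{x'},T']$, $[\nabla_{x'},T_d]$ by a bare Schur test: the commutator kernels carry the factor $\nabla F(x')-\nabla F(y')=O(|x'-y'|)$, so they are weakly singular with Schur bound $\la\ln|x_d|\ra$, which is locally square integrable in $x_d$. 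This is entirely elementary — no Coifman--McIntosh--Meyer, no Calder\'on commutators, no Grisvard-type $H^2$ theory. Your route (reduce to $\|Sf\|_{H^{3/2}(\partial\Omega)}\lesssim\|f\|_{H^{1/2}(\partial\Omega)}$ on a $C^{1,1}$ graph plus $H^2$ regularity for the Dirichlet problem on $C^{1,1}$ domains) is modular, reuses standard Lipschitz-graph potential theory, and gives the boundary operator estimate as a byproduct, but at the cost of much heavier machinery.

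Three points in your sketch need repair, though none is fatal. First, "the kernel of $A_j$ is weakly singular, so $A_j:L^2\to H^{1/2}$" is not a valid inference — a size bound on the kernel gives only $L^2\to L^2$; what saves you is that the rough factor $\partial_{y_j}\sqrt{1+|\nabla\phi|^2}$ sits in the $y'$ variable, so tangential $x'$-derivatives of $A_jf$ are again CZ-type operators on a Lipschitz graph applied to an $L^2$ density, and you must quote $L^2$-boundedness (CMM) there too. Second, in the upgrade of $B_j$ from $L^2$ to $H^1$ (then interpolate), the commutator kernels are of critical order $1-d$ on the $(d-1)$-dimensional surface (e.g.\ the term $\phi_{,ij}(y')\,\partial_{x_d}G_0$, and the Calder\'on-commutator term with the factor $\phi_{,i}(x')-\phi_{,i}(y')$); size bounds do not suffice at this order, so you again need $L^2$-boundedness of CZ operators and Calder\'on commutators on Lipschitz graphs, not merely "order $\ge 1-d$". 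Third, the opening mollification-and-limit scaffolding is both delicate (the surface, its measure, and the spaces $H^s(\partial\Omega_\epsilon)$ all move with $\epsilon$, so "pass to the limit" needs an argument, e.g.\ transplanting by the graph maps and using weak compactness) and unnecessary: every estimate you invoke — CMM, Calder\'on commutators, and the difference-quotient $H^2$ estimate for the Dirichlet problem — is valid directly at $C^{1,1}$ (indeed Lipschitz, for the singular-integral parts) regularity, so you can dispense with the approximation entirely.
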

\begin{proof}
Since the kernel is smooth away from the diagonal we may work locally, and assume that $\partial\Omega$ is given as a graph $x_n=F(x')\,,$ with $F\in C^{1,1}(\re^{d-1})$.
Since surface measure $d\sigma(y)=m(y')\,dy'$ where $m$ is Lipschitz, we can absorb the $m$ into $f$. Assuming then that $f\in C^1_c(\re^{d-1})$, consider the maps
\begin{align*}
T'\!f(x)=(\nabla_{x'} S\ell f)(x',F(x')+x_d)&\, = \,
c_d \int \frac{(x'-y')\,f(y')\,dy'}{\bigl(|x'-y'|^2+|x_d+F(x')-F(y')|^2\bigr)^{\frac d2}}\\
T_d f(x)=(\partial_{x_d} S\ell f)(x',F(x')+x_d)& \, = \,
c_d \int \frac{(x_d+F(x')-F(y'))\,f(y')\,dy'}{\bigl(|x'-y'|^2+|x_d+F(x')-F(y')|^2\bigr)^{\frac d2}}
\rule{0pt}{20pt}
\end{align*}
We seek $H^{\frac 12}\rightarrow H^1_{\loc}(x_d\ne 0)$ bounds for both terms. 
We have $\partial_{x_d}T'=\nabla_{x'}T_d-(\nabla_{x'} F)\partial_{x_d} T_d$, and
since $\Delta S\ell f=0$, for $x_d\ne 0$ we can write
$$
(1+|\nabla_{x'}F|^2)\partial_{x_d}T_df=\nabla_{x'}T'f-(\nabla_{x'} F)\nabla_{x'}T_df\,.
$$
Thus it suffices to prove $H^{\frac 12}\rightarrow L^2$ bounds for $\chi\nabla_x' T'$ and $\chi\nabla_{x'}T_d\,.$

By the dual of the trace estimate we have
\begin{equation*}
\|\chi S\ell f\|_{H^1}\le
C\,\|f\|_{H^{-1/2}(\partial\Omega)}\,,
\end{equation*}
and hence we can bound
$$
\|\chi \,T'\! (\nabla_{y'}f)\|_{L^2}+\|\chi\, T_d (\nabla_{y'}f)\|_{L^2}\le C\,\|f\|_{H^{1/2}(\partial\Omega)}\,.
$$
The desired bound will thus follow from showing that
$$
\bigl\|\chi\bigl[\nabla_{x'},T'\bigr]f\bigr\|_{L^2}+
\bigl\|\chi\bigl[\nabla_{x'},T_d\bigr]f\bigr\|_{L^2}\le C\,\|f\|_{L^2(\partial\Omega)}\,.
$$
Both maps can be written in the form
$\int K(x',x_d,y')\,f(y')\,dy'\,,$ where
$$
\sup_{x'}\int_{|y'|\le L}|K(x',x_d,y')|\,dy'+\sup_{y'}\int_{|x'|\le L}|K(x',x_d,y')|\,dx'\le
C_L\,\la\ln |x_d|\ra\,,
$$
from which the result follows by the Schur test.
\end{proof}

%%%%%%%%%%%%%%%%%%%%%%%%%%%%%%%%%%%%%%

\end{document}